\newcommand{\R}{\ensuremath{\mathbb{R}}}
\newcommand{\Z}{\ensuremath{\mathbb{Z}}}
\newcommand{\C}{\ensuremath{\mathbb{C}}}
\newcommand{\N}{\ensuremath{\mathbb{N}}}
\newcommand{\Proj}{\ensuremath{\mathbb{P}}}
\newcommand{\A}{\ensuremath{\mathbb{A}}} 
\newcommand{\xin}{\ensuremath{x_1,\ldots,x_n}}
\newcommand{\vek}[1]{\mathbf{#1}}   
\newcommand{\Id}{\ensuremath{\mathscr{I}}}  
\newcommand{\Jd}{\ensuremath{\mathscr{J}}}  
\newcommand{\mc}[1]{\ensuremath{\mathcal{#1}}}   
\newcommand{\ms}[1]{\ensuremath{\mathscr{#1}}}   
\newcommand{\IT}[2]{\ensuremath{IT}_{#1}( #2 )}      
\DeclareMathOperator{\conv}{conv}
\DeclareMathOperator{\Spec}{Spec}
\DeclareMathOperator{\supp}{supp}
\newtheorem{thm}{Theorem}
\newtheorem{Snum}[thm]{Proposition}
\newtheorem{Lem}[thm]{Lemma}
\newtheorem{Kor}[thm]{Corollary}
\theoremstyle{definition}
\newtheorem{defn}[thm]{Definition}
\theoremstyle{remark}
\newtheorem{Bem}{Remark}
\newtheorem{ex}{Example}
\author{E. Faber}
\author{D. B. Westra}
\begin{document}

 \title{Blowups in tame monomial ideals}
  \date{\today}
   \thanks{E.F. is supported by the grants Austrian Science Fund FWF P-18992 and
F-443 of the University of Vienna and D.B.W. is supported by the grant IK 1008-N
of the University of Vienna. This work resulted from elaborating the diploma
thesis of the first author.}

\begin{abstract}
We study blowups of affine $n$-space with center an arbitrary monomial
ideal and call monomial ideals that render smooth blowups {\it tame ideals}.
We give a combinatorial criterion to decide whether the blowup
is smooth and apply this criterion to discuss a smoothing procedure
proposed by Rosenberg, monomial building sets and permutohedra.
\end{abstract}

\maketitle

\tableofcontents

\section{Introduction}


In this paper we consider the question what happens when we choose a
nonregular closed subscheme as center of a
blowup and in particular we pose the question when the blowup of the
affine space $\A^n_K$ in a 
nonreduced subscheme defined by a monomial ideal is a smooth scheme. 
The base field $K$ will almost exclusively be taken to have characteristic zero and moreover, we sometimes need $K$ to be algebraically closed.
By a
blowup of an algebraic variety $X$ in a closed subscheme $Z\subset X$
we mean an algebraic  
variety $\widetilde{X}$ together with a proper birational map
$\pi:\widetilde{X}\to X$ that is an isomorphism outside $Z$. The
closed subscheme $Z$ is then called the center of the blowup. When the center $Z$ is defined by an ideal $I$, the blowup in $Z$ is also called the blowup in $I$. 

We give an example of a
surface where a resolution is obtained with one blowup in a nonregular
center (see \cite{hauser}):


\begin{ex}
Let $X \subseteq \A^3_\R$  be the surface defined by $x^2-y^3z^3=0$ (see
fig. \ref{fig:limao}).  The equation defining $X$ is invariant under the permutation of $y$ and $z$.
\begin{figure}[ht]
\begin{center}
\scalebox{0.3}{\includegraphics{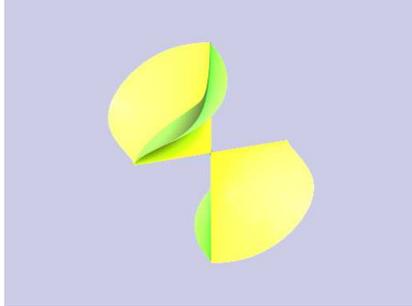}} \\
\caption{ \label{fig:limao} Image of the surface defined by $x^2-y^3z^3=0$.}
\end{center}
\end{figure}
The singular locus of $X$ is the coordinate cross $Z=V(x,yz)$.  The
traditional approach, as for example explained in 
\cite{villamayor,bierstone,BM}, is to first blow up in the origin or
in one of the axes. If we want to resolve $X$ by one blowup we have to
use a singular center.  
As one easily computes, the blowup of $\A^3_\R$ with center the
reduced ideal $(x,yz)$ is singular. Hence the blowup of $X$ is embedded in a
singular ambient scheme. One cannot speak of an ``embedded
resolution'' of $X$. However, the blowup in the nonradical ideal
$I=(x,y^2z,yz^2)(x,yz)(x,y)^2(x,z)^2$ resolves $X$ and the ambient space is
smooth. To achieve normal crossings one can modify the ideal $I$ or
perform further blowups. See \cite{Ha2} for a detailed description
of similar centers.
\end{ex}



In this paper we focus on monomial ideals in $K[x_1,\ldots,x_n]$ and
blowups of $\mathbb{A}^n$ in these ideals. In general, blowing up
$\mathbb{A}^n$ in a nonregular subscheme produces singularities. We
call a monomial ideal that renders a smooth blowup
$\widetilde{\mathbb{A}}^n$ a tame ideal.
The restriction to monomial ideals guarantees
that the blowup $\widetilde{\mathbb{A}}^n$ is a toric
variety, so that we can reduce the question of tameness to
combinatorics. The smooth affine toric varieties take a special simple
form; they are a direct product of an affine space and a
torus. Hence monomial ideals provide
a convenient testing ground to study blowups in nonregular
subschemes.

Theorem \ref{smoothness} gives a necessary and sufficient condition for a
monomial ideal to be tame; the criterion uses the structure of the
Newton polyhedron associated to the monomial ideal. We discuss several
criteria for blowups in products of monomial ideals to be tame.
We apply these criteria to three known constructions: $(i)$ Rosenberg
smoothing, $(ii)$ blowing up in building sets of arrangements of
linear subspaces and $(iii)$ permutohedral blowups, that is, blowups
in monomial centers that are invariant under any permutation of the
coordinates.

Rosenberg \cite{Ro} considers monomial ideals whose zero sets are unions of coordinate axes. These ideals are not tame. In \cite{Ro} two constructions are given to modify such an ideal $I$ such that the zero set is unchanged and the modified ideal is tame. The two constructions are intersecting and multiplying the ideal $I$ with a suitably chosen ideal.

A set of subspaces of a
vector space $V$ that is closed under taking sums is called an arrangement of linear subspaces in $V$. A building set of
an arrangement is a subset of the arrangement such that any element
$U$ of the arrangement can be written as a direct sum of elements of
the building set that are maximal with respect to inclusion in $U$.  
De Concini and Procesi \cite{DP} have shown that one can construct so-called
wonderful models of subspace arrangements by a sequence of blowups
with the elements of a building set as 
centers. MacPherson and Procesi \cite{MP} and Li \cite{LiLi}
generalized the construction of De Concini and Procesi to wonderful conical
compactifications and to arrangements of subvarieties of a smooth variety,
respectively. We consider the case
of linear subspaces that are defined by monomial coordinate ideals and
provide an elementary proof of the result of De Concini and Procesi in
this particular setting. 

The group $S_n$ acts in a natural way on $\mathbb{A}^n$ by permuting
the coordinates. We consider subschemes that are invariant under $S_n$
and can be written as unions of coordinate subspaces. In general, such
subschemes are nonregular. We show that blowing up $\mathbb{A}^n$ in a
certain class of monomial ideals that are invariant under $S_n$ and
whose zeroset is a union of coordinate subspaces results in a smooth
toric variety $\widetilde{\mathbb{A}}^n$. This class of ideals is
related to permutohedra (see e.g. \cite{Post}) and therefore these
ideals are called permutohedral ideals.

The outline of the paper is as follows. In Section \ref{SECsmooth} we
give a brief 
overview of blowups with the focus on monomial ideals, mainly to fix the
notation and the setting. In Section \ref{SECcriterion} 
we state and prove the smoothness criterion, which is then applied to discuss
the Rosenberg smoothing procedure in Section \ref{SECrosenberg}. Criteria for the tameness of products of
monomial ideals and in particular of products of coordinate ideals is studied in
Section \ref{SECproducts}.  We apply these criteria to building sets in Section
\ref{SECbuilding} and to permutohedral ideals in Section \ref{SECpermutation}.
\vskip 0.5cm
{\bf Acknowledgments.} The authors wish to thank Herwig Hauser for supervising
the diploma thesis of E.F. and many helpful discussions; Li Li, Claudio 
Procesi and Bernd Sturmfels for valuable discussions; Roc\'io Blanco, Clemens Bruschek,
Alexandra Fritz, Martin Fuchs and Peter Schodl for discussions and
proof reading.


\section{Blowups of $\A^n$ in monomial ideals} \label{SECsmooth}


We work with a smooth affine scheme $W \cong \A^n_K
\cong\Spec(K[\xin])$ over a field $K$. When the
field $K$ is irrelevant, we write $\A^n$ instead of
$\A^n_K$. For an ideal $I$ we write $V(I)$ for its zero set. To keep
the notation simple we write 
$K[{\bf x}]$ occasionally for $K[\xin]$ and for ${\bf a}=(a_1,\ldots,a_n)\in
\Z^n$ we use ${\bf x}^{\bf a}=x_{1}^{a_1}\cdots x_{n}^{a_n}$. Furthermore, the
standard basis vectors in $\R^n$ are denoted ${\bf e}_1,\ldots,{\bf
  e}_n$. Below we shortly discuss the construction of the blowup of
$\A^n$ to fix the notation and to provide the setting in which we
work. We also give some definitions that will be used frequently. For
more complete discussions of and introductions to blowups 
and resolution of singularities we refer to \cite{cut,Ha4,kollar,Li}.

A blowup of $\A^n$ is a scheme $\widetilde \A^n$ together with a
projection $\pi:  \widetilde \A^n \to \A^n$, the associated
blowup map. Each blowup is completely determined by a closed
subscheme $Z\subset \A^n$, which is called the center
of the blowup. Equivalently, the ideal $I$ that defines $Z$ determines the blowup completely.
The center $Z$ is the locus of points above which $\pi$ is not an isomorphism.
We also say that we blow up the associated coordinate ring $K[\xin]$ with center
$I$. The map $\pi: \widetilde \A^n \to \A^n$ is constructed as follows:
since $\A^n$ is Noetherian, $I$ is finitely generated, say, $I=(g_1,
\ldots, g_k) \subseteq K[\xin]$. The blowup
$\widetilde \A^n$ of $\A^n$ with center $Z=V(I)$ is defined as the
Zariski-closure of the graph of the map
\begin{equation}
\sigma: \A^n \backslash Z \to \Proj^{k-1}\,, ~~~p \mapsto (g_1(p):
\ldots: g_k(p))\,.
\end{equation}
Thus the blowup $\widetilde \A^n$ lives in $ \A^n \times
\Proj^{k-1}$. The projection $\pi: \widetilde \A^n \to \A^n$
on the first factor is the blowup map, and
the preimage $\pi^{-1}(Z)$ of the center $Z$ is called the exceptional
locus. The blowup $\widetilde \A^n$
can be covered by $k$ affine charts, each corresponding to a generator
$g_i$ of $I$. The coordinate ring of the
$i$th chart is
\begin{equation}
K[\xin, \frac{g_1}{g_i}, \ldots, \frac{g_k}{g_i}]\,.
\end{equation}

A smooth center defined by monomials is a coordinate subspace
$Z=V(x_1, \ldots, x_k)\subset \A^n$. The coordinate ring of the $i$th
affine chart of the blowup then reduces to
\begin{equation}
K[\frac{x_1}{x_i}, \ldots,
\frac{x_{i-1}}{x_i}, x_i, \frac{x_{i+1}}{x_i}, \ldots,
\frac{x_k}{x_i}, x_{k+1},\ldots, x_n]\,.
\end{equation}
The associated blowup map $\pi_{x_i}: \widetilde \A^n \to \A^n$ is
given in the $i$th chart by
\begin{equation}
x_j \mapsto \left\{ \begin{array}{l@{\quad}l} x_i x_j\,,& \text{ if }~ j
    \neq i ~\text{ and }~ j \leq k\,, \\  x_j\,, & \text{ otherwise. } \\
  \end{array}\right.
\end{equation}

More generally, let $\Id$ be any monomial ideal in $K[\xin]$
and let $\pi: \widetilde \A^n \to \A^n$ be the blowup of $\A^n$ with
center $\Id$. Then $\widetilde \A^n$ is a toric variety, which is not
necessarily normal and could be singular. Let $\{\vek{x}^\vek{a}:
\vek{a} \in A \}$, with $A$ a finite set in $\N^n$, be a set of
generators of $\Id$. The blowup $\widetilde \A^n$ of the affine space
will be covered with the affine toric varieties $U_\vek{a}$ given by
\begin{equation}
U_\vek{a}:= \Spec(K[\xin][\vek{x}^{\vek{a}'-\vek{a}}, \vek{a}' \in A
\backslash \vek{a}])\,.
\end{equation}
If $U_\vek{a}$ is smooth and $K=\C$ then $U_\vek{a}$ is isomorphic to
$\C^{n-k}\times (\C^*)^k$ for some $k$, see for example \cite[chapter
2]{Fu}, \cite{Ew}. We call $U_\vek{a}$ the $\vek{a}$-chart of
$\widetilde \A^n$.



\vskip 0.4cm

Using monomial ideals has the advantage that we can apply techniques from convex
geometry and toric geometry and that we can use combinatorial arguments. In the
paragraphs below we recall some basic notions of convex geometry in order to
introduce the notion of the ideal tangent cone and discuss some of its
properties. Most of our definitions can be found in standard
text books such as \cite{Fu,Ew,Schr,Od,St}.

We call a subset $C$ of $\R^n$ a polyhedral cone if there exist
vectors $\vek{v}_1, \ldots, \vek{v}_l \in \R^n$, such that $C= \{
\sum_{i=1}^l \lambda_i\vek{v}_i: \lambda_1, \dots, \lambda_l \in \R_+
\}$. We then say that $C$ is generated by $\vek{v}_1, \ldots,
\vek{v}_l$ and  write $C= {}_{\R_{+}}\langle \vek{v}_1, \ldots,
\vek{v}_l \rangle$. For the cone generated by the standard basis
vectors of $\R^n$ we write $\R^n_+$. A polyhedral cone is called
  rational if one can find generators $\vek{v}_i$ with integer
coordinates. We then call a generator $\vek{v}_i$ in $\Z^n$ primitive
if its coordinates are relatively prime. For any cone $C$ we define
the associated lattice cone to be $C \cap \Z^n$. If $C$ is a rational
polyhedral cone then $C \cap \Z^n$ is finitely generated over $\Z$ by
Gordan's lemma (see for example \cite{Fu}).
For any subset $N$ of $\R^n$ we write $\conv(N)$ for the convex hull
of $N$. The positive convex hull of a subset $M$ of $\R^n$ is the
convex hull of the Minkowski sum of $M$ with $\R_+^n$, written
$\conv(M+\R^n_+)$. A polytope is the convex hull of a finite set of points. The
Minkowski sum of two sets $P$ and $Q$ is the set consisting of all points $p+q$
where $p$ and $q$ run over $P$ and $Q$, respectively. A polyhedron is the
Minkowski sum of a polytope and a polyhedral cone. 

\begin{Bem} \label{Rmk:summe}
We make an observation that will be used frequently in the sequel. Let
${\bf a}$ and ${\bf b}$ be two different
points in the intersection of two polyhedra $P$ and $Q$. Then ${\bf
a}+{\bf b}$ is not a vertex of $P+Q$. Indeed, $2{\bf a}$ and $2{\bf
b}$ are in $P+Q$ and ${\bf a}+{\bf b} = \tfrac{1}{2}(2{\bf
a})+\tfrac{1}{2} (2{\bf b})$.
\end{Bem}

\begin{defn}
Let $P=\conv(M) +{}_{\R_+}\!\langle \vek{v}_1, \ldots, \vek{v}_l
\rangle$ with $M \subseteq \R^n$ finite, be a unbounded polyhedron.
If for a vertex $\vek{m}$ of $\conv(M)$ the ray $\vek{m}+ \R_+
\vek{v}_i$ is an edge of $P$   we call $\vek{m}+ \R_+ \vek{v}_i$ the
 infinitely far vertex of $P$ in direction $\vek{v}_i$.
\end{defn}

The support of a monomial ideal $\Id$ is the set of all exponent
vectors of monomials in $\Id$,
\begin{equation}
 \supp( \Id)= \{ \vek{a} \in \N^n: \vek{x}^\vek{a} \in \Id \}\,.
\end{equation}
Its convex hull $\conv( \supp(\Id))$ is called the Newton polyhedron
of $\Id$, which is denoted by $N(\Id)$. Equivalently, for a
finite set of generators $\vek{x}^\vek{a}$, $\vek{a} \in A$, of $\Id$
the Newton polyhedron is defined as the positive convex hull $\conv(A +
\R^n_+)$. It is important to note that $N(\Id)\cap \Z^n\supset \supp
(\Id)$ but that the inclusion can be proper. For example, the ideals
$(x,y)^2$ and $(x^2,y^2)$ in $K[x,y]$ have the same Newton polyhedron,
but not the same support.

\begin{Bem}
A monomial ideal $I$ is integrally closed if and only if all lattice
points of the Newton polyhedron of $I$ are in the support of
$I$, see for example \cite{eisenbud,teissier88}. It is a well-known
theorem from Zariski \cite{ZariskiSamuel} that any integrally closed ideal is
complete. Furthermore, Zariski proves that complete ideals in $K[x,y]$
can be written as a product of simple complete ideals, which then
correspond to the exceptional divisors of blowing up in the simple
complete ideals; see for example
\cite{ZariskiSamuel,cutkosky1,cutkosky2,cutkosky3,spivakovsky} for
more on complete ideals and a full list of references.
\end{Bem}

The following definition is of crucial importance for the sequel:

\begin{defn}
For a set of vectors $\vek{v}_1, \ldots, \vek{v}_l \in \R^n$ the set
$\{ \lambda_1 \vek{v}_1 + \ldots + \lambda_l \vek{v}_l: \lambda_i \in
\N \}= {}_\mathbb{N}\langle \vek{v}_i: 1\leq i \leq l \rangle$ is
called the $\N$-span of the $\vek{v}_i$. For two $\N$-spans $C_1={}_\N\langle
\vek{v}_1,\ldots,\vek{v}_s\rangle$ and $C_2={}_\N\langle
\vek{w}_1,\ldots,\vek{w}_r\rangle$ we define the sum $C_1+C_2$ to be
the $\N$-span ${}_\N\langle \vek{v}_1,\ldots,\vek{a}_s,
\vek{w}_1,\ldots,\vek{w}_r\rangle$; any element of $C_1+C_2$ is a sum
$x+y$ with $x\in C_1$ and $y\in C_2$.
We write $\Sigma_n$ for ${}_\N\langle \vek{e}_1,\ldots,{\bf e}_n\rangle$. For a
monomial ideal $\Id= (
\vek{x}^\vek{a}: \vek{a} \in A )$, $A \subseteq \N^n$ we define
\begin{equation}
\IT{\vek{a}}{\Id}:={}_\mathbb{N}\langle  \vek{a}'- \vek{a}: \vek{a}'
\neq \vek{a} \in A \rangle + \Sigma_n \,,
\end{equation}
the ideal tangent cone of the monomial $\vek{x}^\vek{a}$. We call
$\IT{\vek{a}}{\Id}$ pointed if $\IT{\vek{a}}{\Id}
\cap (-\IT{\vek{a}}{\Id})= \{0 \}$. A minimal set of generators
of $\IT{\vek{a}}{\Id}$ is a finite set $S$ of vectors in $IT_{{\bf a}}(\Id)$
that generate $\IT{\vek{a}}{\Id}$ and no element in $S$ is an
$\N$-linear combination of the other elements in $S$. We call an element of
the minimal generating system a minimal generator of
$\IT{\vek{a}}{\Id}$.
\end{defn}

We remark that the ideal tangent cone is in general not a polyhedral
lattice cone. For a point ${\bf a}$ in the support of $\Id$, we define the real
tangent cone in ${\bf a}$ to be the polyhedral cone $T_{{\bf a}}(\Id)$ generated
by all vectors ${\bf p} - {\bf a}$ where ${\bf p}$ lies in $N(\Id)$. The real
tangent cone $T_{{\bf a}}(\Id)\cap \Z^n$ contains $IT_{{\bf a}}(\Id)$, and the
inclusion can be proper.

\begin{Lem}
Let ${}_\N\langle\vek{v}_1, \ldots, \vek{v}_l \rangle= \IT{\vek{a}}{\Id}$ be
the ideal tangent cone of $N$ in a point $\vek{a}$. If the ideal
tangent cone is pointed it has a unique minimal set of generators.
\end{Lem}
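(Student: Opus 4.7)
The plan is to show that the \emph{atoms} of $\IT{\vek{a}}{\Id}$---those nonzero elements $\vek{v}$ that cannot be written as $\vek{v}=\vek{x}+\vek{y}$ with both $\vek{x},\vek{y}\in\IT{\vek{a}}{\Id}\setminus\{\vek{0}\}$---form the unique minimal generating set. As a first step, I would check that every minimal generating set $S$ contains every atom. If $\vek{v}$ is an atom and $\vek{v}=\sum\lambda_i \vek{w}_i$ with $\vek{w}_i\in S$ and $\lambda_i\in\N$, the total $\sum\lambda_i$ must equal $1$: otherwise, picking any $j$ with $\lambda_j\geq 1$ gives $\vek{v}=\vek{w}_j+(\vek{v}-\vek{w}_j)$ with both summands nonzero in the monoid, contradicting the atom condition. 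Hence $\vek{v}=\vek{w}_j\in S$.

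The core of the proof is to show that the atoms already generate $\IT{\vek{a}}{\Id}$, which I would establish by Noetherian induction. By definition $\IT{\vek{a}}{\Id}$ is the $\N$-span of the finite set of nonzero vectors $\vek{v}_1,\ldots,\vek{v}_l$ appearing in the statement. Pointedness forces $\vek{0}\notin\conv\{\vek{v}_1,\ldots,\vek{v}_l\}$: otherwise a rational convex combination summing to $\vek{0}$ could be cleared of denominators to give $\sum n_i\vek{v}_i=\vek{0}$ with $n_i\in\N$ not all zero, and for any $j$ with $n_j>0$ both $n_j\vek{v}_j$ and its negative $\sum_{i\neq j}n_i\vek{v}_i$ would lie in $\IT{\vek{a}}{\Id}$, forcing $\vek{v}_j=\vek{0}$---a contradiction. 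A separating hyperplane for $\vek{0}$ and $\conv\{\vek{v}_i\}$ then yields a linear functional $\phi$ with $\phi(\vek{v}_i)>0$ for every $i$, whence $\phi(\vek{v})\geq c:=\min_i\phi(\vek{v}_i)>0$ for every nonzero $\vek{v}\in\IT{\vek{a}}{\Id}$. Consequently the order $\vek{x}<\vek{y}\Leftrightarrow \vek{y}-\vek{x}\in\IT{\vek{a}}{\Id}\setminus\{\vek{0}\}$ is Noetherian, and induction on it expresses every $\vek{v}\in\IT{\vek{a}}{\Id}$ as a sum of atoms: if $\vek{v}$ is neither $\vek{0}$ nor an atom, split $\vek{v}=\vek{x}+\vek{y}$ with both summands strictly smaller and invoke the inductive hypothesis.

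Combining the two steps, the set of atoms lies inside every minimal generating set, generates $\IT{\vek{a}}{\Id}$, and contains no $\N$-relation among its own elements (by the very definition of atom). Hence it is itself a minimal generating set and must coincide with any other. The main technical point is extracting the separating functional $\phi$ from pointedness of the $\N$-span---rather than from pointedness of the real cone it spans---after which the rest of the argument is a direct unwinding of the definitions.
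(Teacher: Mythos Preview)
Your proof is correct and takes a genuinely different route from the paper's. The paper argues directly by contradiction: given two minimal generating sets $\{\vek{v}_i\}$ and $\{\vek{w}_j\}$ with some $\vek{v}_1\notin\{\vek{w}_j\}$, it substitutes $\vek{w}_j=\sum_i\alpha_{ji}\vek{v}_i$ into $\vek{v}_1=\sum_j\beta_j\vek{w}_j$, and then uses pointedness and minimality to force the coefficient of $\vek{v}_1$ to be exactly $1$, whence $\vek{v}_1=\vek{w}_k$ for some $k$. This is shorter and more elementary---no convex geometry or Noetherian induction is needed. Your approach, by contrast, actually \emph{identifies} the unique minimal generating set as the set of atoms of the monoid, which is a sharper conclusion and generalizes cleanly to any finitely generated pointed commutative monoid admitting a positive grading. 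The price is the extra step of extracting a strictly positive linear functional; your argument that $\vek{0}\notin\conv\{\vek{v}_i\}$ via a rational relation is fine here because the $\vek{v}_i$ lie in $\Z^n$ (so the relation polytope is rational and has a rational point if nonempty), though it would be worth saying this explicitly.
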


\begin{proof}
Assume $\IT{\vek{a}}{\Id}$ has two different minimal sets of
generators $\vek{v}_1, \ldots, \vek{v}_l$ and $\vek{w}_1, \ldots,
\vek{w}_m$, and assume $\vek{v}_1 \not \in \{ \vek{w}_1, \ldots,
\vek{w}_m \}$. Then there are $\alpha_{ji}\in \N$ and $\beta_i\in
\N$ such that
\begin{equation}
\vek{w}_j = \sum_i \alpha_{ji}\vek{v}_i\,,\quad \vek{v}_1 =
\sum_j\beta_j\vek{w}_j\,.
\end{equation}
Hence we have $\vek{v}_1= \sum_{i,j}\beta_j \alpha_{ji}\vek{v}_i$
The coefficient $\sum_j \alpha_{j1} \beta_j$ must be greater or equal
$1$ because otherwise $\vek{v}_1 \in {}_\N \langle\vek{v}_2, \ldots,
  \vek{v}_l \rangle$, which contradicts that $\vek{v}_1, \ldots,
\vek{v}_l$ form a minimal system of generators. If $\sum_j \alpha_{j1}
\beta_j > 1$, $0$ would be a nontrivial $\N$-linear combination of
the $\vek{v}_i$, contradicting $\IT{\vek{a}}{\Id}$ to be
pointed. Hence the only possibility is $\alpha_{1k}=1$ and $\beta_k=1$
for some $k$ and zero otherwise. But then $ \vek{v}_1= \vek{w}_k$,
which contradicts that $\vek{v}_1\notin \left\{ \vek{w}_1, \ldots,
\vek{w}_m \right\}$.
\end{proof}

\begin{Lem}
Let $N$ be the Newton polyhedron of a monomial ideal $\ms{I}$ in
$K[x_1,\ldots, x_n]$ and ${\bf a}$ be in $\supp(\Id)$. Then the ideal
tangent cone $IT_{{\bf a}}(\ms{I})$ is pointed if and only if ${\bf
a}$ is a vertex.
\end{Lem}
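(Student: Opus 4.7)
The lemma splits into two implications, and I would prove them using two standard inputs: a convex cone is pointed iff some linear functional is strictly positive on it away from the origin, and the vertices of $N(\Id)=\conv(A+\R^n_+)$ are exactly the componentwise-minimal elements of the generating set $A$ (equivalently, of $\supp(\Id)$). The latter is a short convex-geometry fact: if $\vek{a}'<\vek{a}$ componentwise with both in $\supp(\Id)$, then $\vek{a}=\tfrac12\vek{a}'+\tfrac12(2\vek{a}-\vek{a}')$ with both summands in $N$ and distinct from $\vek{a}$ (in the spirit of Remark~\ref{Rmk:summe}), so $\vek{a}$ is not extreme; conversely, componentwise minimality forces $\vek{a}$ to be extreme in $N$.

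For the direction ``$\vek{a}$ is a vertex $\Rightarrow \IT{\vek{a}}{\Id}$ is pointed'', the vertex property supplies a linear functional $\ell\colon\R^n\to\R$ with $\ell(\vek{a})<\ell(y)$ for every $y\in N\setminus\{\vek{a}\}$. I would apply $\ell$ to the two families of generators of $\IT{\vek{a}}{\Id}$: $\vek{a}+\vek{e}_j\in N\setminus\{\vek{a}\}$ yields $\ell(\vek{e}_j)>0$, and every $\vek{a}'\in A\setminus\{\vek{a}\}$ lies in $N\setminus\{\vek{a}\}$, so $\ell(\vek{a}'-\vek{a})>0$. Hence $\ell$ is strictly positive on every generator and therefore on every nonzero $\N$-linear combination of them (a sum with some positive coefficient has strictly positive $\ell$-value, and a sum with all zero coefficients equals $0$), which gives $\IT{\vek{a}}{\Id}\cap(-\IT{\vek{a}}{\Id})=\{0\}$.

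For the converse I would argue by contrapositive and exhibit an explicit line inside $\IT{\vek{a}}{\Id}$. If $\vek{a}\in\supp(\Id)$ is not a vertex of $N$, then by the identification above there exists $\vek{a}'\in A$ with $\vek{a}'\leq\vek{a}$ componentwise and $\vek{a}'\neq\vek{a}$. Set $\vek{r}:=\vek{a}-\vek{a}'\in\N^n\setminus\{0\}$. Then $\vek{r}=\sum_j r_j\vek{e}_j\in\Sigma_n\subseteq\IT{\vek{a}}{\Id}$, while $-\vek{r}=\vek{a}'-\vek{a}$ is a generator of $\IT{\vek{a}}{\Id}$ by definition. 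Both $\pm\vek{r}$ thus lie in $\IT{\vek{a}}{\Id}$ with $\vek{r}\neq 0$, so $\IT{\vek{a}}{\Id}$ contains a line and is not pointed.

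The step requiring the most attention is the identification of the vertices of $N(\Id)$ with the componentwise-minimal elements of $A$; once that is settled (and one checks the statement makes sense for $\vek{a}\in\supp(\Id)$ possibly outside a fixed minimal generating set, by adjoining $\vek{a}$ if necessary, which adds no new cone generator relevant to the argument), both implications reduce to essentially a one-line computation.
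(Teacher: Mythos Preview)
Your forward implication (vertex $\Rightarrow$ pointed) is correct and cleaner than the paper's: using a strict supporting functional $\ell$ is the natural way to see that no nonzero $\N$-combination of generators can be zero. The paper instead argues by contrapositive, turning a hypothetical relation $0=\sum_i\nu_i(\vek{b}_i-\vek{a})$ into a convex representation of $\vek{a}$.

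The backward implication, however, rests on a false lemma. You assert that the vertices of $N(\Id)=\conv(A+\R^n_+)$ coincide with the componentwise-minimal elements of $A$ (or of $\supp(\Id)$). Only one inclusion holds: every vertex is componentwise minimal, as your midpoint argument shows. The converse fails already for $\Id=(x,y)^2$ in $K[x,y]$, with minimal generating set $A=\{(2,0),(1,1),(0,2)\}$. The point $\vek{a}=(1,1)$ is componentwise minimal in $A$ and in $\supp(\Id)$ (nothing in $\supp(\Id)$ lies strictly below it), yet $(1,1)=\tfrac12(2,0)+\tfrac12(0,2)$ is not a vertex of $N(\Id)$. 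Your argument produces no $\vek{a}'\le\vek{a}$ here, so it cannot exhibit the line in $\IT{\vek{a}}{\Id}$; nevertheless that cone contains $\pm(1,-1)$ and is indeed not pointed.

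The fix is to drop the componentwise-order crutch and work directly with convexity, as the paper does: if $\vek{a}$ is not a vertex, write $\vek{a}$ as a rational convex combination of other lattice points of $N$ (vertices together with translates $\vek{a}+\vek{e}_j$ to absorb the recession cone), clear denominators, and obtain a nontrivial $\N$-relation $\sum_i\mu_i(\vek{b}_i-\vek{a})=0$ among generators of $\IT{\vek{a}}{\Id}$. That relation immediately gives $\IT{\vek{a}}{\Id}\cap(-\IT{\vek{a}}{\Id})\neq\{0\}$.
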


\begin{proof}
First assume ${\bf a}$ is a vertex.
Suppose $IT_{{\bf a}}(\ms{I}) = {}_{\N}\langle {\bf
  a}_1,\ldots,{\bf a}_m \rangle$, with ${\bf a}_i \in
\Z^n$. The vectors ${\bf a}_i$ are vectors of the form ${\bf
  b}_i - {\bf a}$ where the ${\bf b}_i$ are some (rather special)
points of $N$, not equal to ${\bf a}$ and the standard basis vectors
$\vek{e}_1, \ldots, \vek{e}_n$. If
$IT_{{\bf a}}(\ms{I})$ is not pointed, then we can write $0= \sum_i
\nu_i{\bf a}_i$ with $\nu_i\in \N$
not all zero. But then ${\bf a} = \sum_i \mu_i {\bf b}_i$ with $\mu_i
= \nu_i / \sum_i \nu_i$ is a convex linear combination of other
points in $N$ and ${\bf a}$ is not a vertex.

Conversely, if ${\bf a}$ is not a vertex, then ${\bf a}$ is a
$\mathbb{Q}$-linear combination of some vertices ${\bf b}_i$ of $N$:
${\bf a}=\sum_i \lambda_i{\bf b}_i$ with $\lambda_i\in\mathbb{Q}$,
positive and
$\sum_i\lambda_i =1$. Multiplying with a common denominator $d\in\N$
we get: $d{\bf a} - \sum_i \mu_i {\bf b}_i=0$ and $\sum_i
\mu_i=d$. The ${\bf a}-{\bf b}_i$ generate the tangent cone and
$\sum_i \mu_i({\bf a} - \mu_i {\bf b}_i)$ is a nontrivial $\N$-linear
combination representing zero and thus $IT_{{\bf
a}}(\ms{I})$ is not pointed.
\end{proof}

By definition of the support of an ideal $\Id=({\bf x}^{\bf a}:{\bf
  a}\in A)$, $\supp(\Id)$ consists
of all ${\bf c}\in\mathbb{Z}^n$ such that ${\bf x}^{\bf c}\in
\Id$. But then ${\bf c}$ must be of the form ${\bf a} + m_1{\bf e}_1+
\ldots + m_n {\bf e}_n$ for some ${\bf a}\in A$ and
some nonnegative integers $m_1,\ldots,m_n$.
It follows that for a fixed ${\bf b}\in A$ we have
\begin{equation}
{}_{\mathbb{N}} \langle {\bf a} - {\bf b} : {\bf a}\in \supp(\Id) \rangle =
{}_{\mathbb{N}} \langle {\bf a}' - {\bf b} : {\bf a}'\in A \rangle +
\Sigma_n\,.
\end{equation}

\begin{defn}
Let $\Id \subset K[\xin]$ be a monomial ideal. We call a pointed ideal
tangent cone $\IT{\vek{a}}{\Id}$
simplicial if the minimal system of generators of $\IT{\vek{a}}{\Id}$
consists of exactly $n$ vectors.
\end{defn}

Since the standard basis vectors are contained in each ideal tangent
cone, the following lemma is obvious:

\begin{Lem}
Let $\Id=(\vek{x}^\vek{a}: \vek{a} \in A)$ be an arbitrary monomial
ideal and $N=N(\Id)$ the associated Newton polyhedron. If the ideal
tangent cone of an $\vek{a} \in N$ is simplicial then the set of
minimal generators form a basis of $\Z^n$.
\end{Lem}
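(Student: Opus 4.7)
The plan is to exploit the fact, built into the very definition of $\IT{\vek{a}}{\Id}$, that the standard basis vectors $\vek{e}_1, \ldots, \vek{e}_n$ always lie in the ideal tangent cone (because $\Sigma_n$ is added on). So if the cone is simplicial with minimal generators $\vek{v}_1, \ldots, \vek{v}_n$, I expect these $n$ vectors to already $\Z$-span $\Z^n$, whereupon a rank count finishes the argument.

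Concretely, I would first write each $\vek{e}_i$ as an $\N$-linear combination $\vek{e}_i = \sum_j c_{ij}\vek{v}_j$ with $c_{ij} \in \N$, which is possible because $\vek{e}_i \in \IT{\vek{a}}{\Id} = {}_\N\langle \vek{v}_1, \ldots, \vek{v}_n \rangle$. This immediately yields $\Z^n = \Z\vek{e}_1 + \cdots + \Z\vek{e}_n \subseteq \Z\vek{v}_1 + \cdots + \Z\vek{v}_n$, and the reverse inclusion is trivial since each $\vek{v}_j$ lies in $\Z^n$; hence the $\vek{v}_j$ $\Z$-generate $\Z^n$.

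To conclude, I would apply the standard fact that a generating set of size $n$ of the free $\Z$-module $\Z^n$ must be a $\Z$-basis. A clean way is to observe that the integer matrix $M$ with columns $\vek{v}_j$ admits an integer inverse whose $(i,j)$-entry is the nonnegative integer $c_{ij}$; hence $\det M \cdot \det M^{-1} = 1$ forces $\det M = \pm 1$, which makes $\{\vek{v}_j\}$ a unimodular $\Z$-basis. There is really no serious obstacle here: as the paper itself remarks, the statement is essentially tautological once one notices that the standard basis is effectively ``hidden'' inside any minimal system of exactly $n$ generators. The only thing to be mildly careful about is that simplicial includes the pointedness hypothesis, so that the uniqueness of the minimal generating system from the earlier lemma is available and the $\vek{v}_j$ are unambiguously defined.
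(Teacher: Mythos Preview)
Your proposal is correct and follows exactly the reasoning the paper indicates: the paper does not give a proof at all but simply declares the lemma ``obvious'' because the standard basis vectors are contained in every ideal tangent cone. Your argument is a clean unpacking of that remark, and the minor indexing in the matrix inverse (whether the $(i,j)$-entry of $M^{-1}$ is $c_{ij}$ or $c_{ji}$) does not affect the conclusion that $\det M = \pm 1$.
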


Let $\IT{\vek{a}}{\Id}= {}_\N\langle \vek{v}_1, \ldots, \vek{v}_l \rangle$
with $l \geq n$ and let  $K[\IT{\vek{a}}{\Id}]=K[\vek{x}^{\vek{v}_1},
\ldots, \vek{x}^{\vek{v}_l}]$ be the monomial algebra generated by
$\IT{\vek{a}}{\Id}$. It can be seen easily that
$K[\IT{\vek{a}}{\Id}]$ is the coordinate ring of the $\vek{a}$-chart
$U_\vek{a}$ of $\widetilde \A^n$: The exponents $\vek{a}'- \vek{a}$,
with $\vek{a}' \in A \backslash \vek{a}$, of the generators ${\bf
  x}^{{\bf a}'-{\bf a}}$ of the coordinate ring $K[{\bf x},
\vek{x}^{\vek{a}'- \vek{a}}: \vek{a}' \in A \backslash \vek{a}]$ of
the $\vek{a}$-chart of the blowup correspond in the Newton polyhedron $N$ to
the vectors pointing from $\vek{a}$ to $\vek{a}'$. The generators $\xin$
of $K[{\bf x}, \vek{x}^{\vek{a}'- \vek{a}}: \vek{a}' \in A \backslash
\vek{a}]$ correspond in the Newton polyhedron $N$ to the $n$
standard basis vectors. These generators make up the
infinitely far vertices $\vek{a} + \R_+ \vek{e}_i$. We have the inclusion
$K[\IT{\vek{a}}{\Id}] \subseteq K[\vek{x}, \vek{x}^{-1}]$. Hence $U_{{\bf a}}$
contains a torus $(K^*)^n$ as a dense subset.

Now consider the monoid homomorphism
$\pi: \N^l \to \Z^n$, $\vek{b} \mapsto \sum_{i=1}^l b_i
\vek{v}_i$. The image of $\pi$ is $\IT{\vek{a}}{\Id}$ and we get a
homomorphism of monomial algebras
\begin{equation}
\hat \pi: K[t_1, \ldots, t_l] \to K[\vek{x},\vek{x}^{-1}]\,, \quad t_i \mapsto
\vek{x}^{\vek{v}_i}\,.
\end{equation}
This construction yields the explicit description of the
$\vek{a}$-chart of $\widetilde \A^n$ as a toric variety. The following
lemma can be found in standard textbooks, like \cite{Fu, Ew}.

\begin{Lem}\label{lem.Ewald}
Let  ${\bf
  v}_1,\ldots , {\bf v}_l$ in $\Z^n$. The kernel of the map
$K[t_1,\ldots,t_l]\to K[{\bf x}^{{\bf v}_1},\ldots, {\bf x}^{{\bf
    v}_l}]$ defined by $t_i \mapsto {\bf x}^{{\bf v}_i}$ is generated
by binomials of the form ${\bf t}^{\alpha} - {\bf t}^{\beta}$ for
some $\alpha,\beta\in \N^m$.
\end{Lem}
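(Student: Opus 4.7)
The plan is to prove that the kernel $J$ of $\hat\pi: K[t_1,\ldots,t_l]\to K[\vek{x},\vek{x}^{-1}]$, $t_i\mapsto \vek{x}^{\vek{v}_i}$, is spanned as a $K$-vector space (and hence generated as an ideal) by the binomials $\vek{t}^\alpha-\vek{t}^\beta$ with $\pi(\alpha)=\pi(\beta)$, where $\pi:\N^l\to\Z^n$ is the monoid map $\vek{b}\mapsto\sum b_i\vek{v}_i$.

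The easy direction is that every such binomial lies in $J$: if $\pi(\alpha)=\pi(\beta)$ then $\hat\pi(\vek{t}^\alpha-\vek{t}^\beta)=\vek{x}^{\pi(\alpha)}-\vek{x}^{\pi(\beta)}=0$. So the ideal $J_0\subseteq J$ generated by these binomials is contained in $J$.

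For the converse, I would take an arbitrary $f=\sum_{\alpha\in\N^l} c_\alpha\vek{t}^\alpha\in J$ and partition the finite support of $f$ according to the fibers of $\pi$. Writing $\supp(f)=\bigsqcup_{v\in\Z^n}F_v$ with $F_v=\supp(f)\cap\pi^{-1}(v)$, the image is
\begin{equation}
\hat\pi(f)=\sum_{v\in\Z^n}\Bigl(\sum_{\alpha\in F_v}c_\alpha\Bigr)\vek{x}^v\,.
\end{equation}
Since the Laurent monomials $\vek{x}^v$ are $K$-linearly independent in $K[\vek{x},\vek{x}^{-1}]$, the assumption $\hat\pi(f)=0$ forces $\sum_{\alpha\in F_v}c_\alpha=0$ for every $v$. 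Fixing an arbitrary $\alpha^{(v)}\in F_v$ for each non-empty fiber, this vanishing lets me rewrite
\begin{equation}
f=\sum_{v}\sum_{\alpha\in F_v}c_\alpha\vek{t}^\alpha
=\sum_v\sum_{\alpha\in F_v}c_\alpha\bigl(\vek{t}^\alpha-\vek{t}^{\alpha^{(v)}}\bigr)\,,
\end{equation}
which exhibits $f$ as a $K$-linear combination of binomials of the required type. Hence $f\in J_0$, so $J=J_0$.

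There is no real obstacle: the only point worth watching is step where one invokes the linear independence of distinct Laurent monomials to conclude that the fiberwise coefficient sums vanish. Everything else is bookkeeping, and the argument produces the stronger statement that $J$ is a $K$-linear span (not merely ideal-theoretically generated) of the stated binomials, which is exactly what is needed later when working with toric charts $U_\vek{a}$.
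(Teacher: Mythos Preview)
Your proof is correct and follows essentially the same approach as the paper: both group the monomials of $f$ by the fibers of $\pi$, use the linear independence of Laurent monomials to conclude that the coefficient sum over each fiber vanishes, and then rewrite $f$ as a combination of binomials. The only cosmetic difference is that the paper peels off one binomial $c_\alpha(\vek{t}^\alpha-\vek{t}^\beta)$ at a time and inducts on the number of terms, whereas you choose a fiber representative $\alpha^{(v)}$ and write the decomposition in one step.
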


\begin{proof}
Let $f\in K[t_1,\ldots,t_l]$ be in the kernel. We expand $f$ as a sum
of monomials $f= \sum_{ \alpha}c_{ \alpha} {\bf t}^{\alpha}$, where
$\alpha = (\alpha_1,\ldots,\alpha_l)$ is a multi-index. Since $f$ maps
to zero, we get $\sum_{ \alpha}c_{ \alpha} {\bf x}^{ \alpha \cdot {\bf
    v}}=0$, where $\alpha \cdot {\bf v} = \sum_{i=1}^{l}\alpha_i {\bf
  v}_i \in \Z^n$. Hence for all ${\bf w}\in\Z^n$
\begin{equation}
\sum_{ \alpha;\,  \alpha \cdot {\bf v} = {\bf w}} c_{ \alpha} = 0\,.
\end{equation}
Thus if for some $\alpha$ the
coefficient $c_{\alpha}$ is nonzero, there is another $\beta$ with
$\alpha\cdot {\bf v} = \beta \cdot {\bf v}$ and $c_{\beta}\neq
0$. Then we consider $f' = f - c_\alpha ({\bf t}^\alpha - {\bf
 t}^\beta)$, which has less monomial terms than $f$. Hence the proof
is completed by induction on the number of monomial terms.
\end{proof}


\section{A smoothness criterion}\label{SECcriterion}


The main result of this section is Theorem \ref{smoothness}, which
contains a smoothness criterion and characterizes
the ideal tangent cones of the smooth affine open charts $U_{{\bf
    a}}$. The result is the starting point for our further explorations.

The open affine charts $U_{{\bf a}}$ of the blowup
$\widetilde{\A}^n$ are by their construction toric
varieties. Requiring that $U_{{\bf a}}$ is smooth singles out a unique
affine toric variety.

\begin{Lem}\label{smoothtoric}
Let $\ms{I}=( {\bf x}^{\bf a}: {\bf a}\in A)$ be a
monomial ideal in $K[{\bf x}]$, $N$ the associated Newton polyhedron
and $\pi:  \widetilde{\A}^n \to \A^n$ the blowup of
$\A$ in the center $\ms{I}$. If ${\bf a}$ is a vertex of $N$
such that the ${\bf a}$-chart $U_{{\bf a}}$ is smooth, then the ${\bf
  a}$-chart is isomorphic to $\A^n$.
\end{Lem}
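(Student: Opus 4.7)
The plan is to exploit that the coordinate ring of $U_{\vek{a}}$ is $K[\IT{\vek{a}}{\Id}]$, combine the smoothness hypothesis with the standard classification of smooth affine toric varieties to force the ideal tangent cone to be \emph{simplicial}, and then invoke the preceding simplicial lemma together with Lemma \ref{lem.Ewald} to identify $U_{\vek{a}}$ with $\A^n$. Since $\vek{a}$ is a vertex, the earlier lemma guarantees that $\IT{\vek{a}}{\Id}$ is pointed, so it possesses a unique minimal generating system, say $\vek{w}_1,\ldots,\vek{w}_l \in \Z^n$.

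The central step is to pin down $l$. On the one hand, $U_{\vek{a}}$ is a normal affine toric variety containing the torus $(K^*)^n$ as an open dense subset (because $K[\IT{\vek{a}}{\Id}]\subseteq K[\vek{x},\vek{x}^{-1}]$), and the standard smoothness criterion for such varieties (see e.g.\ \cite{Fu,Ew}) asserts that $U_{\vek{a}}$ is smooth precisely when its minimal generators form part of a $\Z$-basis of $\Z^n$; this already gives $l\le n$. On the other hand, the standard basis vectors $\vek{e}_1,\ldots,\vek{e}_n$ lie in $\IT{\vek{a}}{\Id}$ by construction, so each $\vek{e}_i$ is an $\N$-linear combination of $\vek{w}_1,\ldots,\vek{w}_l$, hence in particular a $\Z$-linear combination. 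Since the $\vek{e}_i$ $\Z$-span $\Z^n$, so must $\vek{w}_1,\ldots,\vek{w}_l$, which forces $l\ge n$. Combining both inequalities yields $l=n$, that is, $\IT{\vek{a}}{\Id}$ is simplicial.

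Having reached simplicity, the lemma just above applies to show $\vek{w}_1,\ldots,\vek{w}_n$ is a $\Z$-basis of $\Z^n$. It remains to upgrade this to a ring isomorphism $K[t_1,\ldots,t_n]\xrightarrow{\sim} K[\IT{\vek{a}}{\Id}]$, $t_i\mapsto \vek{x}^{\vek{w}_i}$. Surjectivity is immediate from the fact that the $\vek{w}_i$ generate $\IT{\vek{a}}{\Id}$. For injectivity I would apply Lemma \ref{lem.Ewald}: the kernel is generated by binomials $\vek{t}^{\alpha}-\vek{t}^{\beta}$ with $\sum_i \alpha_i\vek{w}_i = \sum_i \beta_i\vek{w}_i$, and linear independence of the basis $\vek{w}_1,\ldots,\vek{w}_n$ forces $\alpha=\beta$, so the kernel is zero. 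Passing to $\Spec$ produces $U_{\vek{a}}\cong\A^n$.

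The step I anticipate as the main obstacle is the clean invocation of the smoothness criterion for affine toric varieties in the form ``minimal semigroup generators are part of a $\Z$-basis,'' since the references cited in the paper phrase the statement in terms of cones of a fan rather than directly in terms of the semigroup $\IT{\vek{a}}{\Id}$; the argument above needs the equivalent dual formulation, namely that a pointed semigroup $S\subseteq\Z^n$ with $S-S=\Z^n$ yields a smooth $\Spec K[S]$ iff its minimal generators extend to a $\Z$-basis. Once this is in place, the combinatorial pigeonhole between ``$\le n$ by smoothness'' and ``$\ge n$ because $\vek{e}_1,\ldots,\vek{e}_n\in \IT{\vek{a}}{\Id}$'' together with Lemma \ref{lem.Ewald} finishes the proof without further difficulty.
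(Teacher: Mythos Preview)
Your argument is correct, but it takes a considerably longer route than the paper's. The paper dispatches the lemma in two lines: since $U_{\vek{a}}$ is a smooth $n$-dimensional affine toric variety, the classification gives $U_{\vek{a}}\cong K^{n-k}\times(K^*)^k$ for some $k$; if $k>0$ the coordinate ring has nontrivial monomial units, forcing $\IT{\vek{a}}{\Id}$ to contain a nonzero element and its negative, contradicting pointedness at the vertex $\vek{a}$. Hence $k=0$. You instead invoke the smoothness criterion at the level of semigroup generators, squeeze $l=n$ between the two inequalities, and then build the isomorphism explicitly via Lemma~\ref{lem.Ewald}. This is more constructive and in fact already establishes the ``smooth $\Rightarrow$ simplicial'' half of Theorem~\ref{smoothness}(ii), which in the paper's organization \emph{relies} on Lemma~\ref{smoothtoric}; so your approach partially inverts the paper's chain of dependencies. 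One point you should make explicit: the semigroup form of the smoothness criterion you quote is stated for saturated semigroups (equivalently, normal toric varieties), and you assert normality of $U_{\vek{a}}$ without justification. It does follow immediately from smoothness, but say so before invoking the criterion; otherwise the step ``minimal generators form part of a $\Z$-basis'' is not yet licensed for the a~priori possibly non-normal $\Spec K[\IT{\vek{a}}{\Id}]$.
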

\begin{proof}
By construction $U_{{\bf a}}$ is an $n$-dimensional affine toric
variety, and since it is smooth, we have $U_{{\bf a}} \cong
K^{n-k} \times (K^*)^{k}$ for some $k$, see e.g.
\cite{Fu}. But if $k$ is nonzero the ideal tangent cone of the
vertex ${\bf a}$ is not pointed.
\end{proof}

\begin{thm}\label{smoothness}
Let $\ms{I}=( {\bf x}^{\bf a}: {\bf a}\in A)$ be a
monomial ideal in $K[{\bf x}]$, $N$ the associated Newton polyhedron
and $\pi: \widetilde{\A}^n \to \A^n$ the blowup of
$\A^n$ in the center $\ms{I}$. Then we have:
\begin{itemize}
\item[(i)] When ${\bf a}\in A$ is not a vertex of $N$, then the ${\bf
    a}$-chart is already covered by the affine charts of
  $\widetilde{\A}^n$ corresponding to the vertices of $N$.
\item[(ii)] When ${\bf a}$ is a vertex of $N$, then the ${\bf
    a}$-chart is smooth if and only if the ideal tangent cone
  $IT_{{\bf a}}(\ms{I})$ is simplicial.
\item[(iii)] When ${\bf a}$ is a vertex of $N$ and the ideal tangent
  cone is simplicial, then each minimal generator of $IT_{{\bf
      a}}(\ms{I})$ is primitive and $IT_{{\bf a}}(\ms{I}) = T_{{\bf
      a}}(N)\cap \Z^n$.
\item[(iv)] When ${\bf a}$ is a point of $\supp(I)$ such
  that all its neighboring lattice points are in $\supp(\Id)$,
  then the ${\bf a}$-chart is isomorphic to $(K^*)^n$.
\end{itemize}
\end{thm}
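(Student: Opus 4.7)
The strategy is to translate every assertion into a statement about the monoid $\IT{\vek{a}}{\Id}$, whose semigroup algebra is, by the discussion preceding Lemma~\ref{lem.Ewald}, the coordinate ring of $U_\vek{a}$, and which sits inside the Laurent polynomial ring $K[\vek{x},\vek{x}^{-1}]$.

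For (i), I would start from the identity that appeared in the proof that non-vertices produce non-pointed ideal tangent cones: if $\vek{a}\in A$ is not a vertex of $N$, then $\vek{a}$ is a rational convex combination of vertices $\vek{b}_1,\dots,\vek{b}_r$ of $N$ (which necessarily belong to $A$), so after clearing denominators one obtains $\sum_i \mu_i(\vek{b}_i-\vek{a})=\vek{0}$ with $\mu_i\in\N$ not all zero. Passing to the Laurent embedding yields $\prod_i (\vek{x}^{\vek{b}_i-\vek{a}})^{\mu_i}=1$ inside $\mathcal{O}(U_\vek{a})$, so each $\vek{x}^{\vek{b}_i-\vek{a}}$ is a unit there. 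Consequently every $\vek{x}^{\vek{a}-\vek{b}_i}$, and hence every generator $\vek{x}^{\vek{c}-\vek{b}_i}$ with $\vek{c}\in A$, belongs to $\mathcal{O}(U_\vek{a})$. This exhibits $U_\vek{a}$ as a principal open subscheme of the vertex chart $U_{\vek{b}_i}$, proving that the vertex charts already cover $U_\vek{a}$.

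For (ii), if $\IT{\vek{a}}{\Id}$ is simplicial, the lemma immediately preceding the theorem forces its $n$ minimal generators to form a $\Z$-basis, so $K[\IT{\vek{a}}{\Id}]\cong K[t_1,\dots,t_n]$ and $U_\vek{a}\cong\A^n$ is smooth. Conversely, if $U_\vek{a}$ is smooth, Lemma~\ref{smoothtoric} gives $U_\vek{a}\cong\A^n$; invoking the classification of affine toric varieties whose cone is pointed (which holds here since $\vek{a}$ is a vertex), one concludes that the cone is simplicial with exactly $n$ primitive lattice generators. Part (iii) follows immediately: basis vectors of $\Z^n$ are primitive, and since the real tangent cone $T_\vek{a}(N)$ is the $\R_+$-hull of the same generating set $\{\vek{a}'-\vek{a},\,\vek{e}_i\}$ (by the definition $N=\conv(A)+\R_+^n$), simpliciality forces those $n$ minimal generators to $\R_+$-generate $T_\vek{a}(N)$ as well, so $T_\vek{a}(N)\cap\Z^n$ coincides with the $\N$-span of a $\Z$-basis, namely $\IT{\vek{a}}{\Id}$.

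For (iv), I would invoke the identity $\IT{\vek{a}}{\Id}={}_\N\langle \vek{c}-\vek{a}:\vek{c}\in\supp(\Id)\rangle$ recorded just before the definition of ``simplicial''. The hypothesis places all $2n$ vectors $\pm\vek{e}_i$ in $\IT{\vek{a}}{\Id}$, so $\IT{\vek{a}}{\Id}=\Z^n$ and $\mathcal{O}(U_\vek{a})=K[\vek{x},\vek{x}^{-1}]$, whence $U_\vek{a}\cong(K^*)^n$.

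The main obstacle I anticipate is in part (i): one must pass carefully from the abstract monoid identity to an honest scheme-theoretic inclusion $U_\vek{a}\hookrightarrow U_{\vek{b}_i}$, and for this the explicit embedding $K[\IT{\vek{a}}{\Id}]\subseteq K[\vek{x},\vek{x}^{-1}]$ is essential so that $\prod(\vek{x}^{\vek{b}_i-\vek{a}})^{\mu_i}=1$ is a literal equality rather than one modulo relations. Once this is secured, the remaining parts are direct applications of toric dictionary: units vs.\ pointedness, simplicial unimodular cones vs.\ $\A^n$, and the whole lattice vs.\ the torus.
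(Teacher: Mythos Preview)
Your proposal is correct and follows essentially the same route as the paper. In part (i) you phrase the key step in terms of units in the Laurent embedding while the paper phrases it as a monoid inclusion $IT_{\vek{a}_i}(\Id)\subset IT_{\vek{a}}(\Id)$, but these are the same observation; in part (ii) you invoke the toric classification for the smooth $\Rightarrow$ simplicial direction whereas the paper argues directly that the minimal generating set of $IT_{\vek{a}}(\Id)$ must have exactly $n$ elements and then uses Lemma~\ref{lem.Ewald}, which is slightly more self-contained but amounts to the same thing; parts (iii) and (iv) match the paper's arguments almost verbatim.
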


\begin{proof}
$(i)$: If ${\bf a}\in N$ is not a vertex, ${\bf a}$ is
contained in the convex hull of some vertices ${\bf a}_1,\ldots,{\bf
  a}_m$ of $N$. We thus can write ${\bf a} = \sum_{i=1}^{m} \lambda_i
{\bf a}_i$ where the $\lambda_i$ are nonzero positive rational numbers
with $\sum_{i=1}^{m}\lambda_i = 1$. The vectors ${\bf a}_i - {\bf a}$
are in the ideal tangent cone and
\begin{equation}
\sum_{i=1}^{m} \lambda_i ({\bf a}_i - {\bf a}) = 0\,.
\end{equation}
Thus by multiplying with a common denominator of the
$\lambda_i$ we can write $0 = \sum_{i=1}^{m} \alpha_i ({\bf a}_i -
{\bf a})$ with $\alpha_i \in \N$ and $\alpha_i \geq 1$ for
all $i$. Thus $IT_{{\bf a}}(\ms{I})$ is not pointed and for any $j$ we
have
\begin{equation}
{\bf a} - {\bf a}_j = \sum_{i\neq j}  \alpha_i ({\bf a}_i -{\bf a} ) +
(\alpha_j -1)({\bf a}_j - {\bf a}) \,\in \, IT_{{\bf a}}(\ms{I})\,.
\end{equation}
But then it follows that $IT_{{\bf a_i}}(\ms{I}) \subset IT_{{\bf
    a}}(\ms{I}) $ for all ${\bf a}_i$, since for any ${\bf a}'\neq
{\bf a}_i$ we have
\begin{equation}
{\bf a}' - {\bf a}_i = ({\bf a}' - {\bf a}) + ({\bf a} - {\bf a}_i)
\,\in\,IT_{{\bf a}}(\ms{I}) \,.
\end{equation}
Therefore $K[IT_{{\bf a}_i}(\ms{I})] \subset K[IT_{{\bf a}}(\ms{I})]$
and thus ${\rm Spec} (K[IT_{{\bf a}}(\ms{I})]) \subset {\rm
  Spec}(K[IT_{{\bf a}_i}(\ms{I})])$.

$(ii)$: The ideal tangent cone $IT_{{\bf a}}(\ms{I})$ is
generated by all standard basis vectors ${\bf e}_i$, for $1\leq i \leq
n$ of $\Z^n$ and the vectors ${\bf a}' - {\bf a}$ where ${\bf
  a}'$ runs over all vertices of $N$, except ${\bf a}$. Together we
denote these generators by ${\bf b}_i$, $1\leq i \leq M$. The ideal
tangent cone of ${\bf a}$ has a unique minimal generating set
$S$. Since we can try to eliminate any of the ${\bf b}_i$ to get a
minimal generating set, we have $S\subset \left\{ {\bf b}_1,\ldots,
  {\bf b}_M\right\}$.

First assume $U_{{\bf a}}$ is smooth. By Lemma \ref{smoothtoric} we
know that $U_{{\bf a}}$ is isomorphic to $\A^n$ and thus
$K[IT_{{\bf a}}(\ms{I})] \cong K[y_1,\ldots,y_n]$. Hence $S$ contains
exactly $n$ elements and we may assume $S= \left\{ {\bf b}_1,\ldots,
  {\bf b}_n\right\}$. Any ${\bf b}_i\in S$ can be expressed as an
$\N$-linear combination of the ${\bf b}_k$ with $1\leq k\leq n$. In
particular, the $\N$-span of the set $S$ contains the basis
vectors ${\bf e}_i$ of $\Z^n$. Hence $S$ is a basis for
$\Z^n$ and thus $IT_{{\bf a}}(\ms{I}) $ is simplicial.

Conversely, assume the set $S$ contains precisely $n$ linearly
independent elements. Then $S$ is a basis for
$\Z^n$. The coordinate ring of $U_{{\bf a}}$ is $K[{\bf
  x}^{{\bf b}_i}\,; 1\leq i\leq n]$ and we have a map
$K[z_1,\ldots,z_n]\to K[{\bf x}^{{\bf b}_i}\,; 1\leq i\leq M]$ sending
$z_i $ to ${\bf x}^{{\bf b}_i}$. This map is clearly surjective. By
Lemma \ref{lem.Ewald} we know that the kernel is generated by
binomials. Hence suppose $f={\bf z}^{{\bf w}} - {\bf z}^{{\bf u}} $ is
a generator of the kernel, then we must have $\sum_{i=1}^{n} w_i {\bf
  b}_i - \sum_{i=1}^{n} u_i {\bf b}_i=0$. Since the ${\bf b}_i$ are a
basis, we must have $u_i = w_i$ and thus $f=0$. Therefore the
coordinate ring of $U_{{\bf a}}$ is $K[z_1,\ldots,z_n]$ and $U_{{\bf
    a}} \cong {K}^n$.

$(iii)$: Suppose that $\frac{1}{r}{\bf b}_1$ is primitive
for some positive integer $r\geq 1$. Then the vectors $\frac{1}{r}{\bf
  b}_1,{\bf b}_2,\ldots,{\bf b}_n$ constitute a basis of
$\Z^n$. However the matrix that relates this basis to the
basis $S$ has determinant $ 1/r$. Hence we must have
$r=1$. Furthermore, we clearly have $IT_{{\bf a}}(\ms{I})\subset
T_{{\bf a}}(N)\cap \Z^n$. Suppose $p\in T_{{\bf a}}(N)\cap
\Z^n$. Then $p$ is an ${\rm I\!R}_{\geq 0}$-linear
combination and a $\Z$-linear combination of the vectors
${\bf b}_i$ in $S$. Since the ${\bf b}_i$ are a basis, the
coefficients in the expansions coincide and are thus in $\N =
\Z\cap {\rm I\!R}_{\geq 0}$. Thus $p\in IT_{{\bf
    a}}(\ms{I})$.

\noindent $(iv)$: When ${\bf a}$ is in each lattice direction surrounded by points of $\supp(\Id)$, then the ideal
tangent cone of ${\bf a}$ is all of $\Z^n$. Thus the
coordinate ring of $U_{{\bf a}}$ is
$K[z_1,\ldots,z_n,z_{1}^{-1},\ldots,z_{n}^{-1}]$, which proves the
claim.
\end{proof}

\begin{defn}
We call a monomial ideal $\Id ( \vek{x}^\vek{a}: \vek{a} \in A)$ in $K[{\bf x}]$
tame if the blowup $\widetilde \A^n$ of $\A^n$ is smooth. The corresponding
Newton polyhedron $N(\Id)$ is called tame if $\Id$ is tame.
\end{defn}


\section{Blowup in the Rosenberg ideal}  \label{SECrosenberg}


In \cite{Ro} Rosenberg constructs nonreduced monomial ideals $\ms{R}$
 in $K[{\bf x}]$ whose zero set is a union of coordinate axes in $\A^n$
and such that the blowup of $\A^n$ with center $\ms{R}$ is smooth. The
constructed ideals are invariant under any permutation of the coordinates of
$\A^n$ that leaves the zero set invariant. Rosenberg also generalizes the
construction to closed subschemes $Z$ of $\A^n$ that are unions of coordinate
subspaces of dimension $1 \leq r < n$. The idea of the construction is as
follows: Consider the reduced ideal $\Id$ of $Z$. The blowup of $\A^n$ with
center $\Id$ will in general be singular; from some vertices in the Newton
polyhedron $N(\Id)$ emanate too many edges, so that the associated ideal tangent
cones are not simplicial. One can transform $\Id$ by multiplication or
intersection with another monomial ideal such that the blowup becomes smooth and
the radical ideal $\Id$ is unchanged. We study the effect on $N(\Id)$ of this
method. For a sheaf-theoretic interpretation and further details also see
\cite{Ro}.

Let $1 < s < n$ and let $\Id$ be the reduced ideal of the first $s$ coordinate axes;
\begin{equation}\label{eqn.rosen}
\Id = \bigcap_{i=1}^s (x_1, \ldots, \hat x_i, \ldots, x_n)\,.
\end{equation}
The ideal $\Id$ is generated by the monomials $x_ix_j$ with $1 \leq i < j \leq
s$ and the monomials $x_i$ with $s < i$. We now briefly show that the blowup
with center $\Id$ is singular: Consider the $x_n$-chart. Here 
$u_{jk}=\frac{x_jx_k}{x_n}$  with $j \neq k \leq s$ are minimal generators.
These generators satisfy relations of the
form
\begin{equation}
 u_{jk} \cdot x_n - x_j x_k\,.
\end{equation}
The chart expression is then
\begin{equation}
\Spec \Bigl( K [ {\bf x}, u_{jk}, \frac{x_l}{x_n}: l > s ]
/ ( u_{jk}x_n - x_j x_k : j < k \leq s ) \Bigr)\,,
\end{equation}
which is clearly singular.

The idea to smooth $N(\Id)$ is to ``pull apart'' a vertex $\vek{v}$ of
$N(\Id)$, from which more than $n$ edges emanate. Each new vertex should inherit
some edges from $\vek{v}$ but the corresponding ideal tangent cone should have
exactly $n$ minimal generators. Rosenberg proposes two possibilities to smooth
nonsimplicial ideal tangent cones. The
first method consists in slicing the distracting vertex off $N(\Id)$, see
fig.\ref{fig:newtonchop}. This slicing corresponds to intersecting $\Id$ with
another ideal. The second method is to stretch the Newton polyhedron, that is,
one substitutes the polyhedron $N(\Id)$ with the Minkowski sum of $N(\Id)$ with
another suitable Newton polyhedron, see fig. \ref{fig:newtonsum}. This
stretching corresponds to multiplying $\Id$ with another ideal.

\begin{figure}[!h]
\begin{flushleft}
\begin{minipage}{0.4 \textwidth}\centering
 \scalebox{0.4}{\includegraphics{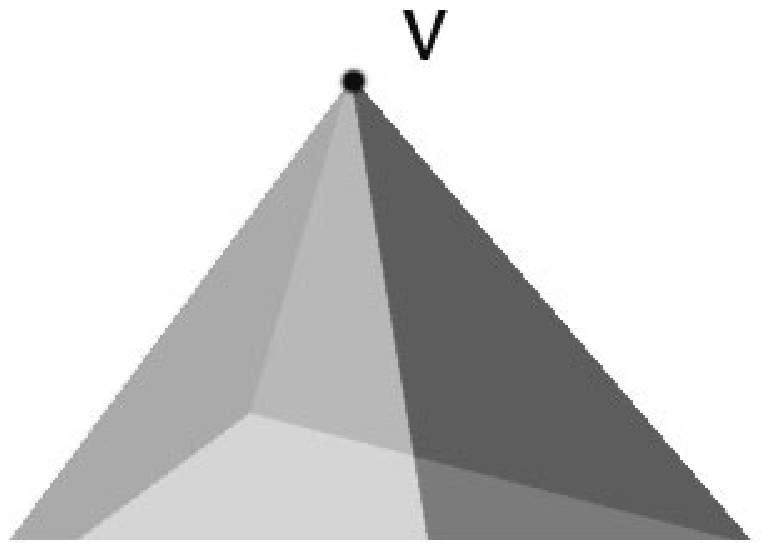}}

\end{minipage}%
\begin{minipage}{0.1 \textwidth}\centering
$\longrightarrow$
\end{minipage}%
\begin{minipage}{0.4 \textwidth} \centering
\scalebox{0.3}{\includegraphics{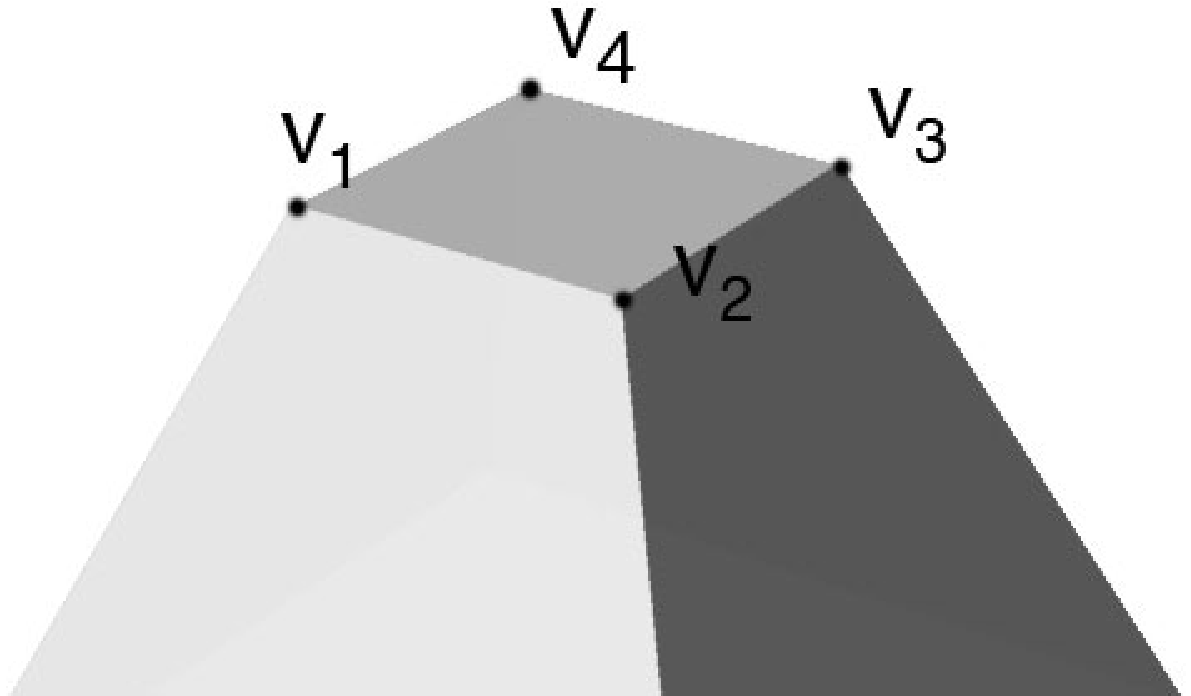}}

\end{minipage}
\end{flushleft}
\caption{ \label{fig:newtonchop} Chopping off a vertex with too many edges.}
\end{figure}

\begin{figure}[!h]
\begin{flushleft}
\begin{minipage}{0.4 \textwidth}\centering
 \scalebox{0.4}{\includegraphics{ecke1.eps}}

\end{minipage}%
\begin{minipage}{0.1 \textwidth}\centering
$\longrightarrow$
\end{minipage}%
\begin{minipage}{0.4 \textwidth} \centering
\scalebox{0.3}{\includegraphics{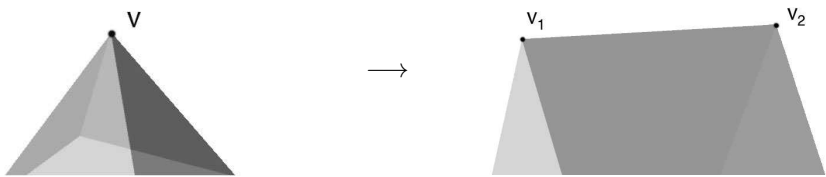}}

\end{minipage}
\end{flushleft}
\caption{ \label{fig:newtonsum}  The sum of two Newton polyhedra. }
\end{figure}

To leave the zero set unchanged, the radical ideal of $\Id$ has to be preserved.
For both smoothing methods, we use a power of the
maximal ideal $\mathfrak{m}$, where $\mathfrak{m}=(\xin)$
is the reduced ideal of the origin of $\A^n$. The choice of $\mathfrak{m}$ as
smoothing ideal is in no way unique! We could choose any ideal $\ms{J}$ such
that the blow up of $\A^n$ with center $\Id \cap \Jd$ or $\Id \cdot \Jd$ is
smooth and such that $\sqrt{\Id \cap \Jd}=\sqrt{\Id \cdot \Jd} =\Id$. But using
$\mathfrak{m}$ ensures the zero set is unchanged as we have $\mathfrak{m}\supset
\Id$ from which it follows that
\begin{equation}
\sqrt{ \Id \cap \mathfrak{m}^k}= \Id \cap \mathfrak{m} =
\Id\,.
\end{equation}

Since $\Id \cap \mathfrak{m} = \Id$, we consider the next option:
$\Id \cap \mathfrak{m}^2$. The associated Newton polyhedron still has
vertices with nonsimplicial ideal tangent cones (cf. fig.
\ref{fig:rosenbideal}, left: the Newton polyhedron of $\Id \cap
\mathfrak{m}^2$ for $n=3, s=2$). We can smooth these vertices by
further intersecting with $\mathfrak{m}^3$ or by multiplying $\Id\cap
\mathfrak{m}^2$ with $\mathfrak{m}$. As $\mathfrak{m}(\Id \cap
\mathfrak{m}^2) = \Id\cap \mathfrak{m}^3$ both methods yield the same result.

\begin{figure}[!h]
 \begin{minipage}[c]{0.5\textwidth}
\begin{center}
\phantom{aa}
 \scalebox{0.3}{\includegraphics{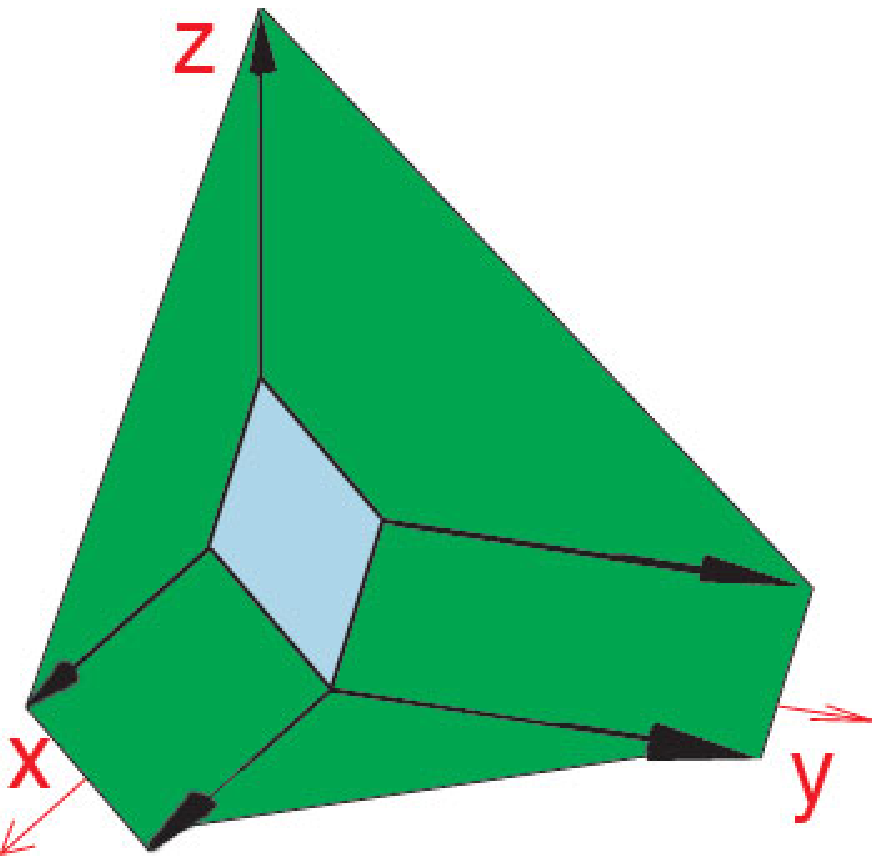}} \\
 \end{center}

\end{minipage}%
\begin{minipage}[c]{0.5\textwidth}

 \begin{center}
 \phantom{aa}
\scalebox{0.3}{\includegraphics{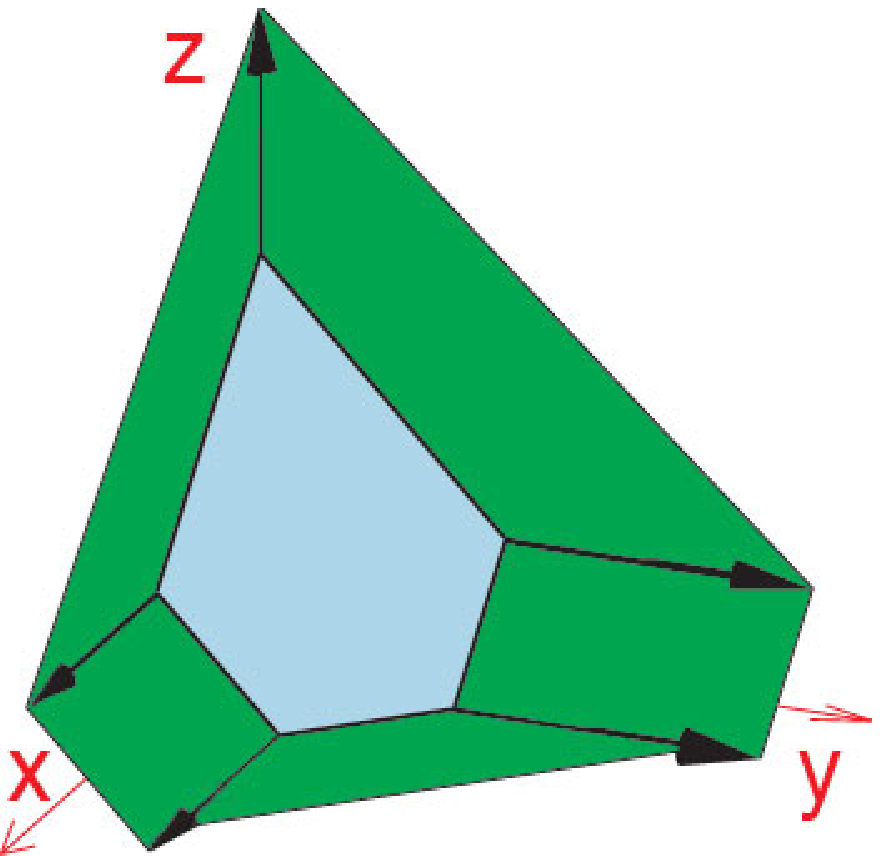}} \\
\end{center}
\end{minipage}
\caption{ \label{fig:rosenbideal} The Newton polyhedra of  $\Id \cap
\mathfrak{m}^2$ (left) and $\ms{R}$ (right). }
\end{figure}

We define $\ms{R}:= \Id \cap \mathfrak{m}^3$ and observe that the ideal $\ms{R}$
is invariant under each permutation of the coordinates of $\A^n$ that leaves
$\Id$ invariant.

\begin{Snum}
Let $\Id$ be the reduced ideal of the first $s$ coordinate axes of $\A^n$
defined in eqn.(\ref{eqn.rosen}). Then the ideal $\ms{R}= \Id \cap
\mathfrak{m}^3$ is tame. 
\end{Snum}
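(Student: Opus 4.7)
By Theorem~\ref{smoothness} it suffices to check that each vertex of $N(\ms{R})$ has a simplicial ideal tangent cone. The plan is first to identify the minimal generators of $\ms{R}$, then to classify the vertices of $N(\ms{R})$, and finally to exhibit for each vertex an explicit $n$-element $\Z$-basis of its ideal tangent cone.

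A monomial $\vek{x}^{\vek{a}}$ lies in $\ms{R}=\Id\cap\mathfrak{m}^3$ if and only if $|\vek{a}|\geq 3$ and $\supp(\vek{a})$ is not a singleton $\{l\}$ with $l\leq s$. A short reduction (shrink any coordinate of $\vek{a}$ that is $\geq 2$, or drop a unit when $|\supp(\vek{a})|\geq 3$) shows that the minimal generators of $\ms{R}$ are precisely the exponents $\vek{a}\in\N^n$ with $|\vek{a}|=3$, with the sole exception of $3\vek{e}_l$ for $l\leq s$. I would then write
\begin{equation*}
N(\Id)=\{\vek{x}\geq 0:x_j\leq|\vek{x}|-1\text{ for all }j\leq s\}.
\end{equation*}
Since every vertex of $N(\Id)$ has $|\vek{v}|\leq 2$, the halfspace $|\vek{x}|\geq 3$ defining $N(\mathfrak{m}^3)$ is binding, so every vertex of $N(\ms{R})=N(\Id)\cap N(\mathfrak{m}^3)$ lies on the hyperplane $|\vek{x}|=3$. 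A count of the active facets $x_i=0$ and $x_j=|\vek{x}|-1$ (with $j\leq s$) within this hyperplane gives the two vertex types
\begin{equation*}
\text{(A)}\quad \vek{a}=3\vek{e}_i\ \text{with}\ i>s,\qquad\text{(B)}\quad \vek{a}=2\vek{e}_j+\vek{e}_k\ \text{with}\ j\leq s,\ k\neq j.
\end{equation*}

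At a type-(A) vertex I would propose the basis $\vek{e}_i$ together with $\vek{e}_k-\vek{e}_i$ for $k\neq i$. Each sits in $\IT{\vek{a}}{\ms{R}}$: $\vek{e}_i\in\Sigma_n$, and $\vek{e}_k-\vek{e}_i=(2\vek{e}_i+\vek{e}_k)-3\vek{e}_i$ with $2\vek{e}_i+\vek{e}_k$ a genuine minimal generator because its support contains $i>s$. Any other difference $\vek{a}'-3\vek{e}_i$ with $|\vek{a}'|=3$ equals $\sum_{k\neq i}a'_k(\vek{e}_k-\vek{e}_i)$, and each $\vek{e}_l$ with $l\neq i$ equals $\vek{e}_i+(\vek{e}_l-\vek{e}_i)$; minimality follows by inspecting single coordinates. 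At a type-(B) vertex the candidate basis is $\vek{e}_j$, $\vek{e}_k-\vek{e}_j$ and $\vek{e}_l-\vek{e}_k$ for $l\neq j,k$, arising from the generators $\vek{e}_j+2\vek{e}_k$ and $2\vek{e}_j+\vek{e}_l$ (valid because their supports are of size two and contain $j\leq s$). A direct expansion gives
\begin{equation*}
\vek{a}'-\vek{a}=(2-a'_j)(\vek{e}_k-\vek{e}_j)+\sum_{l\neq j,k}a'_l(\vek{e}_l-\vek{e}_k)
\end{equation*}
for every degree-three generator $\vek{a}'$, and once again every standard basis vector lies in the span.

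The key technical point---and the main obstacle---is the inequality $a'_j\leq 2$ needed for the type-(B) expansion to be an $\N$-combination. This is precisely where the step from $\mathfrak{m}^2$ to $\mathfrak{m}^3$ matters: the excluded generators $x_l^3$ with $l\leq s$ are exactly the ones that would violate $a'_j\leq 2$, so removing them resolves the nonsimplicial vertex that survived in $N(\Id\cap\mathfrak{m}^2)$. Once this inequality is established, simplicialness at both vertex types is immediate, and Theorem~\ref{smoothness}(ii) yields tameness of $\ms{R}$.
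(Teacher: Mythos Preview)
Your proof is correct and follows essentially the same approach as the paper: both invoke Theorem~\ref{smoothness}, identify the same two vertex types $3\vek{e}_i$ with $i>s$ and $2\vek{e}_j+\vek{e}_k$ with $j\leq s$, and exhibit the identical $n$-element bases $\{\vek{e}_i,\ \vek{e}_k-\vek{e}_i\}$ and $\{\vek{e}_j,\ \vek{e}_k-\vek{e}_j,\ \vek{e}_l-\vek{e}_k\}$ for the respective ideal tangent cones. Your write-up supplies more detail than the paper---the polyhedral description of $N(\Id)$, the explicit expansion $\vek{a}'-\vek{a}=(2-a'_j)(\vek{e}_k-\vek{e}_j)+\sum_{l\neq j,k}a'_l(\vek{e}_l-\vek{e}_k)$, and the observation that $a'_j\leq 2$ is exactly what the passage from $\mathfrak{m}^2$ to $\mathfrak{m}^3$ buys---but the underlying argument is the same.
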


 \begin{proof}
We show the smoothness of $\widetilde \A^n$ with Theorem \ref{smoothness}. The Newton polyhedron $N(\ms{R})$ has the set of vertices
\begin{equation}
\{ 2 \vek{e}_i + \vek{e}_j : 1 \leq i \leq s, 1 \leq j \leq n, i
\neq j \} \cup \{ 3 \vek{e}_i: s < i \leq n \}\,.
\end{equation} 
It can easily be computed that the remaining generators of $\ms{R}$
correspond to interior points of $N(\ms{R})$ or lie in faces of the
Newton polyhedron. For example, for $2\vek{e}_j + \vek{e}_i$ with $1
\leq i \leq s < j \leq n$ we have 
\begin{equation}
2\vek{e}_j + \vek{e}_i = \frac{1}{2} (2\vek{e}_i + \vek{e}_j) +
\frac{1}{2}(3 \vek{e}_j)\,. 
\end{equation}
Consider the ideal tangent cone $\IT{2 \vek{e}_i + \vek{e}_j}{\ms{R}}$
for $1 \leq i \leq s$ and $1\leq  j \leq n$. The associated edge
vectors are the 
$\vek{e}_i$ for the infinitely far vertices, $\vek{e}_j - \vek{e}_i$
and $\vek{e}_k - \vek{e}_j$ with $k \not \in \{ i, j \}$ for the
adjacent vertices. There are exactly $n$ minimal generators and the
ideal tangent cone is simplicial.

For a vertex $3\vek{e}_i$ with $s<i\leq n$ the ideal tangent cone is generated
by $\vek{e}_i$ (infinitely far vertex) and the vectors $\vek{e}_k -\vek{e}_i$
for $k \neq i$ (adjacent vertices). Thus $\IT{3
\vek{e}_i}{\ms{R}}$ is simplicial. The blowup of $\A^n$ with center $\ms{R}$ is
therefore smooth.
\end{proof}


\section{Blowups in products of ideals}  \label{SECproducts}


In this section we apply the smoothness criterion \ref{smoothness} to
investigate which products of monomial ideals are tame. Li \cite{LiLi}
has shown that a sequence of blowups in ideals $\Id_1,\ldots,\Id_r$
is the same as the single blowup in the product $\Id_1\cdot
\Id_2\cdots \Id_r$ of ideals. Bodn\'ar \cite{Bo} computes the nonreduced
ideals for a sequence of blowups. First we show that the ideal tangent
cones of a product are sums of ideal tangent cones of the factors of
the product. 

\begin{Lem} \label{L:summe}
Let $\Id=(\vek{x}^{\vek{a}}, \vek{a} \in A )$ and $\Jd=(\vek{x}^{\vek{b}},
\vek{b} \in B )$ with clouds $A,B \subseteq \N^n$ be two ideals in $K[{\bf x}]$.
Then we have $N(\Id \cdot \Jd)= N(\Id)+ N(\Jd)$.
\end{Lem}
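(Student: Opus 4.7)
The plan is to unfold both sides of the claimed equality into explicit formulas involving the finite clouds $A$ and $B$, and then reduce the statement to the standard combinatorial identity $\conv(A)+\conv(B)=\conv(A+B)$ for Minkowski sums of convex hulls of finite sets.

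First I would write out the left-hand side: a set of monomial generators of $\Id\cdot\Jd$ is given by the products $\vek{x}^{\vek{a}}\vek{x}^{\vek{b}}=\vek{x}^{\vek{a}+\vek{b}}$ with $\vek{a}\in A$, $\vek{b}\in B$. Hence the cloud of $\Id\cdot\Jd$ can be taken to be the Minkowski sum $A+B$ of finite sets, and by the very definition of the Newton polyhedron,
\begin{equation}
N(\Id\cdot\Jd)=\conv(A+B)+\R^n_+.
\end{equation}
For the right-hand side, starting from $N(\Id)=\conv(A)+\R^n_+$ and $N(\Jd)=\conv(B)+\R^n_+$ and using that $\R^n_+ +\R^n_+=\R^n_+$, I would compute
\begin{equation}
N(\Id)+N(\Jd)=\conv(A)+\conv(B)+\R^n_+.
\end{equation}

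With these two formulas in hand, the entire claim reduces to the identity $\conv(A)+\conv(B)=\conv(A+B)$. The inclusion $\conv(A+B)\subseteq\conv(A)+\conv(B)$ is immediate because $A+B\subseteq\conv(A)+\conv(B)$ and the Minkowski sum of two convex sets is convex. For the reverse inclusion, I would take a generic point $\sum_i\lambda_i\vek{a}_i+\sum_j\mu_j\vek{b}_j$ of $\conv(A)+\conv(B)$, with $\lambda_i,\mu_j\ge 0$ and $\sum_i\lambda_i=\sum_j\mu_j=1$, and rewrite it as
\begin{equation}
\sum_{i,j}\lambda_i\mu_j(\vek{a}_i+\vek{b}_j),
\end{equation}
which is a convex combination of points of $A+B$ because $\sum_{i,j}\lambda_i\mu_j=1$.

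There is no real obstacle here; the only thing that requires a moment of care is remembering to absorb $\R^n_++\R^n_+$ into a single copy of $\R^n_+$ so that the two sides display exactly the same additive summand. Once both Newton polyhedra are written as $\conv(\,\cdot\,)+\R^n_+$, the statement is simply the classical identity on convex hulls of Minkowski sums.
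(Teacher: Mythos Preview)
Your argument is correct. The paper itself does not prove this lemma at all: it simply cites \cite[Lemma~2.2]{St}. Your write-up supplies the standard elementary argument behind that citation, reducing the claim to the identity $\conv(A)+\conv(B)=\conv(A+B)$ together with $\R^n_++\R^n_+=\R^n_+$. One small terminological point: in this paper ``cloud'' is reserved for the \emph{minimal} generating set, and $A+B$ need not be minimal; but this is harmless, since the Newton polyhedron can be computed from any finite generating set, which is exactly how you use it.
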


\begin{proof}
See \cite[Lemma 2.2]{St}.
\end{proof}

\begin{Lem}   \label{Ltangentenkegel}
Let $\Id_1 = (\vek{x}^\vek{a}: \vek{a} \in A)$ and $\Id_2=(\vek{x}^\vek{b}:
\vek{b} \in B)$ be two monomial ideals in $K[{\bf x}]$ with associated Newton
polyhedra $P=N(\Id_1)$ and $Q=N(\Id_2)$. Then $IT_{\vek{a} + \vek{b}}(\Id_1
\cdot \Id_2) = \IT{\vek{a}}{\Id_1} + IT_{\vek{b}}(\Id_2)$.
\end{Lem}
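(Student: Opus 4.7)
The plan is to prove equality of two $\N$-spans by a direct two-sided inclusion, using the intrinsic reformulation $\IT{\vek{c}}{\Id} = {}_\N\langle \vek{d}-\vek{c}:\vek{d}\in\supp(\Id)\rangle$ that was established in the remark at the end of Section~2 (this remark absorbs the $\Sigma_n$ summand into the support). With this reformulation in hand, the identity essentially reduces to the Minkowski-sum behaviour of supports under ideal multiplication.

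First I would record the auxiliary fact $\supp(\Id_1\cdot\Id_2) = \supp(\Id_1) + \supp(\Id_2)$. One inclusion is immediate: if $\vek{c}_1\in\supp(\Id_1)$ and $\vek{c}_2\in\supp(\Id_2)$ then $\vek{x}^{\vek{c}_1+\vek{c}_2}\in\Id_1\cdot\Id_2$. The other direction follows from the fact that $\Id_1\cdot\Id_2$ is generated by the monomials $\vek{x}^{\vek{a}'+\vek{b}'}$ with $\vek{a}'\in A$, $\vek{b}'\in B$, so any $\vek{c}\in\supp(\Id_1\cdot\Id_2)$ may be written as $\vek{a}'+\vek{b}'+\vek{m}$ with $\vek{m}\in\N^n$, which we can regroup as $(\vek{a}'+\vek{m})+\vek{b}'\in\supp(\Id_1)+\supp(\Id_2)$. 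Note that this is essentially Lemma~\ref{L:summe} at the level of lattice supports rather than convex hulls.

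For the inclusion $\IT{\vek{a}+\vek{b}}{\Id_1\cdot\Id_2}\subseteq \IT{\vek{a}}{\Id_1}+\IT{\vek{b}}{\Id_2}$, I would take a generator $\vek{c}-(\vek{a}+\vek{b})$ with $\vek{c}\in\supp(\Id_1\cdot\Id_2)$, use the decomposition $\vek{c}=\vek{c}_1+\vek{c}_2$ from the previous step, and write it as $(\vek{c}_1-\vek{a})+(\vek{c}_2-\vek{b})$, with each summand in the corresponding ideal tangent cone. Conversely, any generator $\vek{c}_1-\vek{a}$ of $\IT{\vek{a}}{\Id_1}$ equals $(\vek{c}_1+\vek{b})-(\vek{a}+\vek{b})$, with $\vek{c}_1+\vek{b}\in\supp(\Id_1\cdot\Id_2)$, and is therefore in $\IT{\vek{a}+\vek{b}}{\Id_1\cdot\Id_2}$; symmetrically for generators of $\IT{\vek{b}}{\Id_2}$. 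Taking $\N$-spans on both sides yields the reverse inclusion.

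There is no real obstacle here; the only thing to be careful about is that the definition of $\IT{\cdot}{\cdot}$ quantifies over generators $\vek{a}'\neq\vek{a}$ rather than over all support points, which creates an artificial case distinction. Invoking the intrinsic description in terms of $\supp$ from the remark preceding the definition of simplicial cones sidesteps this, since the $\Sigma_n$ summand and the condition $\vek{a}'\neq\vek{a}$ are already built into $\supp(\Id)-\vek{a}$. This keeps the argument short and avoids bookkeeping about when the difference vanishes.
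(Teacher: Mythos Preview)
Your argument is correct and is essentially the same direct verification as the paper's proof: both boil down to the identity $(\vek{a}'+\vek{b}')-(\vek{a}+\vek{b})=(\vek{a}'-\vek{a})+(\vek{b}'-\vek{b})$ applied to the generators of the product ideal. The only difference is packaging---you route the computation through the intrinsic description $\IT{\vek{c}}{\Id}={}_\N\langle \vek{d}-\vek{c}:\vek{d}\in\supp(\Id)\rangle$ and the auxiliary fact $\supp(\Id_1\cdot\Id_2)=\supp(\Id_1)+\supp(\Id_2)$, whereas the paper works directly with the finite generating sets $A$ and $B$ and splits into the three obvious cases $(\vek{a}',\vek{b})$, $(\vek{a},\vek{b}')$, $(\vek{a}',\vek{b}')$.
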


\begin{proof}
From the definition of the ideal tangent cone it follows that
\[
\begin{split}
IT_{\vek{a}}(\Id_1) & ={}_\N \langle \vek{a}'-\vek{a}: \vek{a}' \in
A \setminus \vek{a} \rangle +\Sigma_n  \,,\\
IT_{\vek{b}}(\Id_2) & =  {}_\N \langle \vek{b}'-\vek{b} :\vek{b}'
\in B \setminus \vek{b} \rangle +\Sigma_n\,.
\end{split}
\]
In $P + Q$ holds
\[
\begin{split}
IT_{\vek{a} + \vek{b}}(\Id) & =  {}_\N \langle \vek{a}'+\vek{b}-(\vek{a} +
\vek{b}), \vek{a} + \vek{b}' - (\vek{a} + \vek{b}), \vek{a}'+\vek{b}' -(\vek{a}
+ \vek{b}) \rangle +\Sigma_n\\
 & = {}_\N \langle \vek{a}'-\vek{a},   \vek{b}'- 
\vek{b} : \vek{a}' \in A \setminus \vek{a}, \vek{b}' \in B
\setminus \vek{b}  \rangle+\Sigma_n\,.
\end{split}
\]
Hence the assertion follows.
\end{proof}

One easily shows that a monomial ideal has a unique minimal set of generators. 
From now on we assume that monomial ideals are generated by a minimal set of
generators.

\begin{defn}
If $\Id ( \vek{x}^\vek{a}: \vek{a} \in A)$ is a monomial ideal with $A$
minimal, then we call $A$ the cloud of $\Id$.
Let $\Id$ and $\Jd$ be two monomial ideals in $K[\vek{x}]$ with clouds $A$ and
$B$. We say that the clouds $A$ and $B$ are transverse if the affine hulls
of $A$ and $B$ are transverse.
\end{defn}

If $A$ and $B$ are two transverse clouds, then we can perform a $\Z$-linear
transformation to achieve that $A \subseteq \N^s \times 0^{n-s}$ and $B
\subseteq 0^s \times \N^{n-s}$ for some integer $s$.

\begin{Lem} \label{einschraenk}
Let $\Id=( \vek{x}^\vek{a}: \vek{a} \in A) \subseteq K[{\bf x}]$ be a monomial
ideal with $A \subseteq \N^s \times 0^{n-s}$. Then the ideal $\Id$ in $K[{\bf
x}]$ is tame if and only if the ideal $\tilde \Id := \Id \cap K[x_1, \ldots,
x_s]$ is tame.
\end{Lem}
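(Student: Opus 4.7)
The plan is to apply the smoothness criterion (Theorem \ref{smoothness}) to both $\Id$ and $\tilde\Id$ and show that the simpliciality conditions match up vertex by vertex, using the fact that the hypothesis $A \subseteq \N^s \times 0^{n-s}$ forces a product decomposition of the Newton polyhedron.

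First, I would observe that since all exponent vectors of generators of $\Id$ have last $n-s$ coordinates zero,
\begin{equation}
N(\Id) = \conv(A) + \R^n_+ = N(\tilde\Id) \times \R^{n-s}_+,
\end{equation}
where $N(\tilde\Id) \subseteq \R^s$. The vertices of this product polyhedron are precisely the points $(\vek{a},\vek{0})$ with $\vek{a}$ a vertex of $N(\tilde\Id)$, since the only extreme point of $\R^{n-s}_+$ is the origin. In particular, the vertex sets of $N(\Id)$ and $N(\tilde\Id)$ are in natural bijection.

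Next, for such a vertex $(\vek{a},\vek{0})$, writing $\tilde A \subseteq \N^s$ for the projection of $A$, I would compute
\begin{equation}
\IT{(\vek{a},\vek{0})}{\Id} = {}_\N\langle(\vek{a}'-\vek{a},\vek{0}) : \vek{a}' \in \tilde A \setminus \vek{a}\rangle + \Sigma_n = \bigl(\IT{\vek{a}}{\tilde\Id} \times \{\vek{0}\}\bigr) + {}_\N\langle \vek{e}_{s+1},\ldots,\vek{e}_n\rangle,
\end{equation}
where the first summand absorbs $\vek{e}_1,\ldots,\vek{e}_s$ into $\IT{\vek{a}}{\tilde\Id}$. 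The key observation is that the projection $\R^n \to \R^{n-s}$ onto the last $n-s$ coordinates sends every element of $\IT{\vek{a}}{\tilde\Id} \times \{\vek{0}\}$ to zero and sends $\vek{e}_{s+j}$ to the standard basis of $\R^{n-s}$. From this it follows at once that $\vek{e}_{s+1},\ldots,\vek{e}_n$ are minimal generators of $\IT{(\vek{a},\vek{0})}{\Id}$ (any $\N$-relation expressing $\vek{e}_{s+j}$ in terms of the others would need $\vek{e}_{s+j}$ itself to produce the last coordinate), and that the embedded minimal generators of $\IT{\vek{a}}{\tilde\Id}$ remain minimal (any $\N$-relation for them cannot use the $\vek{e}_{s+j}$ as these would introduce nonzero last coordinates). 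Hence the unique minimal generating set of $\IT{(\vek{a},\vek{0})}{\Id}$ has exactly $n-s$ more elements than that of $\IT{\vek{a}}{\tilde\Id}$.

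Finally, this numerical correspondence shows that $\IT{(\vek{a},\vek{0})}{\Id}$ is simplicial (i.e.\ has $n$ minimal generators) if and only if $\IT{\vek{a}}{\tilde\Id}$ is simplicial (i.e.\ has $s$ minimal generators); the analogous equivalence for pointedness follows from the same product decomposition. Combining the bijection on vertices with this equivalence and applying Theorem \ref{smoothness} to both blowups, $\Id$ is tame if and only if $\tilde\Id$ is tame. The only step with any content is the correspondence of minimal generating sets, and the projection onto the last $n-s$ coordinates makes it essentially mechanical.
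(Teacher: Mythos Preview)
Your proof is correct and follows essentially the same approach as the paper: decompose the Newton polyhedron as a product $N(\tilde\Id)\times\R^{n-s}_+$, identify the vertices, and observe that the minimal generators of $\IT{(\vek{a},\vek{0})}{\Id}$ are those of $\IT{\vek{a}}{\tilde\Id}$ together with $\vek{e}_{s+1},\ldots,\vek{e}_n$. Your projection argument makes explicit the step the paper simply asserts, but otherwise the two arguments coincide.
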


\begin{proof}
The cloud of $\Id$ is given by points $\vek{a}= \tilde {\vek{a}} \times 0^{n-s}$
where $\tilde {\vek{a}}$ is an element of the cloud of $\tilde \Id$.
Hence we have $N(\tilde \Id) \times \R^{n-s}=N(\Id)$. The minimal generators of
$\IT{\vek{a}}{\Id}$ for a point $\vek{a} \in N(\Id)$ are thus of the form
$\vek{b}_1, \ldots, \vek{b}_t, \vek{e}_{s+1}, \ldots, \vek{e}_n$. Hence
$\IT{\vek{a}}{\Id}$ and $\IT{\tilde {\vek{a}}}{\tilde \Id}$ are simplicial if
and only if $t=s$.
\end{proof}

\begin{Lem} \label{produkt}
Let $P \subseteq \R^s$ and $Q \subseteq \R^t$ be polyhedra. Let $\vek{e}$ be a vertex of $P$ and $\vek{f}$ be a vertex of $Q$. Then $(\vek{e}, \vek{f} )$ is a vertex of $P \times Q$ in $\R^{s+t}$.
\end{Lem}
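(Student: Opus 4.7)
The plan is to use the characterization of a vertex as an extreme point: a point $\vek{v}$ in a polyhedron $R$ is a vertex if and only if, whenever $\vek{v}=\lambda\vek{r}_1+(1-\lambda)\vek{r}_2$ with $\vek{r}_1,\vek{r}_2\in R$ and $0<\lambda<1$, one must have $\vek{r}_1=\vek{r}_2=\vek{v}$. I would first recall this equivalence briefly (it is standard for polyhedra) so that the reader knows which characterization of ``vertex'' is in play.

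Then I would argue coordinatewise. Suppose
\[
(\vek{e},\vek{f})=\lambda(\vek{x}_1,\vek{y}_1)+(1-\lambda)(\vek{x}_2,\vek{y}_2)
\]
with $(\vek{x}_i,\vek{y}_i)\in P\times Q$ and $0<\lambda<1$. Projecting onto the first $s$ coordinates gives $\vek{e}=\lambda\vek{x}_1+(1-\lambda)\vek{x}_2$ with $\vek{x}_1,\vek{x}_2\in P$; since $\vek{e}$ is a vertex of $P$, this forces $\vek{x}_1=\vek{x}_2=\vek{e}$. Projecting onto the last $t$ coordinates gives the analogous equality $\vek{y}_1=\vek{y}_2=\vek{f}$. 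Hence $(\vek{x}_1,\vek{y}_1)=(\vek{x}_2,\vek{y}_2)=(\vek{e},\vek{f})$, so $(\vek{e},\vek{f})$ is extreme in $P\times Q$, i.e.\ a vertex.

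The only subtlety is making sure that $P\times Q$ really is a polyhedron in $\R^{s+t}$ so that ``vertex'' is defined for it; this follows immediately from writing $P$ and $Q$ as finite intersections of half-spaces and taking the product, or from the Minkowski-Weyl decomposition $P=\conv(M_P)+C_P$, $Q=\conv(M_Q)+C_Q$, which gives $P\times Q=\conv(M_P\times M_Q)+(C_P\times C_Q)$. There is no serious obstacle: once the extreme-point criterion is stated, the proof is a one-line coordinate projection argument. If a slicker proof is wanted one could instead choose linear functionals $\varphi$ on $\R^s$ and $\psi$ on $\R^t$ minimized uniquely at $\vek{e}$ on $P$ and at $\vek{f}$ on $Q$ respectively, and observe that $\varphi\oplus\psi$ is uniquely minimized at $(\vek{e},\vek{f})$ on $P\times Q$, but the extreme-point version is shorter.
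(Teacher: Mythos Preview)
Your proof is correct. You use the extreme-point characterization of a vertex and project onto the two factors, which works cleanly.

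The paper takes precisely the alternative you mention at the end: it picks linear functionals $\vek{a}\in\R^s$ and $\vek{b}\in\R^t$ that are uniquely maximized on $P$ at $\vek{e}$ and on $Q$ at $\vek{f}$, and checks that $(\vek{a},\vek{b})$ is uniquely maximized on $P\times Q$ at $(\vek{e},\vek{f})$. So you and the paper simply trade one standard characterization of ``vertex'' for another. Your extreme-point argument has the mild advantage that it needs nothing beyond the definition of extreme point, whereas the paper's version implicitly invokes the (equally standard) fact that every vertex of a polyhedron is exposed by some affine hyperplane. Conversely, the linear-functional route makes it immediate that $(\vek{e},\vek{f})$ is not merely extreme but an exposed face of $P\times Q$, which is exactly how the paper phrases vertices elsewhere. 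Neither approach is materially shorter or more powerful here; both are one-line arguments once the chosen characterization is on the table.
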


\begin{proof}
Because $\vek{e}$ is a vertex of $P$ there exist a vector $\vek{a} \in \R^s$ and a real number $\alpha$ with $\vek{a} \cdot \vek{e} = \alpha$ and $\vek{a} \cdot \vek{p} < \alpha$ for all $\vek{p} \in P \backslash \vek{e}$. Here $\vek{x} \cdot \vek{y}$ denotes the Euclidean scalar product. Similarly, there exists $\vek{b} \in \R^t$ and $\beta \in \R$ such that $\vek{b} \cdot \vek{f} = \vek{\beta}$ and $\vek{b} \cdot \vek{q} < \beta$ for all $\vek{q} \in Q \backslash \vek{f}$. Denoting $\vek{c}= ( \vek{a}, \vek{b}) \in \R^{s+t}$ we have
\begin{equation}
(\vek{e}, \vek{f}) \cdot \vek{c} = \vek{a} \cdot \vek{e} + \vek{b} \cdot \vek{f}= \alpha + \beta\,,
\end{equation}
and for all $(\vek{p}, \vek{q}) \in P \times Q \setminus (\vek{e},\vek{f})$
\begin{equation}
(\vek{p}, \vek{q}) \cdot \vek{c}= \vek{a} \cdot \vek{p} + \vek{b} \cdot \vek{q}
< \alpha + \beta\,.\qedhere
\end{equation}
\end{proof}

\begin{Lem} \label{produktglatt}
Let $\Id_1=(\vek{x}^\vek{a}:\vek{a} \in A)$ and $\Id_2 =
(\vek{x}^\vek{b}: \vek{b} \in B)$ be two monomial ideals in $K[{\bf x}]$
with transverse clouds $A, B \subseteq \N^n$ and associated Newton
polyhedra $P$ resp. $Q$. Then the following hold:
\begin{itemize}
\item[(i)]  If $A \subseteq \N^s \times 0^{n-s}$, $B \subseteq 0^s
  \times \N^{n-s}$ and $\tilde P= P \cap \R^s, \tilde Q= Q \cap
  \R^{n-s}$, then $N(\Id_1 \cdot \Id_2)$ is $\tilde P \times \tilde
  Q$.
\item[(ii)] If $\Id_1$ and $\Id_2$ are tame, then $\Id_1 \cdot \Id_2$
  is tame.
\end{itemize}
\end{Lem}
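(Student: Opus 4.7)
My plan is to first establish (i) by a direct Minkowski-sum computation, then to combine (i) with the tangent-cone formula of Lemma \ref{Ltangentenkegel} and Theorem \ref{smoothness} to reduce simpliciality at a vertex of the product to simpliciality at vertices of the factors.

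For (i), I would invoke Lemma \ref{L:summe} to replace $N(\Id_1 \cdot \Id_2)$ by $P + Q$. Under the transversality hypothesis $A \subseteq \N^s \times 0^{n-s}$, the Newton polyhedron factors as $P = \tilde{P} \times \R^{n-s}_+$, since $\conv(A) \subseteq \R^s \times \{0\}$ and adjoining $\R^n_+$ leaves the last $n-s$ coordinates free. Symmetrically $Q = \R^s_+ \times \tilde{Q}$. Computing coordinate-wise,
\[
P + Q = (\tilde{P} + \R^s_+) \times (\tilde{Q} + \R^{n-s}_+) = \tilde{P} \times \tilde{Q},
\]
where I use that $\tilde{P}$ and $\tilde{Q}$ are themselves Newton polyhedra and so absorb the appropriate positive orthant.

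For (ii), I would first reduce to the standard coordinates of (i) by an appropriate $\Z$-linear change of the exponent lattice, which preserves smoothness of the affine toric charts. By (i), $N(\Id_1 \cdot \Id_2) = \tilde{P} \times \tilde{Q}$, and Theorem \ref{smoothness}(i) reduces smoothness to a check at the vertices. By Lemma \ref{produkt} together with the standard converse (a vertex of a product is necessarily a product of vertices, obtained by decomposing the supporting linear functional into its two blocks), the vertices are precisely the pairs $(\tilde{\vek{a}}, \tilde{\vek{b}})$ with $\tilde{\vek{a}}$ a vertex of $\tilde{P}$ and $\tilde{\vek{b}}$ a vertex of $\tilde{Q}$. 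Lemma \ref{Ltangentenkegel} then gives
\[
IT_{(\tilde{\vek{a}},\tilde{\vek{b}})}(\Id_1 \cdot \Id_2) = IT_{\tilde{\vek{a}} \times 0}(\Id_1) + IT_{0 \times \tilde{\vek{b}}}(\Id_2),
\]
and by tameness together with the computation in the proof of Lemma \ref{einschraenk}, the minimal generators of $IT_{\tilde{\vek{a}} \times 0}(\Id_1)$ are $\vek{b}_1, \ldots, \vek{b}_s, \vek{e}_{s+1}, \ldots, \vek{e}_n$, with $\vek{b}_i \in \Z^s \times 0^{n-s}$ and forming a $\Z$-basis of $\Z^n$, while those of $IT_{0 \times \tilde{\vek{b}}}(\Id_2)$ are $\vek{e}_1, \ldots, \vek{e}_s, \vek{c}_1, \ldots, \vek{c}_{n-s}$ with $\vek{c}_j \in 0^s \times \Z^{n-s}$. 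In the sum, the vectors $\vek{e}_1, \ldots, \vek{e}_s$ are redundant because each is an $\N$-linear combination of $\vek{b}_1, \ldots, \vek{b}_s$ (the simplicial cone at $\tilde{\vek{a}}$ contains $\N^s$ and is $\N$-generated by these $\vek{b}_i$); symmetrically $\vek{e}_{s+1}, \ldots, \vek{e}_n$ are redundant. The surviving minimal set $\vek{b}_1, \ldots, \vek{b}_s, \vek{c}_1, \ldots, \vek{c}_{n-s}$ is a $\Z$-basis of $\Z^n$, so the tangent cone is simplicial and Theorem \ref{smoothness}(ii) yields smoothness.

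The main obstacle is the book-keeping for the redundancy argument: I must verify both that each standard basis vector inherited from one summand is $\N$-generated by the nonstandard generators of the other summand, and that the nonstandard generators $\vek{b}_i, \vek{c}_j$ remain minimal in the combined list. The latter relies on the block structure $\Z^s \oplus \Z^{n-s}$ provided by transversality, which prevents any $\vek{b}_i$ from being expressed using the $\vek{c}_j$ (and vice versa), together with the fact that the $\vek{b}_i$ themselves form a minimal generating set inside their block.
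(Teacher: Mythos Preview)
Your proposal is correct and follows essentially the same route as the paper: part (i) via Lemma~\ref{L:summe} and the Minkowski-sum factorisation, part (ii) via Lemma~\ref{Ltangentenkegel} to split the ideal tangent cone as a direct sum along the block decomposition $\Z^s\oplus\Z^{n-s}$, then counting minimal generators. The paper is terser about the redundancy step (it simply passes to the restricted cones $IT_{\tilde{\vek a}}(\tilde\Id_1)\subset\Z^s$ and $IT_{\tilde{\vek b}}(\tilde\Id_2)\subset\Z^{n-s}$ via Lemma~\ref{einschraenk} and reads off $s+(n-s)=n$ generators), whereas you work in $\Z^n$ throughout and explicitly eliminate the extra standard basis vectors; these are the same argument in different bookkeeping.
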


\begin{proof}
For $(i)$ we can suppose that $A \cup B $ span $\N^n$ (Lemma
\ref{einschraenk}). With Lemma \ref{L:summe} follows
\begin{equation}
N(\Id_1 \cdot \Id_2)= P+Q = \tilde P \oplus \tilde Q  \cong \tilde P
\times \tilde Q.
\end{equation}
Now we show $(ii)$: We may assume the conditions in $(i)$ hold and
thus using Lemma \ref{einschraenk} we have
$\Id_1\subset K[x_1,\ldots,x_s]$ and $\Id_2\subset
K[x_{s+1},\ldots,x_{n}]$. If $\tilde {\vek{a}}$ is a vertex of $\tilde P$
resp. $\tilde {\vek{b}}$ is a vertex of $\tilde Q$ then Lemma
\ref{produkt} yields that $(\tilde{\vek{a}},\tilde{\vek{b}})$ is a vertex
of $N(\Id_1
\cdot \Id_2 )$. Conversely each vertex of $N(\Id_1 \cdot \Id_2)$ is a
sum of two vertices $\tilde{\vek{a}} \times 0^s \in P$ and $0^{n-s}
\times \tilde{\vek{b}} \in Q$. Hence the set of vertices of $N(\Id_1
\cdot \Id_2)$ is $\{ \vek{a} + \vek{b}: \vek{a}$ a vertex of $P$,
$\vek{b}$ a vertex of $Q \}$. Using Lemma \ref{Ltangentenkegel} and
the transversality we have
$\IT{\vek{a}+\vek{b}}{\Id_1 \cdot \Id_2} =IT_{{\bf a}}(\Id_1)\oplus IT_{{\bf
    b}}(\Id_2)$, so that the minimal generators are of the form
$({\bf v},0)$ with ${\bf v}$ a minimal generator for $IT_{{\bf
    a}}(\Id_1)$ and $(0,{\bf w})$ with ${\bf w}$ a minimal generator
for $IT_{{\bf b}}(\Id_2)$. Thus there are precisely $s+(n-s)=n$
minimal generators.
\end{proof}

\begin{Snum}  \label{Spartition}
Let $\Id_i$ for $i=1, \ldots, k$ be monomial ideals in $K[{\bf x}]$ with
pairwise transverse clouds. Then $\Id:= \prod_{i=1}^k \Id_i$ is tame
if each $\Id_i$ is tame.
\end{Snum}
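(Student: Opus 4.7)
The plan is to proceed by induction on $k$, with base case $k=2$ supplied directly by Lemma \ref{produktglatt}(ii). For the inductive step, set $\Jd := \prod_{i=1}^{k-1}\Id_i$; by the inductive hypothesis $\Jd$ is tame, so it will suffice to verify that the cloud of $\Jd$ is transverse to the cloud $A_k$ of $\Id_k$. Lemma \ref{produktglatt}(ii) applied to the pair $(\Jd,\Id_k)$ then yields tameness of $\Id = \Jd\cdot\Id_k$ and closes the induction.

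To establish transversality of the cloud of $\Jd$ with $A_k$, I would first upgrade pairwise transversality of $A_1,\ldots,A_k$ to a simultaneous block decomposition: by repeated application of the consequence stated after the definition of transverse clouds, combined with Lemma \ref{einschraenk} to discard variables that are present in none of the $A_i$, a single $\Z$-linear change of coordinates produces a splitting $\R^n = \R^{s_1}\times\cdots\times\R^{s_k}$ in which each $A_i$ is supported in the $i$-th factor. Iterated application of Lemma \ref{L:summe} in these coordinates gives $N(\Jd) = N(\Id_1)+\cdots+N(\Id_{k-1})$, so the cloud of $\Jd$ sits inside $\R^{s_1+\cdots+s_{k-1}}\times 0^{s_k}$ while $A_k$ lies in $0^{s_1+\cdots+s_{k-1}}\times\R^{s_k}$. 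Transversality of these complementary blocks is immediate.

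The main technical point I expect to have to argue is the simultaneous block decomposition, since the paper only records this construction for a single pair of transverse clouds. The natural route is to iterate the two-cloud case: once $A_1$ and $A_2$ have been placed in complementary blocks, pairwise transversality of $A_3$ with each of them, together with the integrality of the clouds, should force the affine hull of $A_3$ to be $\Z$-linearly conjugate into a further disjoint block, and one continues inductively through $A_4,\ldots,A_k$. Once that bookkeeping is carried out, the remainder of the proof is a clean combination of Lemma \ref{L:summe} and Lemma \ref{produktglatt}(ii), and no further ideas are needed.
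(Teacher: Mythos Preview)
Your proposal is correct and follows essentially the same inductive strategy as the paper: induct on $k$, assume the partial product is tame, and apply Lemma \ref{produktglatt}(ii) to the partial product and the remaining factor. The only cosmetic difference is that the paper starts the induction at $k=1$ rather than $k=2$, and the paper simply asserts the simultaneous block decomposition (``let the first $k$ clouds be contained in $\N^s\times 0^{n-s}$ \ldots\ this implies $A_{k+1}\subseteq 0^s\times\N^{n-s}$'') without further comment, whereas you correctly flag it as the point requiring justification.
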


\begin{proof}
Let $\Id_i=( \vek{x}^\vek{a}:
\vek{a} \in A_i)$ be ideals in $K[{\bf x}]$ with pairwise transverse
clouds. We use induction over $k$: For $k=1$ the assertion is
trivial. For the induction step $k$ to $k+1$ let the first $k$ clouds
be contained in $\N^s \times 0^{n-s}$ for an $s \leq n$. This implies
$A_{k+1} \subseteq 0^s \times \N^{n-s}$. By the induction assumption,
the product ideal $\prod_{i=1}^k \Id_i$ is tame. Because the cloud of
$\Id$ is $A= A_{k+1}\cup\bigcup_{i=1}^k A_i $ and
$ \bigcup_{i=1}^k A_i $ and $A_{k+1}$ are transverse one
can use Lemma \ref{produktglatt} to conclude $\Id$ is tame.
\end{proof}

\begin{ex}
Let $\Id_1=(x,y)$ and $\Id_2=(z^2,zw,w^3)$ be ideals in
$K[x,y,z,w]$; $\Id_1$ is the reduced ideal
of the $zw$-plane in $\A^4$ and the nonreduced ideal $\Id_2$
defines the $xy$-plane in $\A^4$. One easily computes that both ideals
are tame. Using Proposition \ref{Spartition}, one finds that the
blowup of $\A^4$ with center
\begin{equation}
\Id_1 \cdot \Id_2 =(xz^2,yz^2,xzw,yzw,xw^3,yw^3)\,,
\end{equation}
is smooth. The zero set of $\Id_1 \cdot \Id_2$ is the union of the
$zw$- and the $xy$-plane.
\end{ex}

\begin{ex}
Let $\Id_1=(x,y)$ and $\Id_2=(x,z)$ be the reduced ideals of
the $z$- resp. $y$-axis in affine three-dimensional space with
coordinates $x,y,z$. Both ideals are tame but the clouds are not
transverse and the blowup $\widetilde
\A^3$ in the product $\Id_1 \cdot \Id_2=(x^2,xy,xz,yz)$, a nonreduced
structure on the coordinate cross, is not smooth. To see this,
consider the $yz$-chart: The ideal tangent cone $\IT{\vek{e}_2 +
  \vek{e}_3}{\Id_1 \cdot \Id_2}$ has the minimal system of
generators $\vek{e}_1 - \vek{e}_2$, $\vek{e}_1 - \vek{e}_3$,
$\vek{e}_2, \vek{e}_3$. Hence $\IT{\vek{e}_2 + \vek{e}_3}{\Id_1 \cdot
  \Id_2}$ is not simplicial. In the $yz$-chart $\widetilde \A^3= \Spec
( K[x,y,z,w] / (xz -yw))$ has an isolated (simple toric) singularity.
\end{ex}

Let $\Id = ({\bf x}^{\bf a} : {\bf a}\in A)$ be a monomial ideal in
$K[x_1,\ldots,x_n]$. A direct application of Remark \ref{Rmk:summe}
implies that the vertices of $N(\Id^2)$ are of the form $2{\bf a}$ for
some vertex ${\bf a}$ of $N(\Id)$. Since $IT_{2{\bf a}} (\Id^2) =
IT_{{\bf a}}(\Id)$, either both are pointed or both are not
pointed, which shows that ${\bf a}$ is a vertex of $N(\Id)$ if and only if
$2{\bf a}$ is vertex of $N(\Id^2)$. Furthermore, $IT_{2{\bf a}}
(\Id^2)$ is simplicial if and only if $ IT_{{\bf a}}(\Id)$ is
simplicial and thus $\Id$ is tame if and only if $\Id^2$ is tame. This
can also be proven by more general methods for homogeneous ideals, as
for example \cite[ex. 5.10 (b),II]{Hs}, \cite[ex. III-15]{EH}. We have
an easy generalization:

\begin{Lem}\label{Lem.powers}
Let $\Id_1,\ldots,\Id_k$ be monomial ideals in $K[x_1,\ldots,x_n]$ and
let $\alpha_1,\ldots,\alpha_k$ be some positive integers. Then
$\Id=\Id_1\cdot \ldots \cdot \Id_k$ is smooth if and only if
$\Jd=\Id_{1}^{\alpha_1}\cdot \ldots \cdot\Id_{k}^{\alpha_k}$ is
smooth.
\end{Lem}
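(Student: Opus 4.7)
The plan is to establish a natural bijection between the vertices of $N(\Id)$ and those of $N(\Jd)$ under which the ideal tangent cones coincide, so that tameness can be read off from a common set of cones via Theorem \ref{smoothness}.

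I would first compute both Newton polyhedra by iterating Lemma \ref{L:summe}, obtaining
\[
N(\Id) \;=\; \sum_{i=1}^{k} N(\Id_i),
\qquad
N(\Jd) \;=\; \sum_{i=1}^{k} \alpha_i\, N(\Id_i),
\]
where the second equality uses $N(\Id_i^{\alpha_i}) = \alpha_i N(\Id_i)$. I would then invoke the standard description of vertices of a Minkowski sum: a vertex of $\sum P_i$ is precisely a sum $\vek{v}_1 + \cdots + \vek{v}_k$ for which there exists a linear functional $\phi$ attaining its maximum on each $P_i$ uniquely at the vertex $\vek{v}_i$. Since multiplying a polyhedron by a positive scalar does not change which linear functionals have unique maximizers, the admissible tuples $(\vek{v}_1,\ldots,\vek{v}_k)$ are the same for $N(\Id)$ and $N(\Jd)$, yielding a bijection
\[
\vek{a} = \vek{v}_1 + \cdots + \vek{v}_k \;\longleftrightarrow\; \vek{a}' = \alpha_1 \vek{v}_1 + \cdots + \alpha_k \vek{v}_k
\]
between the vertex sets.

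To transport the ideal tangent cones through this bijection, I would iterate Lemma \ref{Ltangentenkegel} to obtain
\[
IT_{\vek{a}}(\Id) \;=\; \sum_{i=1}^{k} IT_{\vek{v}_i}(\Id_i),
\qquad
IT_{\vek{a}'}(\Jd) \;=\; \sum_{i=1}^{k} IT_{\alpha_i\vek{v}_i}(\Id_i^{\alpha_i}).
\]
The crucial identification is $IT_{\alpha \vek{v}}(\Id_i^{\alpha}) = IT_{\vek{v}}(\Id_i)$, which again follows from Lemma \ref{Ltangentenkegel} applied to $\Id_i^{\alpha} = \Id_i \cdots \Id_i$: the left-hand side is the $\alpha$-fold Minkowski sum of the monoid $IT_{\vek{v}}(\Id_i)$ with itself, and any submonoid containing $0$ is idempotent under Minkowski addition. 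Hence $IT_{\vek{a}}(\Id) = IT_{\vek{a}'}(\Jd)$, so simpliciality of every vertex tangent cone of $N(\Id)$ is equivalent to that for $N(\Jd)$, and Theorem \ref{smoothness} concludes that $\Id$ is tame if and only if $\Jd$ is tame.

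The main obstacle I foresee is the vertex-matching step for Minkowski sums: this is standard convex geometry but it is not quite encapsulated by a lemma stated earlier (Remark \ref{Rmk:summe} only rules out one kind of spurious candidate). The cleanest remedy is to insert a short auxiliary lemma stating the linear-functional characterisation of vertices of $\sum P_i$; once that is in place the remainder is routine bookkeeping with Lemma \ref{L:summe} and Lemma \ref{Ltangentenkegel}.
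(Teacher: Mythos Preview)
Your argument is correct and shares its core with the paper's proof: both hinge on the identity $IT_{\vek{a}}(\Id)=IT_{\vek{a}'}(\Jd)$ obtained by iterating Lemma~\ref{Ltangentenkegel}, and then invoke Theorem~\ref{smoothness}.

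The one genuine difference is how you set up the vertex bijection. You appeal to the linear-functional characterisation of vertices of a Minkowski sum, which is valid but, as you yourself note, is not available in the paper and would require an auxiliary lemma. The paper sidesteps this entirely: having already proved that a point of $\supp(\Id)$ is a vertex of $N(\Id)$ if and only if its ideal tangent cone is pointed, the equality $IT_{\vek{a}}(\Id)=IT_{\vek{a}'}(\Jd)$ \emph{itself} forces $\vek{a}$ to be a vertex precisely when $\vek{a}'$ is. Thus the vertex correspondence drops out of the tangent-cone computation for free, and no separate convex-geometry input is needed. Your route is perfectly sound, but the paper's is more economical and better integrated with the machinery already in place.
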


\begin{proof}
Any vertex of $\Jd$ is of the form ${\bf p}=\sum \alpha_i {\bf a}_{i}$ for
${\bf a}_i$ a vertex of $N(\Id_i)$. The ideal tangent cone of ${\bf q}=\sum
{\bf a}_{i}$ in the cloud of $\Id$ is the same as the ideal tangent
cone of ${\bf p}$ in the cloud of $\Jd$. In
particular, if one is pointed, then so is the other, and thus ${\bf
  p}$ is a vertex precisely when ${\bf q}$ is a vertex. Furthermore,
if $IT_{{\bf p}}(\Jd)$ is simplicial, then so is $IT_{{\bf q}}(\Id)$.
\end{proof}


\subsection{Coordinate ideals}


A coordinate ideal in $K[{\bf x}]$ is an ideal generated by a subset of
the $\xin$.  In this section we study products of
coordinate ideals. Let $\Id$ be a coordinate ideal in $K[{\bf x}]$. In
the Newton polyhedron $N(\Id)=\conv( \{ \vek{e}_i: i \in I \} +
\R^n_+)$ the ideal tangent cone of a vertex is of the form
\begin{equation}
\IT{\vek{e}_i}{\Id}= {}_\N\langle \vek{e}_j - \vek{e}_i: j \in I
\setminus i \rangle + \Sigma_n\,.\label{ITkoord} 
\end{equation}
There are $n$ minimal generators among these, and thus
$\IT{\vek{e}_i}{\Id}$ is simplicial.

For a coordinate ideal $\Id = (x_i : i\in
I) \subset K[x_1,\ldots,x_n]$ we identify the cloud $A = \left\{ {\bf
    e}_i :i\in I\right\}$ with the subset $I$ of
$\left\{1,2,\ldots,n\right\}$ by identifying $i\in I$ with ${\bf
  e}_i\in A$. For this reason we call $I$ the cloud of
$\Id$. Two coordinate ideals $\Id = (x_i : i\in I)$ and $\Jd = (x_j :
j\in J)$ have transverse clouds if and only if $I\cap
J=\emptyset$. By Lemma \ref{produktglatt} the product of coordinate
ideals with transverse clouds is tame.

Let $\Id= \prod_{i=1}^s \Id_i$ be a product of coordinate ideals
$\Id_i=(x_j : j \in I_i)$ with $I_i \subseteq \{ 1 , \ldots, n
\}$. Then each monomial generator of $\Id$ is a product of $s$
monomials $x_j$. All generators of $\Id$ thus look like $x_{j_1}
\cdots x_{j_s}$, with $j_i \in I_i$. From this we see that each
vertex $\vek{v}$ of the Newton polyhedron is a sum of $s$
standard basis vectors. Hence all vertices lie on the affine
hyperplane $H_s:=\{ \vek{v} \in \R^n: \vek{v} \cdot \vek{1}=s
\}$. Here $\vek{1}$ denotes the vector $(1, \ldots, 1)$.

\begin{Lem} \label{Lunimodular}
Let $\Id$ be a product of $s$ coordinate ideals $\Id_i=(x_j : j \in
I_i)$ in $K[{\bf x}]$. Each minimal generator of the ideal tangent cone
of the Newton polyhedron $N(\Id)$ in a point $\vek{a}= \sum_k \alpha_k
\vek{e}_k$ with $\sum_k \alpha_k=s$ and $\vek{a}$ an element of the
cloud of $\Id$, is of the form $\vek{e}_\beta$ resp. $\vek{e}_\beta -
\vek{e}_k$ for a $k$ with $\alpha_k \neq 0$ and some $\beta \in \{ 1,
\ldots, n \}$. At least one standard basis vector $\vek{e}_j$ is a
minimal generator of $\IT{\vek{a}}{\Id}$.
\end{Lem}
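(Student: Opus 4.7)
The plan is to exploit the product structure $\Id = \Id_1 \cdots \Id_s$ through a one-variable-at-a-time swap construction. Since $\vek{a}$ lies in the cloud, the monomial $\vek{x}^\vek{a}$ is obtained by choosing one generator from each factor $\Id_i = (x_j : j \in I_i)$, so $\vek{a} = \sum_{i=1}^s \vek{e}_{\phi(i)}$ for some choice function $\phi$ with $\phi(i) \in I_i$; in particular $\alpha_k = |\phi^{-1}(k)|$, and $\alpha_k \neq 0$ iff $k$ is in the image of $\phi$. The principal technical step, which I expect to be the main obstacle, is the following swap observation: for any other cloud element $\vek{a}' = \sum_i \vek{e}_{\phi'(i)}$ and any index $i$, the modified point $\vek{a}^{(i)} := \vek{a} - \vek{e}_{\phi(i)} + \vek{e}_{\phi'(i)}$ corresponds to the monomial obtained from $\vek{x}^\vek{a}$ by replacing only the $i$th factor (with $x_{\phi'(i)} \in \Id_i$), hence still belongs to $\Id$, so $\vek{a}^{(i)} \in \supp(\Id)$ and consequently $\vek{e}_{\phi'(i)} - \vek{e}_{\phi(i)} = \vek{a}^{(i)} - \vek{a}$ lies in $\IT{\vek{a}}{\Id}$.

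Summing over $i$ gives the telescoping identity $\vek{a}' - \vek{a} = \sum_{i=1}^s (\vek{e}_{\phi'(i)} - \vek{e}_{\phi(i)})$, expressing every cloud-difference generator of $\IT{\vek{a}}{\Id}$ as an $\N$-sum of vectors of the form $\vek{e}_\beta - \vek{e}_k$ with $\alpha_k \neq 0$. Together with the standard basis vectors $\vek{e}_1,\ldots,\vek{e}_n$, these swap vectors form a generating set $G'$ of $\IT{\vek{a}}{\Id}$ consisting entirely of vectors of the asserted form. Working in the case relevant to the smoothness applications, where $\vek{a}$ is a vertex of $N(\Id)$, the ideal tangent cone is pointed by the earlier lemma, so it admits a unique minimal set of generators. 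Since any irreducible element of a pointed $\N$-cone must appear in every generating set, the unique minimal set is contained in $G'$, and the first assertion follows.

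For the second assertion I argue by contradiction: assume no standard basis vector $\vek{e}_j$ is a minimal generator. Then by the first part the minimal generating set consists only of swap vectors $\vek{e}_\beta - \vek{e}_k$, each satisfying $\vek{1} \cdot (\vek{e}_\beta - \vek{e}_k) = 0$, where $\vek{1} = (1,\ldots,1)$. Any $\N$-combination of such vectors still has dot product $0$ with $\vek{1}$, so the cone they generate is entirely contained in the hyperplane $\{v \in \R^n : \vek{1} \cdot v = 0\}$. But $\vek{e}_1 \in \IT{\vek{a}}{\Id}$ satisfies $\vek{1} \cdot \vek{e}_1 = 1$, contradicting the fact that the minimal set generates the whole ideal tangent cone. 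Hence at least one $\vek{e}_j$ must occur as a minimal generator.
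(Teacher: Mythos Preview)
Your proof is correct and follows essentially the same route as the paper. The paper's one-line argument simply cites equation~(\ref{ITkoord}) and Lemma~\ref{Ltangentenkegel}: since $IT_{\vek{a}}(\Id)=\sum_{i=1}^{s} IT_{\vek{e}_{\phi(i)}}(\Id_i)$ and each summand is generated by the $\vek{e}_j-\vek{e}_{\phi(i)}$ together with $\Sigma_n$, the asserted generating set drops out immediately. Your swap construction is exactly this decomposition written out by hand, and your hyperplane argument $\vek{1}\cdot(\vek{e}_\beta-\vek{e}_k)=0$ is the same $(n-1)$-dimensionality observation the paper uses for the second claim. Your explicit restriction to the vertex case (to secure pointedness and hence uniqueness of the minimal generating set) is in fact more careful than the paper's brief treatment and is the only case used downstream.
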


\begin{proof}
The form of the minimal generators follows from equation (\ref{ITkoord})
with Lemma \ref{Ltangentenkegel}. Since $_\R\! \langle \vek{e}_i -
\vek{e}_j: i \neq j \rangle$ is $(n-1)$-dimensional and the minimal
generators form a basis of $\R^n$, not all minimal generators can be of
the form $ \vek{e}_i - \vek{e}_j $.
\end{proof}

\begin{defn}
For ${\bf v}=(v_1,\ldots,v_n)\in \Z^n$ we define the norm
$|{\bf v}| = \sum_{i=1}^{n}v_i$.
\end{defn}

Note that on ideal tangent cones of
coordinate ideals and products of them $|.|$ is nonnegative. The
minimal generators are of norm $1$ and $0$. We say a minimal generator
points in the direction $i$ if it
is ${\bf e}_i$ or of the form ${\bf e}_i - {\bf e}_j$ for some $j$.
Since all standard basis vectors are in the ideal tangent cone we need
at least a minimal generator in the direction $i$ for all $i$. By the
pigeon hole principle, the ideal tangent cone is not simplicial if for
some $i$ there are at least two minimal generators pointing in
direction $i$. The only way this can happen is that ${\bf e}_i - {\bf
  e}_k$ and ${\bf e}_i - {\bf e}_l$ are minimal generators for two
different $k$ and $l$. Hence, only norm zero minimal generators obstruct
simplicity for products of coordinate ideals.

\begin{Snum}  \label{Senthalten}
Let $\Id = \prod_{i=1}^{s} \Id_i$ be a product of coordinate ideals
$\Id_i = (x_k : k\in I_i)$ and let $\Jd$ be a coordinate ideal
containing $\Id$. If $\Id$ is tame, then so is $\Id\cdot \Jd$.
\end{Snum}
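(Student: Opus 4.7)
The plan is to apply Theorem \ref{smoothness}(ii), verifying that the ideal tangent cone is simplicial at each vertex of $N(\Id \cdot \Jd)$. By Lemma \ref{L:summe}, $N(\Id \cdot \Jd) = N(\Id) + N(\Jd)$; since the vertices of $N(\Jd)$ are exactly $\{\vek{e}_k : k \in J\}$ (where $J$ denotes the cloud of $\Jd$), every vertex of the Minkowski sum is of the form $\tilde{\vek{a}} = \vek{a} + \vek{e}_k$ with $\vek{a}$ a vertex of $N(\Id)$ and $k \in J$. Lemma \ref{Ltangentenkegel} then yields $\IT{\tilde{\vek{a}}}{\Id \cdot \Jd} = \IT{\vek{a}}{\Id} + \IT{\vek{e}_k}{\Jd}$, the second summand contributing the extra generators $\{\vek{e}_l - \vek{e}_k : l \in J \setminus \{k\}\}$.

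Since $\Id$ is tame, Lemma \ref{Lunimodular} together with the pigeonhole argument in the paragraph preceding the proposition tells us that the minimal generators of $\IT{\vek{a}}{\Id}$ form a $\Z^n$-basis $\{g_\beta\}_{\beta = 1}^n$ with exactly one generator per direction, each either $g_\beta = \vek{e}_\beta$ (type 1) or $g_\beta = \vek{e}_\beta - \vek{e}_{\tau(\beta)}$ with $a_{\tau(\beta)} > 0$ (type 2). I would exhibit as candidate minimal generating set for $\IT{\tilde{\vek{a}}}{\Id \cdot \Jd}$ the collection $\{g'_\beta\}$ obtained from $\{g_\beta\}$ by replacing $g_\beta = \vek{e}_\beta$ with $g'_\beta := \vek{e}_\beta - \vek{e}_k$ for each $\beta \in J \setminus \{k\}$ of type 1, and otherwise leaving $g_\beta$ unchanged. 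A unimodular row-operation argument, using the identity $\vek{e}_k = g_k + g_{\tau(k)} + \cdots$ traced along the forest path from $k$ to its root ancestor, then shows that $\{g'_\beta\}$ is again a $\Z^n$-basis.

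The main obstacle is verifying that each extra generator $\vek{e}_l - \vek{e}_k$ (for $l \in J \setminus \{k\}$) is an $\N$-linear combination of the $\{g'_\beta\}$. For $g_l$ of type 1 this is immediate, but when $g_l = \vek{e}_l - \vek{e}_{\tau(l)}$ is of type 2 one must write $\vek{e}_l - \vek{e}_k = g'_l + (\vek{e}_{\tau(l)} - \vek{e}_k)$ and iterate, tracing the chain $\tau(l) \to \tau(\tau(l)) \to \cdots$ in the forest structure until it reaches a node belonging to $J$. The hypothesis $\Jd \supset \Id$, which forces $\supp(\vek{a}) \cap J \neq \emptyset$ at every vertex $\vek{a}$ of $N(\Id)$, is indispensable here: it is what guarantees that this parent chain terminates at a vertex from which the generators of $\IT{\vek{e}_k}{\Jd}$ complete the required decomposition.
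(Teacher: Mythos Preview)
There is a genuine gap in your final step. You claim that for $l \in J \setminus \{k\}$ of type~2, the parent chain $l \to \tau(l) \to \tau^2(l) \to \cdots$ must terminate at a node belonging to $J$, and you cite as justification that $\supp(\vek{a}) \cap J \neq \emptyset$. But this only guarantees that \emph{some} element of $\supp(\vek{a})$ lies in $J$; it does not force the root of the particular tree containing $l$ to lie in $J$. When the forest has more than one component, $l$ can sit in a tree whose root lies outside $J$, and then your candidate set $\{g'_\beta\}$ fails to generate the extra vector $\vek{e}_l - \vek{e}_k$.

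This gap cannot be repaired, because the proposition as stated is false. Take $n=5$, $\Id=(x_1,x_2,x_3)(x_4,x_5)$ (tame, since the clouds are transverse) and $\Jd=(x_2,x_4,x_5)$; every generator of $\Id$ has $x_4$ or $x_5$ as a factor, so $\Jd \supseteq \Id$. At the vertex $\vek{e}_1+2\vek{e}_4$ of $N(\Id\Jd)$ one computes
\[
\IT{\vek{e}_1+2\vek{e}_4}{\Id\Jd}={}_\N\langle \vek{e}_2-\vek{e}_1,\ \vek{e}_3-\vek{e}_1,\ \vek{e}_5-\vek{e}_4,\ \vek{e}_2-\vek{e}_4\rangle + \Sigma_5,
\]
and both $\vek{e}_2-\vek{e}_1$ and $\vek{e}_2-\vek{e}_4$ are minimal (neither is an $\N$-combination of the remaining generators), giving six minimal generators in $\Z^5$. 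In your notation this is $\vek{a}=\vek{e}_1+\vek{e}_4$, $k=4$, $l=2$: here $\tau(2)=1$ and the root $1\notin J=\{2,4,5\}$, exactly the failure mode above. The paper's own argument breaks at the same place: its proposed replacement generator $\vek{e}_1-\vek{e}_4$ does not lie in the cone, the proof having tacitly assumed $J=\{1,\ldots,n\}$ rather than merely $\Jd\supseteq\Id$.
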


\begin{proof}
By Lemma \ref{einschraenk} we may assume $\Jd$ contains all $x_m$ not in
$I = \cup_i I_i$.
Any vertex of $N(\Id)$ is of the form ${\bf p}=\sum_{i=1}^{s}{\bf
  e}_{k_i}$ with $k_i \in I_i$. By Lemma \ref{Lunimodular} the
minimal generators of $IT_{{\bf p}}(\Id)$ are either of the form ${\bf
e}_{l_i} - {\bf e}_{k_i}$ for some $l_i\in I_i$ or of the form $e_l$
and by assumption there are precisely $n$ of
them. We claim that the minimal generators of $IT_{{\bf p}+{\bf
e_j}}(\Id\cdot \Jd)$ can be obtained from the minimal generators
of $IT_{{\bf p}}(\Id)$ as follows: all minimal generators of the form
${\bf e}_l$ for $l\neq j$ are replaced by ${\bf e}_l - {\bf e}_j$ and
all others are left unchanged. Since by Lemma \ref{Ltangentenkegel} we have
\begin{equation}
IT_{{\bf e}_j}(\Jd) = {}_{\N}\langle {{\bf e}_j,\bf e}_{m\setminus j}
- {\bf e}_j: m\neq j \rangle\,,\quad{\rm and}\quad
IT_{{\bf p}+{\bf e_j}}(\Id\cdot \Jd)= IT_{{\bf p}}(\Id) + IT_{{\bf e}_j}(\Jd)
\,,
\end{equation}
all new generators are in $IT_{{\bf p}+{\bf e_j}}(\Id\cdot \Jd)$.
There are still $n$ minimal generators and thus it suffices to prove
the claim.

We first prove that ${\bf e}_j$ is a minimal generator of  $IT_{{\bf
    p}}(\Id)$ and hence a new minimal generator. If $j\notin I$, then
in the expansion of ${\bf e}_j$ in minimal generators no ${\bf
  e}_{l_i} - {\bf e}_{k_i}$ can occur and hence ${\bf e}_j$ is a
minimal generator. If $j\in I$, say $j\in I_1$, then we must have
$j=k_1$. Indeed, if not, then by Remark \ref{Rmk:summe} we see that ${\bf
  e}_{k_1}+{\bf e}_j$ is not a vertex of $N(\Id_1\cdot \Jd)$, but then
${\bf p}+{\bf e}_j$ is not a vertex of $N(\Id\cdot \Jd)$. If ${\bf
  e}_{k_1}$ is not minimal in $IT_{{\bf
    p}}(\Id)$, then ${\bf e}_{k_1} - {\bf e}_{k_i}$ must
be minimal for some $i$. But then $j=k_1\in I_i$ and thus $j=k_i$ so
that ${\bf e}_{k_1} - {\bf e}_{k_i}=0$. Hence ${\bf e}_j = {\bf e}_{k_1}$
is minimal.

As ${\bf e}_j$ is a new minimal generator, $IT_{{\bf p}}(\Id)$ is in
the span of the new minimal generators. Hence we only have to show
that all vectors ${\bf e}_a - {\bf e}_j$ are generated by the new
minimal generators. The expansion of ${\bf e}_a$ in the minimal
generators of $IT_{{\bf p}}(\Id)$ contains precisely one minimal
generator of the form ${\bf e}_l$ since else the norm
would not equal $1$. If $l=j$
we delete it from the expansion and if $l\neq j$ we substitute ${\bf
  e}_l$ with ${\bf e}_l -{\bf e}_j$. In both cases we are done.
\end{proof}

\begin{Kor} \label{Lenthalten}
Consider two coordinate ideals $\Id=( x_i: i \in I)$ and $\Jd=(x_j:
j \in J)$ in $K[{\bf x}]$ with $I \subseteq J$. Then $\Id \cdot \Jd$ is
tame.
\end{Kor}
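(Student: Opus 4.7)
The plan is to observe that this corollary is an immediate application of Proposition \ref{Senthalten} to the case $s=1$. First I would note that $I \subseteq J$ means every generator $x_i$ of $\Id$ lies in $\Jd$, so $\Id \subseteq \Jd$, i.e., $\Jd$ is a coordinate ideal containing $\Id$, which is exactly the hypothesis on the second factor in Proposition \ref{Senthalten}.

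Next I would recall that a single coordinate ideal is tame: this was already observed in the discussion following equation (\ref{ITkoord}), where it is shown that for any vertex $\vek{e}_i$ of $N(\Id)$ the ideal tangent cone
\begin{equation}
\IT{\vek{e}_i}{\Id}= {}_\N\langle \vek{e}_j - \vek{e}_i: j \in I \setminus i \rangle + \Sigma_n
\end{equation}
has exactly $n$ minimal generators and is therefore simplicial. By Theorem \ref{smoothness}(ii) this makes $\Id$ tame.

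Finally, writing $\Id$ as the (trivial) product of $s=1$ coordinate ideals, Proposition \ref{Senthalten} applied to $\Id$ (tame) and $\Jd$ (a coordinate ideal with $\Id \subseteq \Jd$) yields at once that $\Id \cdot \Jd$ is tame. There is no obstacle here — the statement is a direct specialization of the proposition, and the only thing to check is the trivial tameness of a single coordinate ideal.
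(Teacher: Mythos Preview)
Your proposal is correct and matches the paper's intent exactly: the paper states this as a corollary of Proposition~\ref{Senthalten} with no separate proof, and your argument---take $s=1$, note a single coordinate ideal is tame (as established just after~\eqref{ITkoord}), and observe $I\subseteq J$ gives $\Id\subseteq\Jd$---is precisely the intended specialization.
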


\begin{Snum} \label{Sprod2}
Let $\Id$ and $\Jd$ be two coordinate ideals in $K[x_1,\ldots,x_n]$
with clouds $I$ and $J$, respectively. Then the product $\Id\cdot \Jd$
is tame if and only if either $I\cap J=\emptyset$ or one of the clouds
is contained in the other. 
\end{Snum}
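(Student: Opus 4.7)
The plan is to handle the two implications separately. For the sufficient direction, if $I \cap J = \emptyset$ then the clouds $\{\vek{e}_i : i \in I\}$ and $\{\vek{e}_j : j \in J\}$ are supported on disjoint coordinate subspaces and are therefore transverse, so Proposition \ref{Spartition} yields that $\Id \cdot \Jd$ is tame. If $I \subseteq J$ (or symmetrically $J \subseteq I$), Corollary \ref{Lenthalten} applies directly.

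For the necessary direction I would argue the contrapositive: assume $I \cap J \neq \emptyset$ and that neither cloud is contained in the other, and pick pairwise distinct indices $i_0 \in I \cap J$, $i_1 \in I \setminus J$, $j_1 \in J \setminus I$. I would then consider the candidate vertex $\vek{p} = \vek{e}_{i_1} + \vek{e}_{j_1}$ of $N(\Id \cdot \Jd) = N(\Id) + N(\Jd)$ (Lemma \ref{L:summe}). Using the half-space description $N(\Id) = \{\vek{x} \in \R^n_+ : \sum_{i \in I} x_i \geq 1\}$, and analogously for $\Jd$, any decomposition $\vek{p} = \vek{a} + \vek{b}$ with $\vek{a} \in N(\Id)$, $\vek{b} \in N(\Jd)$ must satisfy $\vek{a}_{i_1} \geq 1$ and $\vek{b}_{j_1} \geq 1$ (because $i_1 \notin J$ and $j_1 \notin I$), which together with $\vek{a} + \vek{b} = \vek{p}$ forces $\vek{a} = \vek{e}_{i_1}$ and $\vek{b} = \vek{e}_{j_1}$. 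Since $\vek{e}_{i_1}$ and $\vek{e}_{j_1}$ are vertices of $N(\Id)$ and $N(\Jd)$, this unique decomposition makes $\vek{p}$ a vertex of $N(\Id \cdot \Jd)$.

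By Lemma \ref{Ltangentenkegel}, the ideal tangent cone $IT_{\vek{p}}(\Id \cdot \Jd)$ is generated by $\{\vek{e}_i - \vek{e}_{i_1} : i \in I \setminus i_1\} \cup \{\vek{e}_j - \vek{e}_{j_1} : j \in J \setminus j_1\} \cup \{\vek{e}_1, \ldots, \vek{e}_n\}$, and in particular contains both $\vek{e}_{i_0} - \vek{e}_{i_1}$ and $\vek{e}_{i_0} - \vek{e}_{j_1}$, which are distinct vectors pointing in direction $i_0$. By the pigeonhole argument preceding Proposition \ref{Senthalten}, it suffices to establish that both are minimal generators; for then $IT_{\vek{p}}(\Id \cdot \Jd)$ has at least $n+1$ minimal generators and fails to be simplicial, so Theorem \ref{smoothness}(ii) shows that the $\vek{p}$-chart is singular and $\Id \cdot \Jd$ is not tame.

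The main obstacle is the minimality claim. I plan to show that $\vek{e}_{i_0} - \vek{e}_{i_1}$ is minimal by assuming it equals a nontrivial $\N$-linear combination of the remaining generators and extracting a contradiction in three coordinate inspections. First, the norm $|\vek{v}| = \sum v_k$ vanishes on the left-hand side but equals $1$ on each standard basis vector, so no $\vek{e}_k$ can contribute. Second, reading off coordinate $j_1$ exploits $i_1 \notin J$ and $j_1 \notin I$: every generator $\vek{e}_j - \vek{e}_{j_1}$ has $j_1$-coordinate equal to $-1$, whereas no generator $\vek{e}_i - \vek{e}_{i_1}$ has a nonzero $j_1$-coordinate, so all coefficients in the second family must vanish. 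What remains is the equation $\vek{e}_{i_0} - \vek{e}_{i_1} = \sum_{i \in I \setminus \{i_0,i_1\}} \mu_i (\vek{e}_i - \vek{e}_{i_1})$; reading off coordinate $i_1$ yields $\sum \mu_i = 1$, so some $\mu_{i^*}$ equals $1$ and the others vanish, forcing $\vek{e}_{i_0} = \vek{e}_{i^*}$ with $i^* \in I \setminus \{i_0, i_1\}$, a contradiction. If $I = \{i_0, i_1\}$ the summation range is empty and the contradiction is immediate. The minimality of $\vek{e}_{i_0} - \vek{e}_{j_1}$ then follows by the symmetric argument, swapping the roles of $\Id$ and $\Jd$.
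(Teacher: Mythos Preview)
Your proof is correct and follows essentially the same route as the paper: the same cited lemmas for the sufficient direction, the same contrapositive with the same vertex $\vek{e}_{i_1}+\vek{e}_{j_1}$, and the same pair of obstructing generators $\vek{e}_{i_0}-\vek{e}_{i_1}$, $\vek{e}_{i_0}-\vek{e}_{j_1}$. The only notable differences are cosmetic: the paper first reduces to $I\cup J=\{1,\ldots,n\}$ via Lemma~\ref{einschraenk} (which you do not need), and its minimality check proceeds by ``subtract a generator and see that the result leaves the cone'' rather than your systematic coordinate elimination; conversely, you justify that $\vek{p}$ is actually a vertex, which the paper simply asserts.
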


\begin{proof}
We may assume $I\cup J = \left\{ 1,\ldots,n\right\}$. By
Lemma \ref{produktglatt} and Corollary \ref{Lenthalten} the {\it
  if}-part of the
proof is done. For the {\it only if}-part we remark that for two
subsets $I,J$ of $\left\{ 1,\ldots,n \right\}$ there are three
possibilities: either $(i)$ they are disjoint, or $(ii)$ one is contained
in the other, or $(iii)$ $I\setminus J$, $J\setminus I$ and $I\cap J$ are not
empty. We will show that in the third case there is a nonsimplicial
vertex in the Newton polyhedron.

We may assume $1\in I\setminus J$, $2\in J\setminus I$. By Lemmas
\ref{Ltangentenkegel} and \ref{Lunimodular} the ideal
tangent cone of
${\bf e}_1+{\bf e}_2$, which is a vertex, is given by
\begin{equation}
IT_{{\bf e}_1+{\bf e}_2}(\Id\cdot \Jd) = {}_{\N}\langle {\bf e}_r-{\bf e}_1,{\bf
  e}_s - {\bf e}_2,{\bf e}_1,{\bf e}_2: r\in I\setminus
\left\{1\right\},s\in J\setminus \left\{2\right\} \rangle \,.
\end{equation}
Choose $c\in I\cup J$, then we claim that ${\bf v}_1={\bf e}_c - {\bf
  e}_1$ and ${\bf v}_2={\bf e}_c - {\bf e}_2$ are both minimal and
thus $IT_{{\bf e}_1+{\bf e}_2}(\Id\cdot \Jd)$ is not
simplicial. If ${\bf v}_1$ would not be minimal we can subtract some
generator and the result is still in $IT_{{\bf e}_1+{\bf e}_2}(\Id\cdot
\Jd)$. Since $|{\bf v}_1|=0$ we cannot subtract ${\bf e}_1$ or ${\bf
  e}_2$. Since only the first and the second component of elements of
$IT_{{\bf e}_1+{\bf e}_2}(\Id\cdot \Jd)$ can be negative we can only subtract
${\bf v}_2$ to obtain ${\bf e}_2 - {\bf e}_1$. But $|{\bf e}_2 - {\bf
  e}_1|=0$ and the only generator with positive second entry is ${\bf
  e}_2$; thus ${\bf e}_2 - {\bf e}_1$ is not in $IT_{{\bf
    e}_1+{\bf e}_2}(\Id\cdot \Jd)$. Thus ${\bf v}_1$ is minimal and a similar
argument shows ${\bf v}_2$ is minimal.
\end{proof}

For three coordinate ideals we can determine explicitly
the conditions that have to be satisfied in order that the product is
tame. We already know that when all three have disjoint clouds, that
the product is smooth. Also, in the case where one cloud is contained
in another and the third cloud is disjoint from these, then the
product is tame. The remaining possibilities are dealt with below.

\begin{Lem}\label{Dreier}
Let $\Id$, $\Jd$ and $\ms{K}$ be three monomial coordinate ideals in
$K[x_1,\ldots,x_n]$ with clouds $I,J$ and $K$ respectively. Suppose
that $I\cup J = K$, then the product $\Id\cdot \Jd \cdot \ms{K}$ is
tame.
\end{Lem}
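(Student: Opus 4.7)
The plan is to apply the smoothness criterion of Theorem~\ref{smoothness}: I must show that the ideal tangent cone at every vertex of $N(\Id\cdot\Jd\cdot\ms{K})$ is simplicial. By Lemma~\ref{L:summe} we have $N(\Id\cdot\Jd\cdot\ms{K})=N(\Id)+N(\Jd)+N(\ms{K})$, and iterating the argument of Remark~\ref{Rmk:summe} to three summands (e.g.\ by regrouping as $N(\Id)+N(\Jd\cdot\ms{K})$) shows that a point is a vertex of this Minkowski sum precisely when it admits a unique decomposition as a sum of one vertex from each summand. The vertices of $N$ of a coordinate ideal are the standard basis vectors indexed by the cloud, so the candidate vertices are the points $\vek{e}_i+\vek{e}_j+\vek{e}_k$ with $i\in I$, $j\in J$, $k\in K$.

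First I would enumerate those candidates with a unique decomposition. Setting $L=I\cap J$ and using $K=I\cup J$, a case analysis on the support of the candidate yields three families: (A) $3\vek{e}_l$ with $l\in L$; (B) $2\vek{e}_l+\vek{e}_m$ with $l\in I\setminus J$ and $m\in J$; and (C) $2\vek{e}_l+\vek{e}_m$ with $l\in J\setminus I$ and $m\in I$. Any other candidate either fails to be realized as such a sum at all (not in the polyhedron), or multiple realizations exist (the equality $I\cup J=K$ creates alternate decompositions via the $\ms{K}$-slot), so it is not a vertex.

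Next I would use Lemma~\ref{Ltangentenkegel} to write $IT_{\vek{v}}(\Id\cdot\Jd\cdot\ms{K})$ as the sum of the three factor tangent cones at the unique decomposition of $\vek{v}$, and exhibit $n$ minimal generators in each case. Case (A) is immediate: since $I\cup J=K$, every generator of $IT_{\vek{e}_l}(\Id)$ and $IT_{\vek{e}_l}(\Jd)$ already appears in $IT_{\vek{e}_l}(\ms{K})$, so the sum collapses to $IT_{\vek{e}_l}(\ms{K})$, which is simplicial by~\eqref{ITkoord}. Case (C) is Case (B) after swapping $I\leftrightarrow J$, so the work concentrates on (B). Using $I\subseteq K$ to absorb the $\Id$-summand into the $\ms{K}$-summand, the tangent cone at $\vek{v}=2\vek{e}_l+\vek{e}_m$ becomes
\[
IT_{\vek{v}}={}_\N\langle\vek{e}_k-\vek{e}_l:k\in K\setminus l\rangle+{}_\N\langle\vek{e}_j-\vek{e}_m:j\in J\setminus m\rangle+\Sigma_n.
\]
I claim its minimal generating set consists of $\vek{e}_l$, $\vek{e}_m-\vek{e}_l$, $\vek{e}_k-\vek{e}_l$ for $k\in(I\setminus J)\setminus l$, $\vek{e}_j-\vek{e}_m$ for $j\in J\setminus m$, and $\vek{e}_r$ for $r\notin K$. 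Redundancy of the remaining generators comes from $\vek{e}_k-\vek{e}_l=(\vek{e}_k-\vek{e}_m)+(\vek{e}_m-\vek{e}_l)$ for $k\in J\setminus m$, and from $\vek{e}_r=(\vek{e}_r-\vek{e}_l)+\vek{e}_l$ for $r\in K\setminus l$ (then combined with the previous absorption). Minimality of the proposed generators is a coordinate-wise sign check: a negative $l$- (resp.\ $m$-)component can only be produced by generators of the form $\vek{e}_\bullet-\vek{e}_l$ (resp.\ $\vek{e}_\bullet-\vek{e}_m$), so no non-trivial $\N$-relation among the listed vectors exists. Counting gives
\[
1+1+(|I\setminus J|-1)+(|J|-1)+(n-|K|)=|I\cup J|+n-|K|=n,
\]
using $K=I\cup J$; hence the cone is simplicial.

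The main obstacle is the bookkeeping of case (B): one must simultaneously verify that the proposed generators are minimal (the sign argument) and that all others are absorbed, and then check that the count lands exactly at $n$—this hinges essentially on the defining hypothesis $K=I\cup J$. Once every vertex has been shown to have a simplicial ideal tangent cone, Theorem~\ref{smoothness}(ii) concludes that every chart of the blowup is smooth, so $\Id\cdot\Jd\cdot\ms{K}$ is tame.
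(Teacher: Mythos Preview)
Your proof is correct. Both you and the paper apply Theorem~\ref{smoothness}, use Lemma~\ref{Ltangentenkegel} to decompose the ideal tangent cone, and exploit Remark~\ref{Rmk:summe} to constrain the vertex decompositions; the underlying structure is the same. The execution, however, differs. You carry out an explicit enumeration: classify the vertices into types (A), (B), (C), exhibit a concrete minimal generating set in each case, and count to $n$. The paper instead argues by contradiction at a generic vertex $\vek{v}=\nu(I)+\nu(J)+\nu(K)$: Remark~\ref{Rmk:summe} forces (say) $\nu(K)=\nu(I)$, whence $\nu(J)-\nu(I)\in IT_{\vek{v}}$; if two minimal generators $\vek{e}_a-\nu(I)$ and $\vek{e}_a-\nu(J)$ existed, the identity $\vek{e}_a-\nu(I)=(\vek{e}_a-\nu(J))+(\nu(J)-\nu(I))$ would contradict minimality. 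This avoids listing vertices or generators and handles your cases (A)--(C) uniformly. Your approach has the advantage of making the generating set completely explicit (useful if one later wants chart coordinates), while the paper's argument is shorter and more easily generalizes, as you will see in the subsequent lemmas on building sets. One small remark: your appeal to Remark~\ref{Rmk:summe} for the biconditional ``vertex $\Leftrightarrow$ unique decomposition'' is a bit loose---the remark only gives a special case---but you only actually need the forward direction (every vertex has a unique decomposition), and for the converse, showing the ideal tangent cone is simplicial a fortiori shows it is pointed, hence the point is a vertex.
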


\begin{proof}
If $I\subset J$, then $J=K$ and the product is tame by Corollary
\ref{Lenthalten}. Hence we assume $I\nsubseteq J$ and $J\nsubseteq I$. Let
${\bf v}$ be a vertex of $N=N(\Id\cdot \Jd \cdot \ms{K})$. Then ${\bf
  v} = \nu(I)+\nu(J)+\nu(K)$, where $\nu(I)$, $\nu(J)$ and $\nu(K)$
are vertices of the Newton polyhedra of $\Id$, $\Jd$ and $\ms{K}$,
respectively. By Remark \ref{Rmk:summe} we need that $\nu(I)=\nu(K)$ or
$\nu(J) = \nu(K)$; if $\nu(K)\in I$, then $\nu(K)+\nu(I) =
\tfrac{1}{2}(2\nu(K)) + \tfrac{1}{2}(2\nu(I))$. Hence we may assume
$\nu(K) = \nu(I)$. But then it follows that $\nu(J) - \nu(I)$ is in
$IT_{{\bf v}}(\Id\cdot \Jd \cdot \ms{K})$. If $IT_{{\bf v}}(\Id\cdot
\Jd \cdot \ms{K})$ is not simplicial, there is an index $1\leq i\leq
n$ such that there are two minimal generators ${\bf e}_i - {\bf
  e}_{k}$ and ${\bf e}_{i} - {\bf e}_{l}$ for two different indices
$1\leq k , l \leq n$. But the only possible choices $k$ and $l$ are
$\nu(I)$ and $\nu(J)$, in case of which ${\bf e}_i - \nu(I) = {\bf
  e}_i - \nu(J) + \nu(J) - \nu(I)$. Hence no $i$ can exist for which
${\bf e}_i - \nu(I)$ and ${\bf e}_i - \nu(J)$ are minimal generators
and $IT_{{\bf v}}(\Id\cdot \Jd \cdot \ms{K})$ is simplicial.
\end{proof}

\begin{Lem}
Let $\Id,\Jd$ and $\ms{K}$ be three monomial coordinate ideals
in $K[x_1,\ldots,x_n]$ with clouds $I,J$ and $K$, respectively. When
the inclusions $I\subset J\cup K$, $J\subset K\cup I$ and $K\subset
I\cup J$ hold, then the product $\Id\cdot \Jd \cdot \ms{K}$ is tame.
\end{Lem}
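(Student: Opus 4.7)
The plan is to apply the smoothness criterion of Theorem \ref{smoothness} by classifying the vertices of $N=N(\Id\cdot\Jd\cdot\ms{K})$ and showing that the ideal tangent cone at each is simplicial. By Lemma \ref{L:summe} every vertex $\vek{v}$ has the form $\vek{v}=\vek{e}_i+\vek{e}_j+\vek{e}_k$ with $i\in I$, $j\in J$, $k\in K$, and is a vertex exactly when this decomposition is unique. I introduce the partition $A=I\cap J\cap K$, $B=(J\cap K)\setminus I$, $C=(I\cap K)\setminus J$, $D=(I\cap J)\setminus K$; the three hypotheses say precisely that every element of $I\cup J\cup K$ belongs to at least two of the clouds, so $I\cup J\cup K=A\cup B\cup C\cup D$ and $I\cup J=I\cup K=J\cup K=I\cup J\cup K$.

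The first step is to pin down the vertices. Applying Remark \ref{Rmk:summe} pairwise as in the proof of Lemma \ref{Dreier}, whenever two indices among $\{i,j,k\}$ are distinct and both lie in a common pairwise intersection (for instance $i\neq j$ and $i,j\in I\cap J=A\cup D$), $\vek{v}$ is not a vertex. An elementary case check shows that for pairwise distinct $i,j,k$ only the cyclic configurations survive: up to relabeling $\Id,\Jd,\ms{K}$ we may assume $i\in C$, $j\in D$, $k\in B$. But then $(j,k,i)$ also lies in $I\times J\times K$ (since $j\in I$, $k\in J$, $i\in K$) and sums to the same $\vek{v}$, so the decomposition is not unique and $\vek{v}$ again fails to be a vertex. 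Hence at least two of $i,j,k$ coincide, and analyzing when the resulting decomposition is unique yields that the vertices of $N$ are exactly
\[
3\vek{e}_p\ (p\in A),\qquad 2\vek{e}_p+\vek{e}_q\ \text{with}\ (p,q)\in(D\times K)\cup(C\times J)\cup(B\times I),\ p\neq q.
\]

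For each vertex I then use Lemma \ref{Ltangentenkegel} to express $\IT{\vek{v}}{\Id\cdot\Jd\cdot\ms{K}}$ as the sum of the three individual ideal tangent cones and exhibit $n$ of its generators forming a $\Z$-basis of $\Z^n$; such a basis generates the cone as a monoid, and by its $\Z$-linear independence it is automatically the minimal generating system, so the cone is simplicial. For $\vek{v}=3\vek{e}_p$ with $p\in A$ the basis is
\[
\{\vek{e}_p\}\cup\{\vek{e}_r-\vek{e}_p:r\in(I\cup J\cup K)\setminus\{p\}\}\cup\{\vek{e}_s:s\notin I\cup J\cup K\}.
\]
For $\vek{v}=2\vek{e}_p+\vek{e}_q$ with $p\in D$, $q\in K$ the inclusion $K\subset I\cup J$ allows one to merge the generators coming from $\Id$ and $\Jd$ into $\{\vek{e}_\alpha-\vek{e}_p:\alpha\in(I\cup J\cup K)\setminus\{p\}\}$, and the basis
\[
\{\vek{e}_p,\vek{e}_q-\vek{e}_p\}\cup\{\vek{e}_t-\vek{e}_q:t\in K\setminus\{q\}\}\cup\{\vek{e}_\alpha-\vek{e}_p:\alpha\in D\setminus\{p\}\}\cup\{\vek{e}_s:s\notin I\cup J\cup K\}
\]
has cardinality $n$; the remaining generators of $\IT{\vek{v}}{\Id\cdot\Jd\cdot\ms{K}}$ are $\N$-combinations of it via telescoping identities such as $\vek{e}_\alpha-\vek{e}_p=(\vek{e}_\alpha-\vek{e}_q)+(\vek{e}_q-\vek{e}_p)$ for $\alpha\in K$. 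The two remaining sub-cases $(p,q)\in C\times J$ and $(p,q)\in B\times I$ follow analogously after cyclically permuting the roles of $\Id,\Jd,\ms{K}$.

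The main obstacle is the exclusion of the cyclic triple-distinct configuration in the vertex classification: none of the three pairwise instances of Remark \ref{Rmk:summe} applies to it, so one has to produce the alternative decomposition $(j,k,i)$ explicitly. Once the vertex list is pinned down, the simpliciality check at each vertex reduces to writing down the basis above and verifying the telescoping relations.
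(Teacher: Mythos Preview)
Your argument is correct. The underlying mechanism is the same as the paper's---both proofs hinge on the observation that at a genuine vertex two of the three chosen basis vectors must coincide, and that the third one is linked to the common one by a difference vector lying in the ideal tangent cone---but the presentations differ.

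The paper never introduces the partition $A,B,C,D$ and never enumerates the vertices. It fixes a vertex $\vek{v}=\nu(I)+\nu(J)+\nu(K)$, assumes the three are distinct, and uses the inclusions $I\subset J\cup K$, etc.\ cyclically to force both $\nu(I)-\nu(J)$ and $\nu(J)-\nu(I)$ into $IT_{\vek{v}}$, contradicting pointedness. In the two-coincide case it then argues simpliciality by contradiction: any pair of minimal generators $\vek{w}-\nu(I)$, $\vek{w}-\nu(K)$ would violate minimality because $\nu(K)-\nu(I)\in IT_{\vek{v}}$. Your route is more constructive: you classify the vertices explicitly via the regions $B,C,D,A$ and write down an explicit $\Z$-basis of $n$ vectors for each ideal tangent cone. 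This buys you an explicit description of every chart of the blowup, at the cost of a longer case analysis; the paper's contradiction argument is terser but less informative. One small point: your sentence ``is a vertex exactly when this decomposition is unique'' is stronger than what you actually need or prove---you only use (and justify, via Remark~\ref{Rmk:summe} and the alternative decomposition $(j,k,i)$) the direction that non-uniqueness rules out being a vertex, and that suffices.
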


\begin{proof}
If $I\subset J$, then it follows that $K\subset J$ and thus $J=K\cup I$, in
 which case the product is tame by Lemma \ref{Dreier}. Hence for the remainder
we assume that no cloud is contained in another cloud.

Let ${\bf v}$ be a vertex of the Newton polyhedron of $\Id\cdot \Jd
\cdot \ms{K}$. Then ${\bf v} = \nu(I)+\nu(J)+\nu(K)$, where $\nu(I)$,
$\nu(J)$ and $\nu(K)$ denote vertices of the Newton polyhedron of
$\Id$, $\Jd$ and $\ms{K}$, respectively. The ideal tangent cone of
$\Id\cdot \Jd \cdot \ms{K}$ at ${\bf v}$ is the sum
\begin{equation}
IT_{{\bf v}}(\Id\cdot \Jd \cdot \ms{K}) = IT_{\nu(I)}(\Id) +
IT_{\nu(J)}(\Jd) + IT_{\nu(K)}(\ms{K})\,.
\end{equation}
First assume that $\nu(I)$, $\nu(J)$ and $\nu(K)$ are distinct. We may
assume $\nu(I)$ is contained in $J$. Then $\nu(I)-\nu(J)\in
IT_{\nu(J)}(\Jd)$ and thus $\nu(I)-\nu(J) \in IT_{{\bf v}}(\Id\cdot
\Jd \cdot \ms{K})$. If $\nu(J)\in I$ then also $\nu(J) - \nu(I)\in
IT_{{\bf v}}(\Id\cdot \Jd \cdot \ms{K})$ and ${\bf v}$ is not a
vertex. Hence we need $\nu(J)\subset K\setminus I$ and by the same
reasoning $\nu(K)\in I\setminus J$. But then the vectors $\nu(J) -
\nu(K)$ and $\nu(K) - \nu(I)$ and thus their sum $\nu(J) - \nu(I)$ are
contained in $IT_{{\bf v}}(\Id\cdot \Jd \cdot \ms{K})$ and ${\bf v}$
is not a vertex. Hence $\nu(I)$, $\nu(J)$ and $\nu(K)$ cannot be
distinct.

We may assume $\nu(I) = \nu(J)$ and $\nu(K)\in I$, so that $\nu(K) -
\nu(I)$ is in $IT_{{\bf v}}(\Id\cdot \Jd \cdot \ms{K})$. If $IT_{{\bf
v}}(\Id\cdot \Jd \cdot \ms{K})$ is not simplicial then there must
be two minimal generators of the form ${\bf w} - \nu(I)$ and ${\bf w}
-\nu(K)$ for some ${\bf w}\in \mathbb{Z}^n$, but ${\bf w} - \nu(I)=
{\bf w} - \nu(K ) + \nu(K) - \nu(I)$. Hence $IT_{{\bf v}}(\Id\cdot \Jd
\cdot \ms{K})$ is simplicial.
\end{proof}

Explicit examples show that the above treated criteria are
exhaustive: for any three subsets $I$, $J$, and $K$ of
$\{1,\ldots,n\}$ not satisfying any of the criteria
\begin{itemize}
\item[(i)] $I$, $J$ and $K$ are disjoint,
\item[(ii)] $I\subset J$ and $J\cap K=\emptyset$,
\item[(iii)] $I\subset J\cup K$, $I\subset J\cup K$ and $I\subset
  J\cup K$,
\item[(iv)] $K = I\cup J$,
\end{itemize}
one can find examples of three ideals such that the
product is not tame.

We close this section with a result that will play a role when we
discuss permutohedral blowups and provides a smoothing procedure in Proposition
\ref{propSmooth}.

\begin{Lem} \label{LpaarweiseVereinigung}
Let $\Id_1,\ldots,\Id_N$ be coordinate ideals in
$K[x_1,\ldots,x_n]$. Then the ideal $\Id=\prod_{i<j}(\Id_i+\Id_j)$ is
smooth.
\end{Lem}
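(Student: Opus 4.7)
The plan is to apply Theorem~\ref{smoothness}: I only need to verify that the ideal tangent cone $\IT{{\bf v}}{\Id}$ is simplicial at every vertex ${\bf v}$ of $N(\Id)$. I would first describe the vertices: each ${\bf v}$ admits a unique Minkowski decomposition ${\bf v}=\sum_{i<j}{\bf e}_{k_{ij}}$ with $k_{ij}\in I_i\cup I_j$, and iterating Lemma~\ref{Ltangentenkegel} gives
\[
\IT{{\bf v}}{\Id}={}_\N\langle {\bf e}_l-{\bf e}_{k_{ij}} : i<j,\ l\in (I_i\cup I_j)\setminus\{k_{ij}\}\rangle+\Sigma_n.
\]
I would then encode these generators in a directed graph $G$ on $\{1,\ldots,n\}$, placing an arrow $k\to l$ whenever some pair $(i,j)$ has $k_{ij}=k$ and $l\in I_i\cup I_j$. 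Because $\IT{{\bf v}}{\Id}$ is pointed at a vertex, $G$ is acyclic and its reachability relation $\prec$ is a strict partial order. Using Lemma~\ref{Lunimodular} I would read off: a minimal generator in direction $\beta$ is either ${\bf e}_\beta$ (when $\beta$ has no $\prec$-predecessor) or ${\bf e}_\beta-{\bf e}_k$ for $k$ an immediate $\prec$-predecessor of $\beta$; such $k$ must be a direct $G$-neighbor of $\beta$, since any longer path would supply a strict intermediate.

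The main obstacle will be the combinatorial claim that every $\beta$ has \emph{at most one} immediate $\prec$-predecessor. I plan to argue by contradiction: suppose $k_1\neq k_2$ are both immediate predecessors of $\beta$, witnessed by direct edges via pairs $(i_1,j_1)$ and $(i_2,j_2)$ with $k_{i_sj_s}=k_s$ and $\beta\in I_{i_s}\cup I_{j_s}$. After swapping $i_s$ and $j_s$ if necessary, I may assume $k_s\in I_{i_s}$. If $i_1=i_2$, each of $k_1,k_2$ lies in the cloud of the other witness pair, producing both edges $k_1\to k_2$ and $k_2\to k_1$ and contradicting acyclicity of $G$. If $i_1\neq i_2$, consider the pair $\{i_1,i_2\}$, whose cloud contains both $k_1$ and $k_2$, and call its label $k'$. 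The cases $k'=k_1$ or $k'=k_2$ give $k_1\prec k_2$ or $k_2\prec k_1$ directly, contradicting immediacy of one of them. Otherwise $k'\notin\{k_1,k_2\}$, and applying Remark~\ref{Rmk:summe} to the pairs $(i_1,j_1)$ versus $\{i_1,i_2\}$ and to $(i_2,j_2)$ versus $\{i_1,i_2\}$ forces $k'\notin I_{i_1}\cup I_{j_1}$ and $k'\notin I_{i_2}\cup I_{j_2}$, hence $k'\notin I_{i_1}$ and $k'\notin I_{i_2}$, contradicting $k'\in I_{i_1}\cup I_{i_2}$.

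Once the key claim is established, $\IT{{\bf v}}{\Id}$ has exactly $n$ minimal generators, one per direction. A topological sort of $\{1,\ldots,n\}$ along $\prec$ arranges them as rows of a lower-unitriangular matrix, so they form a $\Z$-basis and the ideal tangent cone is simplicial. Theorem~\ref{smoothness}(ii) then makes every vertex chart smooth, and by part (i) these charts cover $\widetilde{\A}^n$, so $\Id$ is tame.
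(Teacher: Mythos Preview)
Your proof is correct and follows essentially the same approach as the paper: both fix a vertex decomposition ${\bf v}=\sum_{i<j}{\bf e}_{k_{ij}}$, compute $IT_{\bf v}(\Id)$ via Lemma~\ref{Ltangentenkegel}, and derive a contradiction from two minimal generators in the same direction by looking at the factor $\Id_{i_1}+\Id_{i_2}$ and invoking Remark~\ref{Rmk:summe}. The only difference is bookkeeping---you package the combinatorics in a directed graph and its reachability order $\prec$, while the paper uses auxiliary functions $w$ and $\nu$ recording which summand each $k_{ij}$ comes from---but the decisive step (your case $i_1\neq i_2$, the paper's choice of the pair $(k,k')$) is the same.
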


\begin{proof}
We write $I_i$ for the cloud of $\Id_i$, $P_i$ for the Newton
polyhedron of $\Id_i$ and $P$ for the Newton polyhedron of $\Id$.
We introduce the set $\Omega$ of unordered pairs $(i,j)$, where $i$
and $j$ are distinct integers running from $1$ to $N$.

We fix a vertex ${\bf a}$ of
$P$; ${\bf a}$ is of the
form $\sum_{i<j}{\bf e}_{k_{ij}}$, where $k_{ij}$ is in the cloud of
$\Id_i+\Id_j$, and hence in the cloud of $I_i$ of in the cloud of
$I_j$ (or in both). The ideal tangent cone $IT_{{\bf a}}(\Id)$ is the
sum of the cones $IT_{{\bf e}_{k_{ij}}}(\Id_i+\Id_j)$, which is
generated by all vectors of the form ${\bf e}_m - {\bf e}_{k_{ij}}$,
where $m$ runs over all elements in $I_i\cup I_j$, and all standard
basis vectors.

We construct a function
$w:\Omega\to\left\{1,\ldots,N\right\}$ as follows: for $(i,j)$ we put
$w((i,j))=i$ if $k_{ij}$ is in $I_i$ and $w((i,j))=j$ if $k_{ij}\in
I_j$ and when $k_{ij}\in I_i\cap I_j$ we make a choice - for example
$w((i,j))={\rm min}(i,j)$. The function $w$ describes from which
Newton polyhedron we have a contribution to ${\bf a}$. We call $W$ the
graph of $w$ and we write $w_{ij}=w((i,j))$. Next we define a function
$\nu: W\to \left\{{\bf e}_1,\ldots,{\bf e}_n\right\}$ as follows: we assign
$(i,j,w_{ij})\in W$ the basis vector ${\bf e}_{k_{ij}}$ from the expansion
of ${\bf a}$. We now claim that when there are different $k$
and $l$ such that $(1,k,1)$ and $(1,l,1)$ are in $W$, then
$\nu(1,k,1)=\nu(1,l,1)$. Suppose ${\bf e}_r=\nu(1,k,1)$ and ${\bf
  e}_s=\nu(1,l,1)$ are different, then ${\bf e}_r+{\bf e}_s =
\tfrac{1}{2}(2{\bf e}_r) + \tfrac{1}{2}(2{\bf e}_s)$ is not a vertex of
$(P_1\cup P_k)+(P_1\cup P_l)$, which is the Newton polyhedron of
$(\Id_1+\Id_k)(\Id_1+\Id_l)$. But then ${\bf a}={\bf e}_r+{\bf e}_s +
\ldots$ cannot be a vertex of $\Id$. We can write for ${\bf a}$
\begin{equation}
{\bf a}= \sum_{k\neq 1}\nu(1,k,1) + \sum_{k\neq 2} \nu(2,k,2) +
\ldots+\sum_{k\neq N}\nu(N,k,N)\,.
\end{equation}
When $w(i,j)=i$ and $\nu(i,j,i)={\bf e}_r$, then the ideal tangent
cone $IT_{{\bf a}}(\Id)$ contains generators ${\bf e}_m - {\bf e}_r$,
where $m$ runs over all points in $I_i\cup I_j$. Conversely, when
${\bf e}_m - {\bf e}_r$ is a minimal generator in $IT_{{\bf a}}(\Id)$,
then ${{\bf e}}_r=\nu(i,j,w_{ij})$ for some $i,j$.

Assume that $IT_{{\bf a}}(\Id)$ is not smooth. Then there is a
direction $i$ for which there are too many
minimal generators. We may assume that ${\bf e}_i - {\bf e}_1$ and
${\bf e}_i - {\bf e}_2$ are two minimal generators. Thus $1$ and $2$
are in the image of $w$. Then there are $k,k',l,l'$ such that ${\bf
  e}_1=\nu(k,l,k)$ and ${\bf e}_2=\nu(k',l',k')$ and $k\neq
k'$. We may assume $w(k,k')=k$ so that $\nu(k,k',k)={\bf e}_1$. Both
${\bf e}_1$ and ${\bf e}_2$ are in the Newton polyhedron of $\Id_k +
\Id_{k'}$ and ${\bf e}_{2} - {\bf e}_1$ is in the ideal tangent cone
of ${\bf a}$, which contradicts that both ${\bf e}_i - {\bf e}_1$ and
${\bf e}_i - {\bf e}_2$ are minimal generators.
\end{proof}

\begin{Kor} \label{Lganzesbs}
Let $\Id$ be the monomial ideal from Lemma \ref{LpaarweiseVereinigung}
and $\Jd = \prod_{i=1}^{k} \Id_i$. Then $\Id \cdot \Jd$ is tame.
\end{Kor}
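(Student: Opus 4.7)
The plan is to apply Theorem \ref{smoothness} vertex by vertex to $N(\Id\cdot\Jd)$, extending the argument of Lemma \ref{LpaarweiseVereinigung} to account for the additional factors from $\Jd$. Writing $P_m=N(\Id_m)$ and $P_{ij}=N(\Id_i+\Id_j)$, Lemma \ref{L:summe} gives $N(\Id\cdot\Jd)=\sum_m P_m+\sum_{i<j}P_{ij}$, and by the uniqueness of vertex decompositions in Minkowski sums, every vertex ${\bf a}$ takes the form ${\bf a}=\sum_m {\bf e}_{l_m}+\sum_{i<j}{\bf e}_{k_{ij}}$ with $l_m\in I_m$ and $k_{ij}\in I_i\cup I_j$. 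Lemma \ref{Ltangentenkegel} then yields
$$IT_{{\bf a}}(\Id\cdot\Jd)=\sum_m IT_{{\bf e}_{l_m}}(\Id_m)+\sum_{i<j}IT_{{\bf e}_{k_{ij}}}(\Id_i+\Id_j).$$

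Next I would establish a \emph{matching property}: whenever $\Id_i$ appears as a factor of $\Jd$ and $k_{ij}\in I_i$, one necessarily has $k_{ij}=l_i$, and symmetrically in $j$. Indeed, under this hypothesis both ${\bf e}_{k_{ij}}$ and ${\bf e}_{l_i}$ lie in $P_i\cap P_{ij}$, so if they were distinct, Remark \ref{Rmk:summe} would prevent their sum from being a vertex of $P_i+P_{ij}$, contradicting the vertex property of ${\bf a}$ in the full Minkowski sum. This is the precise analogue of the function $w$ constructed in the proof of Lemma \ref{LpaarweiseVereinigung}, and it forces every anchor $k_{ij}$ to coincide with one of the $l_m$.

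The closing step then mirrors the proof of Lemma \ref{LpaarweiseVereinigung}. Assume for contradiction that $IT_{{\bf a}}(\Id\cdot\Jd)$ is not simplicial; then some direction $r$ carries two minimal generators ${\bf e}_r-{\bf e}_{\alpha_1}$ and ${\bf e}_r-{\bf e}_{\alpha_2}$ with $\alpha_1\neq\alpha_2$. The matching property pins these anchors to points of the form $l_{m_1},l_{m_2}$ with $m_1\neq m_2$. Applying the matching property to the pair $(m_1,m_2)$, one may assume $k_{m_1 m_2}=l_{m_1}$, so that $IT_{{\bf e}_{l_{m_1}}}(\Id_{m_1}+\Id_{m_2})$ contains ${\bf e}_{l_{m_2}}-{\bf e}_{l_{m_1}}$. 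The identity
$${\bf e}_r-{\bf e}_{l_{m_1}}=({\bf e}_r-{\bf e}_{l_{m_2}})+({\bf e}_{l_{m_2}}-{\bf e}_{l_{m_1}})$$
then exhibits ${\bf e}_r-{\bf e}_{l_{m_1}}$ as a nontrivial $\N$-combination of two elements of $IT_{{\bf a}}(\Id\cdot\Jd)$, contradicting its minimality. Hence every vertex has simplicial ideal tangent cone and $\Id\cdot\Jd$ is tame by Theorem \ref{smoothness}.

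The main obstacle I expect is in setting up the matching property cleanly, namely verifying that it genuinely pins down the anchors of \emph{all} minimal generators and dealing with the edge cases where a pair-index $(i,j)$ falls outside the range of $\Jd$'s factors (so no $l_i$ is available to match against). Once that bookkeeping is handled, the combinatorics of the final contradiction is essentially a transcription of the end of the proof of Lemma \ref{LpaarweiseVereinigung}.
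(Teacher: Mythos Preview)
Your argument is correct, but the paper takes a much shorter route. Instead of re-running the proof of Lemma~\ref{LpaarweiseVereinigung} with the extra factors from $\Jd$, the paper simply introduces an additional ideal $\Id_{k+1}=(0)$ and observes that
\[
\Id\cdot\Jd \;=\; \prod_{1\le i<j\le k}(\Id_i+\Id_j)\cdot\prod_{i=1}^{k}\Id_i
\;=\; \prod_{1\le i<j\le k+1}(\Id_i+\Id_j),
\]
since $\Id_i+\Id_{k+1}=\Id_i$. Lemma~\ref{LpaarweiseVereinigung} then applies directly to the $k+1$ ideals $\Id_1,\ldots,\Id_k,(0)$, and the corollary follows in one line. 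Your approach buys a self-contained verification and makes the role of the extra anchors $l_m$ explicit, but the paper's trick avoids all of that bookkeeping entirely.

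Two minor remarks on your write-up. First, the ``edge case'' you flag does not actually occur: in the corollary the product $\Jd$ runs over \emph{all} the $\Id_i$ used to form $\Id$ (the $N$ of the lemma equals the $k$ of the corollary), so every pair-index $(i,j)$ has both $l_i$ and $l_j$ available, and your matching property applies uniformly. Second, once you note that every $k_{ij}$ equals some $l_m$, the closing contradiction is exactly the one in Lemma~\ref{LpaarweiseVereinigung}; you could even phrase your proof as ``the generators coming from $IT_{{\bf e}_{l_m}}(\Id_m)$ are already absorbed into $IT_{{\bf e}_{k_{ij}}}(\Id_i+\Id_j)$ by the matching property, hence $IT_{\bf a}(\Id\cdot\Jd)$ coincides with an ideal tangent cone of the type handled in the lemma.'' That observation is morally the same as the paper's $(0)$-trick.
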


\begin{proof}
Take the ideal $\Id_{k+1}=(0)$ as the $(k+1)$th factor in the product. With
Lemma \ref{LpaarweiseVereinigung} it then follows that $\Id \cdot
\Jd=\prod_{1\leq i<j\leq k+1} (\Id_{i}+\Id_j)$ is tame.
\end{proof}

\begin{ex}\label{Bsp.planes.perm}
Consider the monomial ideals $\Id_i = (x_i)$ in $K[x_1,\ldots,x_n]$. By Lemma
\ref{LpaarweiseVereinigung} the ideal $\prod_{i<j}(x_i , x_j)$ is
tame. The ideal $\Id$ is symmetric under the action of the symmetric
group on $\{1,\ldots,n\}$. In Section \ref{SECpermutation} we will
discuss a wider class of such symmetric ideals and prove that they are tame.
\end{ex}

Given any set of coordinate ideals $\Id_1,\ldots , \Id_k$ in $K[{\bf
  x}]$, one easily sees that $V(\Id_i + \Id_j) \subset
V(\prod_{l=1}^{k}\Id_l)$. It follows that we have
\begin{equation}
V\Bigl(\prod_{i=1}^{k}\Id_i \cdot \prod_{1\leq i<j\leq k}(\Id_i +
\Id_j)\Bigr)=V\Bigl(\prod_{i=1}^{k}\Id_i\Bigr)\,. 
\end{equation}
Hence we arrive at the following smoothing procedure:

\begin{Snum}\label{propSmooth}
Suppose that the product $\prod_{i=1}^{k}\Id_i$ of coordinate ideals
$\Id_1,\ldots, \Id_k$ in $K[{\bf x}]$ is not tame, then the ideal 
\[
\prod_{i=1}^{k}\Id_i \cdot \prod_{1\leq i<j\leq k}(\Id_i + \Id_j)
\]
has the same zero set as $\prod_{i=1}^{k}\Id_i$ and is tame.
\end{Snum}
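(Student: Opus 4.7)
The plan is to observe that this proposition is essentially a packaging of the two preceding results together with the elementary fact about zero sets of products.

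First I would handle the zero set equality. Using that $V(\ms{A}\cdot\ms{B}) = V(\ms{A})\cup V(\ms{B})$ for any ideals $\ms{A}, \ms{B}$, one gets
\[
V\Bigl(\prod_{i=1}^{k}\Id_i \cdot \prod_{1\leq i<j\leq k}(\Id_i + \Id_j)\Bigr) = V\Bigl(\prod_{i=1}^{k}\Id_i\Bigr) \cup \bigcup_{1\leq i<j\leq k} V(\Id_i + \Id_j).
\]
Since $V(\Id_i + \Id_j) = V(\Id_i)\cap V(\Id_j) \subseteq V(\Id_i) \subseteq V(\prod_{l=1}^{k}\Id_l)$, each term in the union on the right is already contained in the first summand, so the union collapses to $V(\prod_{i=1}^{k}\Id_i)$, as asserted in the paragraph preceding the proposition.

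For the tameness statement, I would simply invoke Corollary \ref{Lganzesbs}, which was formulated precisely for this setup: with $\Id = \prod_{1\leq i<j\leq k}(\Id_i + \Id_j)$ (tame by Lemma \ref{LpaarweiseVereinigung}) and $\Jd = \prod_{i=1}^{k}\Id_i$, the product $\Id\cdot \Jd$ is tame. So no further combinatorial work on Newton polyhedra or ideal tangent cones is needed here.

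There is no real obstacle; the proposition is a direct corollary. The only place where any care is required is making sure that the zero-set argument uses the correct direction of inclusion (which follows from $V(\Id_i+\Id_j) \subseteq V(\Id_i)$ rather than the reverse), and that Lemma \ref{LpaarweiseVereinigung} and Corollary \ref{Lganzesbs} are cited in the right order so that the tameness conclusion is immediate.
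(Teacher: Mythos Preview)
Your proposal is correct and matches the paper's approach exactly: the proposition is stated there without a separate proof, as it follows immediately from the zero-set computation in the preceding paragraph together with Corollary~\ref{Lganzesbs}. Your observation that the hypothesis ``not tame'' plays no role in either conclusion is also accurate.
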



\section{Blowups in monomial Building Sets} \label{SECbuilding}


Fulton and MacPherson study the problem of the compactification of the
complement of configuration spaces \cite{FM}. They consider
configuration spaces $F(n,X)$ of smooth algebraic varieties $X$, where
$F(n,X)$ denotes  the space of $n$-tuples of mutually distinct points
in $X$: 
$$ F(n,X)= \{ (\xin) \in X^n: x_i \neq x_j \text{ for } i \neq j \}.$$
One can construct a compactification $X[n]$ of $F(n,X)$ via a sequence of
blowups such that the complement of the original configuration space is a normal
crossings divisor. De Concini and Procesi \cite{DP} have introduced wonderful
models of finite families of linear subspaces of a vector space $X$. A
wonderful model for
such a  subspace arrangement is constructed so that the complement of
the arrangement remains unchanged and the subspaces are substituted by a normal
crossings divisor. In order to achieve normal crossings, De Concini and Procesi
introduced the notion of a building set of an arrangement of linear subspaces.

Later MacPherson and Procesi \cite{MP} generalized wonderful models to conic
varieties over $\C$. They use conical stratifications and generalize the notion
of building sets. Special compactifications have also been studied by Hu
\cite{Hu} and Ulyanov \cite{Ul}. Li \cite{LiLi} has generalized wonderful
models to compactifications of subspace arrangements of a smooth
variety. In \cite[Thm 1.3]{LiLi} it is shown that the blowup in a product of
ideals is smooth if they form a building set.
Wonderful compactifications are also studied by combinatorial means by Feichtner
\cite{Fe}. 

In the following we will introduce the notion of building sets, like in
\cite{DP}. We use this notion for linear subspaces when we look at
zero sets of coordinate ideals $\Id=(x_i, i \in I)$ in $K[{\bf x}]$ with $I
\subseteq \{1, \ldots, n \}$, which yield only linear (coordinate) subspaces.
With the help of building sets we find a large class of tame product ideals:  In Proposition
\ref{S:buildingsmooth} we show that the blowup
with center a product $\prod_i \ms{G}_i$ of ideals $\ms{G}_i=(x_j: j \in I_i)$
is smooth if $\mc{G}=\{ \ms{G}_i \}$ form a building set. However, in Section
\ref{SECpermutation} we provide a large class of a tame products of coordinate
ideals, which do not come from building sets.

Consider a vector space $V \cong K^n$. An {\it arrangement} of subspaces is a
finite set $\mc{C}$ of nonzero subspaces of $V$ closed under taking sums. 

For $U \in \mc{C}$ we call $U_1, \ldots, U_k \in \mc{C}$, with $k \geq 2$, a  decomposition of $U$ if
$$U= U_1 \oplus \cdots \oplus U_k\,,$$
and for each subspace $A \subseteq U$ in $\mc{C}$ also each $A \cap U_i$ is
an element of $\mc{C}$ and
$$A=(A \cap U_1) \oplus \cdots \oplus (A \cap U_k).$$
If a subspace $U$ does not allow a decomposition in $\mc{C}$ we call $U$ 
irreducible. Given nonzero subspaces $V_1,\ldots,V_k$, then the arrangement
generated by $V_1,\ldots,V_k$ is the smallest set of subspaces containing
$V_1,\ldots,V_k$ and is closed under taking sums.

\begin{ex}
(1) Let $V=K^4$ with the standard basis  $\vek{e}_1, \ldots, \vek{e}_4.$ Let
$\mc{C}=\{ U_1, \ldots, U_7 \}$ be an arrangement with $U_1=\,_K \langle
\vek{e}_1, \vek{e}_2, \vek{e}_3 \rangle$, $U_2=\,_K \langle \vek{e}_1\rangle$,
$U_3=\,_K \langle \vek{e}_2 \rangle$, $U_4=\,_K \langle \vek{e}_3 \rangle$,
$U_5=\,_K \langle \vek{e}_1, \vek{e}_2 \rangle$, $U_6=\,_K\langle \vek{e}_2,
\vek{e}_3 \rangle$ and $U_7=\,_K\langle \vek{e}_1, \vek{e}_3 \rangle$. The
irreducible elements of $\mc{C}$ are $U_2, U_3, U_4$. Then $U_1=U_5\oplus U_4$
and $U_1=U_2 \oplus U_3 \oplus U_4$ are decompositions of $U_1$, since $U_6=(U_6
\cap U_4) \oplus (U_6 \cap U_5)$ and $U_7=(U_7 \cap U_4) \oplus (U_7 \cap U_5)$
are direct sums of elements in $\mc{C}$.  \\
(2) Now consider $\mc{C}'=\mc{C} \backslash \{ U_{3} \}$. The direct sum $U_1=U_4\oplus U_5$ is not a decomposition since $U_{6} \subseteq U_1$ in $\mc{C}'$, but $U_6 \cap U_5=\,_K \langle \vek{e}_2 \rangle \not \in \mc{C}'$.
\end{ex}

\begin{defn} \label{Sbsdefinition}
Let $\mc{G}$ be a nonempty set of nonzero subspaces of $V$. Let $\mc{C}$ be the
arrangement generated by $\mc{G}$. We call $\mc{G}$ a building set if the
following conditions are satisfied: 
\begin{itemize}
\item[(a)] The irreducible elements of $\mc{C}$ are contained in $\mc{G}$.
\item[(b)] If $A,B$ are in $\mc{G}$ and if $A+B$ is not a direct sum, then $A+B$
is already in $\mc{G}$.
\end{itemize}

Equivalently, $\mc{G}$ is a building set of the arrangement $\mc{C}$ if and only
if each element $C \in \mc{C}$ is the direct sum $C=\bigoplus_{i=1}^k G_i$ of
the set of maximal elements $G_1, \ldots, G_k$ in $\mc{G}$ that are contained in
$C$ (see \cite[thm 2.3]{DP}).
\end{defn}

Note that the decomposition of $C=\bigoplus_i G_i$ with the $G_i$
maximal with respect to inclusion is unique. Moreover, for all $A$ in $\mc{G}$, $A \subseteq C$ there exists an $i$ such that $A \subseteq G_i$.

\begin{ex}
Let $\mc{C}$ be an arrangement of subspaces in $V$. \\
(1) $\mc{C}$ is a building set. This is clear because one can write each $C \in \mc{C}$ as a ``direct sum'' of its maximal subspace $C$. \\
(2) The set of irreducible elements of $\mc{C}$ is a building set. \\
(3)  Let $V=K^3$ with basis $\vek{e}_1, \vek{e}_2, \vek{e}_3$ and let $\mc{G}=\{G_1,G_2,$ $G_3\}=$ $\{\,_K\langle \vek{e}_1, \vek{e}_2 \rangle,$ $_K\langle \vek{e}_1, \vek{e}_3 \rangle,$ $_K\langle \vek{e}_2, \vek{e}_3 \rangle\}$. Then $\mc{G}$ is not a building set, for $$K^3=\,_K\langle \vek{e}_1, \vek{e}_2, \vek{e}_3 \rangle=G_1+G_2=G_1+G_3=G_2+G_3$$
is not a \textit{direct} sum of maximal elements of $\mc{G}$. \\
However, if we consider $\mc{G}':=\mc{G} \cup \{\,_K\langle \vek{e}_1, \vek{e}_2, \vek{e}_3 \rangle \}$, then all sums are direct because $\mc{G}'$ is the set of irreducible elements of $\mc{C}$.
\end{ex}

In the following we identify a coordinate ideal $(x_i: i \in I)$ in $K[{\bf x}]$
with the linear subspace $ {}_{K}\langle {\bf
  e}_i,i \in I \rangle$ in $V=K^n$. We then call a set $\mc{C}$ an arrangement
of ideals if the corresponding linear subspaces form an arrangement in $V$.  We
call a set $\mc{G}$ a  building set of ideals if the corresponding subspaces
form a building set in $V$. The sum of two elements $\Id + \Jd$
of an arrangement $\mc{C}$ is direct if and only if $\Id$ and $\Jd$ have
transverse clouds.

\begin{Bem}
One can define building sets and arrangements dually. Then instead of
taking sums of subspaces one takes intersections of subspaces. 
Let $\mc{C}$ be an arrangement in $V$. Then $\mc{P}=\{ V^\bot: V \in
\mc{C} \}$ is an arrangement in the sense of \cite{LiLi, MP} since
$U^\bot \cap W^\bot =(U+W)^\bot$. In particular, for coordinate ideals
$\Id$, we have $\Id^\bot=V(\Id)$. For two ideals $\Id=(x_i: i \in I)$,
$\Jd=(x_j: j \in J)$  we have 
\begin{equation*}
V(\Id + \Jd) = V(x_k: k \in I\cup J)=V(\Id)\cap V(\Jd).
\end{equation*}
One can also generalize the notions of building sets to arrangements of subvarieties of a smooth variety, see \cite{LiLi}.
\end{Bem}

\begin{defn}
Let $\mc{G}$ be a building set of monomial ideals in
$K[x_1,\ldots,x_n]$. For any $\Id\in\mc{G}$ we can write $\Id=( x_a
: a\in A)$ for some subset $A\subset
\left\{1,2,\ldots,n\right\}$. We define the building set of sets
associated to $\mc{G}$ to be the set of all such subsets $A$.
\end{defn}

For any building set of monomial ideals $\mc{G}$ with associated
building set of sets $G$ we generally write $\Id_A$ for the ideal
$( x_a : a\in A)$ for any $A\in G$. By the defining
properties of building sets of ideals we see that $G$ satisfies that
for all sets $A$ and $B$ in $G$ with nonempty intersection also their
union is in $G$. Conversely, given any set of subsets of
$\left\{1,2,\ldots,n\right\}$ satisfying this property, we can obtain
a building set of monomial ideals $\Id_A$, where $A$ runs over the
elements of $G$, for the arrangement that they generate.

\begin{ex}[Mickey Mouse example]
We consider an arrangement of subsets of $\left\{1,2,\ldots,n\right\}$
generated by $A$, $B$ and $C$ where
$A\cap C=\emptyset$, $A\cap B\neq \emptyset$ and $B\cap C\neq
\emptyset$, see figure \ref{mickey}. The smallest building set
is $G=\left\{A,B,C,A\cup B,B\cup C,A\cup B\cup C \right\}$. The
biggest is given by $G'=G\cup \left\{ A\cup C\right\}$.

\begin{figure}[h]
\begin{center}
\begin{pspicture}(-0.2,-1.4)(6,3.5)
\pscircle[fillstyle=vlines](1.3,2.5){0.9}
\pscircle[fillstyle=hlines](3.7,2.5){0.9}
\pscircle(2.5,1){1.5}
\rput(0.1,2.5){{\huge {\bf A}}}
\rput(4.9,2.5){{\huge {\bf C}}}
\rput(2.5,-1){{\huge {\bf B}}}
\end{pspicture}
\caption{Three sets generating an arrangement.}
\label{mickey}
\end{center}
\end{figure}
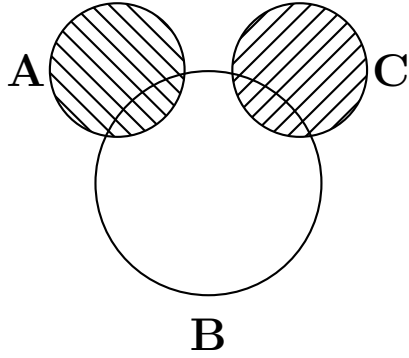

Let us denote for any $D\in G$ the corresponding ideal $\Id_D=(
x_d : d\in D)$. We write $\mc{G}$ for the set of ideals defined
by $G$: $\mc{G}=\left\{\Id_D:D\in G \right\}$. We will show that the
blowup in the building set $\mc{G}$ is smooth; that is, the ideal
$\prod_{D\in G}\Id_D$ is smooth.

The vertices of the
Newton polyhedron of the product $\Id=\prod_{D\in G}\Id_D$ are of the form
${\bf v}=\sum_{D\in G} \nu(D)$, where $\nu(D)={\bf e}_d$ for some
$d\in D$. The function $\nu$ chooses a vertex for each element of the
building set. By Remark \ref{Rmk:summe} we know that $\nu(A\cup B)=\nu(A)$
or $\nu(A\cup B)=\nu(B)$, since else ${\bf v}$ would not be a
vertex. The ideal tangent cone of ${\bf v}$ is given by the sum of the
cones
\begin{equation}
IT_{\nu(D)}(\Id_D) = {}_{\N}\langle {\bf e}_{d} - {\bf
  e}_{\nu(D)}\,,\Sigma \rangle\,,
\end{equation}
where $D$ runs over all elements of $G$ and where $\Sigma$ is the set
of all basis vectors $\left\{{\bf e}_1,\ldots,{\bf e}_n\right\}$\,.

We may assume $\nu(A\cup B)=\nu(A)$ and we first additionally assume
that $\nu(A)$, $\nu(B)$ and $\nu(C)$ are distinct. There are four
possibilities:
\begin{itemize}
\item[(i)] $\nu(B\cup C)=\nu(B)$ and $\nu(A\cup B \cup C)=\nu(A)$.
\item[(ii)] $\nu(B\cup C)=\nu(B)$ and $\nu(A\cup B \cup C)=\nu(B)$.
\item[(iii)] $\nu(B\cup C)=\nu(C)$ and $\nu(A\cup B \cup C)=\nu(A)$.
\item[(iv)]  $\nu(B\cup C)=\nu(C)$ and $\nu(A\cup B \cup C)=\nu(C)$.
\end{itemize}
In case $(i)$ we see that all generators of the form ${\bf e}_i -
\nu(A)$ for $i\notin A$, cannot be minimal since if $i\in B\setminus A$
we have ${\bf e}_i - \nu(A) = {\bf e}_i - \nu(B) + \nu(B)-\nu(A)$ and
similarly for $i\in C$. Hence no $i\in\left\{1,2,\ldots,N\right\}$
exists with two minimal generators of the form ${\bf e}_i - {\bf e}_k$ for some
$1\leq k \leq n$. In case $(ii)$ we see that both $\nu(A) -\nu(B)$ and
$\nu(B)-\nu(A)$ are in the ideal tangent cone, and thus in this case
${\bf v}$ is not a vertex. Case $(iii)$ is similar to the first case;
we can write for any $b\in B$: ${\bf e}_b - \nu(A)= {\bf e}_b - \nu(C)
+ \nu(C) - \nu(A)$, ${\bf e}_b - \nu(C) = {\bf e}_b - \nu(B) + \nu(B)
- \nu(C)$ for any $c\in C$ we can write ${\bf e}_c - \nu(A) = {\bf
  e}_c - \nu(C) +\nu(C) -\nu(A)$. Hence also in this case, the ideal
tangent cone is simplicial. For the last case we have for any $a\in A$:
${\bf e}_a - \nu(C) = {\bf e}_a - \nu(A)+ \nu(A) - \nu(A)$. For any
$b\in B$ we have ${\bf e}_b - \nu(A) = {\bf e}_b - \nu(B) + \nu(B) -
\nu(A)$ and ${\bf e}_b - \nu(C) = {\bf e}_b - \nu(B) + \nu(B) -
\nu(C)$. Hence also in this case we cannot have too many minimal
generators.

The case that $\nu(A)$, $\nu(B)$ and $\nu(C)$ are not all distinct
uses similar arguments and is dealt with easily.
\end{ex}

\begin{Snum} \label{S:buildingsmooth}
Let $\mc{G}$ be a building set of monomial ideals in
$K[x_1,\ldots,x_n]$. Then the ideal $\prod_{\Id\in\mc{G}}\Id$ is
tame.
\end{Snum}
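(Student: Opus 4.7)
The plan is to extend the approach of the Mickey Mouse example to an arbitrary building set $\mc{G}$ with associated set of sets $G$. By Lemma \ref{L:summe}, $N(\Id) = \sum_{A \in G} N(\Id_A)$, and by the standard behavior of vertices under Minkowski sums any vertex ${\bf v}$ of $N(\Id)$ decomposes as ${\bf v} = \sum_{A \in G} {\bf e}_{d(A)}$ for a choice function $d\colon G \to \{1,\ldots,n\}$ with $d(A) \in A$ (the vertices of $N(\Id_A)$ are exactly the ${\bf e}_a$ with $a\in A$). Lemma \ref{Ltangentenkegel} then gives
\[
IT_{\bf v}(\Id) \;=\; \sum_{A \in G} IT_{{\bf e}_{d(A)}}(\Id_A) \;=\; {}_{\N}\langle {\bf e}_a - {\bf e}_{d(A)} : A \in G,\, a \in A \rangle + \Sigma_n.
\]
An immediate consequence of Remark \ref{Rmk:summe} is the \emph{compatibility condition}: whenever $A \subseteq A'$ in $G$ with $d(A') \in A$, we must have $d(A) = d(A')$; otherwise both $2{\bf e}_{d(A)}$ and $2{\bf e}_{d(A')}$ lie in $N(\Id_A) + N(\Id_{A'})$ and their midpoint ${\bf e}_{d(A)} + {\bf e}_{d(A')}$ is not a vertex of this summand of $N(\Id)$, contradicting that ${\bf v}$ is a vertex.

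The heart of the argument is to show that for each direction $i \in \{1,\ldots,n\}$ the minimal generating system of $IT_{\bf v}(\Id)$ contains at most one vector with positive $i$-th coordinate, i.e., at most one of ${\bf e}_i$ or ${\bf e}_i - {\bf e}_k$. Assume for contradiction that ${\bf e}_i - {\bf e}_k$ and ${\bf e}_i - {\bf e}_l$ with $k \neq l$ are both minimal; they arise from sets $A, B \in G$ with $i \in A \cap B$, $d(A) = k$ and $d(B) = l$. Since $A \cap B \ni i$ is nonempty, the building set axiom forces $C := A \cup B \in G$. The vertex $d(C)$ lies in $A \cup B$; by symmetry suppose $d(C) \in A$, so the compatibility condition yields $d(C) = d(A) = k$. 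As $l \in B \subseteq C$, the vector ${\bf e}_l - {\bf e}_k = {\bf e}_l - {\bf e}_{d(C)}$ lies in $IT_{{\bf e}_{d(C)}}(\Id_C) \subseteq IT_{\bf v}(\Id)$. Therefore
\[
{\bf e}_i - {\bf e}_k \;=\; ({\bf e}_i - {\bf e}_l) + ({\bf e}_l - {\bf e}_k)
\]
exhibits ${\bf e}_i - {\bf e}_k$ as an $\N$-linear sum of two nonzero elements of $IT_{\bf v}(\Id)$; expanding each summand in terms of the minimal generating set and using pointedness of $IT_{\bf v}(\Id)$ at the vertex ${\bf v}$ forces ${\bf e}_i - {\bf e}_k$ to be an $\N$-combination of other minimal generators, contradicting its minimality. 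The analogous identity ${\bf e}_i = ({\bf e}_i - {\bf e}_k) + {\bf e}_k$ rules out having both ${\bf e}_i$ and ${\bf e}_i - {\bf e}_k$ as minimal generators.

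Since each standard basis vector ${\bf e}_i$ lies in $IT_{\bf v}(\Id)$ and must be expressible through the minimal generating set, each direction carries at least---hence by the above exactly---one minimal generator. Thus $IT_{\bf v}(\Id)$ has precisely $n$ minimal generators and is simplicial, so Theorem \ref{smoothness}(ii) gives smoothness of the chart $U_{\bf v}$; ranging over all vertices of $N(\Id)$ shows $\Id$ is tame. The main obstacle is locating the right mediating set: once one has the compatibility condition on $d$ and the building set closure axiom $A \cap B \neq \emptyset \Rightarrow A \cup B \in \mc{G}$, the set $C = A \cup B$ automatically provides the relation ${\bf e}_l - {\bf e}_k$ that collapses any alleged pair of minimal generators in the same direction---without this combinatorial interplay the direction count would fail.
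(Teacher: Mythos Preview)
Your proof is correct and follows essentially the same route as the paper's: decompose a vertex ${\bf v}$ via a choice function on $G$, use Remark~\ref{Rmk:summe} to obtain the compatibility constraint on that choice, and then exploit the building-set closure $A\cap B\neq\emptyset\Rightarrow A\cup B\in G$ to produce the mediating vector ${\bf e}_l-{\bf e}_k$ that collapses any putative pair of minimal generators in the same direction. Your write-up is in fact slightly more explicit than the paper's in justifying the compatibility condition and in handling the mixed case ${\bf e}_i$ versus ${\bf e}_i-{\bf e}_k$.
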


\begin{proof}
Let $G$ be the associated building set of sets associated to
$\mc{G}$. For any $\Id\in \mc{G}$ with $\Id = (x_a:a\in A)$ we also
write $\Id_A$; we thus label the ideals in $\mc{G}$ by their
associated subsets of $\left\{1,2,\ldots,n\right\}$ in $G$. We thus have
$\Id_{A\cup B}=\Id_A+\Id_B$.

Fix a vertex ${\bf v}$ of the Newton polyhedron of
$\prod_{\Id\in\mc{G}}\Id$. Then ${\bf v}$ is sum $\sum_{A\in
  G}\nu(A)$ and $\nu$ has the following property: for $A,B$ in $G$
with $A\cup B$ in $G$, we have $\nu(A\cup B)=\nu(A)$ or $\nu(A\cup
B)=\nu(B)$. The ideal tangent cone of ${\bf v}$ is the sum
\begin{equation}
IT_{{\bf v}}\Bigl(\prod_{\Id\in\mc{G}}\Id\Bigr)=\sum_{A\in G} IT_{\nu(A)}(\Id_A)\,.
\end{equation}

Suppose that $IT_{{\bf v}}\Bigl(\prod_{\Id\in\mc{G}}\Id\Bigr)$ is not
simplicial. Then for some index $a$ there are at least two minimal
generators ${\bf e}_a - {\bf e}_b$ and ${\bf e}_a - {\bf e}_c$, with
$1\leq b < c \leq n$. It follows that we must
have $b=\nu(B)$ and $c=\nu(C)$ for some $B$ and $C$ in $G$ and
furthermore $a$ lies in $B$ and $C$. But then $B\cap C\neq \emptyset$
and thus $B\cup C\in G$. We may assume ${\bf e}_b=\nu(B\cup C)$. Then ${\bf
  e}_c - {\bf e}_b\in IT_{\nu(B)}(\Id_{B\cup C})$ and therefore ${\bf
  e}_c - {\bf e}_b$ lies in $IT_{{\bf v}}
\Bigl(\prod_{\Id\in\mc{G}}\Id\Bigr)$. But then ${\bf e}_a - {\bf e}_b=
{\bf e}_a - {\bf e}_c+ {\bf e}_c - {\bf e}_b$, contradicting that
${\bf e}_a - {\bf e}_b$ is minimal.
\end{proof}


\section{Permutohedral ideals} \label{SECpermutation}


In this section we prove that the so-called permutohedral ideals are tame. The
permutohedral ideal $\Id_{n,k}$  is the ideal in $K[{\bf x}]$ defined by
\begin{equation}\label{perm.prod}
\Id_{n,k}= \prod_{i_1 < \ldots < i_k} (x_{i_1}, \ldots x_{i_k}) , \ \text{all } i_j\in \{1, \ldots , n \}\,.
\end{equation}
 Obviously, the factors in eqn.(\ref{perm.prod}) do not form a building set.
Thus the permutohedral ideals form a class of tame ideals that do not stem from
building sets.

\begin{defn}
Let $p_1, \ldots, p_n$ be real numbers. The permutohedron $P(p_1,\ldots, p_n)$ is the convex polytope in $\R^n$ defined as the convex hull of all permutations of the vector $(p_1, \ldots, p_n)$:
$$ P(p_1, \ldots, p_n)=\conv ((p_{\sigma (1)}, \ldots,  p_{\sigma (n)}): \sigma \in S_n), $$
where $S_n$ is the symmetric group. $P(p_1, \ldots, p_n)$ lies in the
affine hyperplane $H= \{ \vek{x} \in \R^n: \sum_{i=1}^n x_i= \sum_{i=1}^n p_i\}$.
\end{defn}

We consider the permutohedron $\pi_{n,k}=P( \binom{n-1}{k-1}, \binom{n-2}{k-1} ,
\ldots, \binom{k }{ k-1}, 1, \underbrace{0, \ldots,
  0}_{k-1})$. It is easy to see that $\pi_{n,k}$ has
$\tfrac{n!}{(k-1)!}$ vertices whose nonzero entries are pairwise
distinct and that $\pi_{n,k}$ lies in the affine hyperplane $H= \{
\vek{x} \in \R^n: \sum_{i=1}^n x_i= \binom{n}{ k} \}$.

\begin{Lem} \label{L:permecken}
The vertices of the Newton polyhedron of $\Id_{n,k}$ are equal to the vertices of $\pi_{n,k}$.
\end{Lem}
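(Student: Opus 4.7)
The plan is to identify vertices of $N(\Id_{n,k})$ with total orderings of $\{1,\ldots,n\}$ and then show directly that the resulting lattice points are exactly the vertices of $\pi_{n,k}$. The key tool is the Minkowski-sum decomposition provided by Lemma \ref{L:summe}:
\[
N(\Id_{n,k}) = \sum_{|S|=k} N(\Id_S), \qquad \Id_S = (x_i : i\in S),
\]
where each summand $N(\Id_S) = \conv(\vek{e}_i : i\in S) + \R_+^n$ has vertex set exactly $\{\vek{e}_i : i \in S\}$.

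First I would characterise the vertices of this Minkowski sum. A point is a vertex of $N(\Id_{n,k})$ iff some linear functional $\ell(\vek{x}) = \sum_i \lambda_i x_i$ attains its minimum uniquely there; finiteness of the minimum on each $N(\Id_S)$ forces $\lambda_i \geq 0$, and the unique minimiser then decomposes as $\vek{v}_\nu = \sum_S \vek{e}_{\nu(S)}$ where $\nu(S)$ is the unique element of $S$ on which $\lambda$ is strictly smaller than on the other elements of $S$. Since for $k \geq 2$ any two indices lie together in some $k$-subset, the $\lambda_i$ must be pairwise distinct, so they induce a total order $\prec$ on $\{1,\ldots,n\}$ and $\nu(S) = \min_\prec S$. (The case $k=1$ is trivial: $\Id_{n,1}$ is principal with a single vertex matching $\pi_{n,1}$.)

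Next I would do the counting. Fix an ordering $j_1 \prec j_2 \prec \ldots \prec j_n$. The multiplicity of $\vek{e}_{j_i}$ in $\vek{v}_\nu$ equals the number of $k$-subsets $S$ with $\min_\prec S = j_i$, that is, those $k$-subsets containing $j_i$ and disjoint from $\{j_1, \ldots, j_{i-1}\}$. The remaining $k-1$ elements are chosen from the $n-i$ elements $\{j_{i+1}, \ldots, j_n\}$, giving $\binom{n-i}{k-1}$ such subsets (and $0$ when $i > n-k+1$). Thus the coordinates of $\vek{v}_\nu$ are a permutation of
\[
\Bigl(\tbinom{n-1}{k-1},\, \tbinom{n-2}{k-1},\, \ldots,\, \tbinom{k-1}{k-1},\, \underbrace{0, \ldots, 0}_{k-1}\Bigr),
\]
which is exactly a vertex of $\pi_{n,k}$. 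Conversely, given any such permutation one reads off an order (unique up to permuting the last $k-1$ zero positions), and setting $\lambda_{j_i}=i$ realises the corresponding $\vek{v}_\nu$ as a vertex of $N(\Id_{n,k})$. Hence the vertex sets agree.

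I expect the main technical point to be the characterisation of Minkowski-sum vertices as simultaneous unique minimisers of a common linear functional; once that is in place the combinatorial identification with $\pi_{n,k}$ is just the binomial count. Alternatively, one can rule out choice functions $\nu$ that do not come from a total order by exhibiting a swap as in Remark \ref{Rmk:summe}: if there exist $S_1, S_2$ with $\nu(S_1)=a$, $\nu(S_2)=b$, $a \neq b$, and both $a,b \in S_1 \cap S_2$, then one can reroute via the identical point $\vek{v}_{\nu'}$ produced by swapping the assignments, and use this to express $\vek{v}_\nu$ as a midpoint of distinct points in $N(\Id_{n,k})$.
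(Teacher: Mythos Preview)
Your main argument is correct and proceeds along a genuinely different route from the paper. The paper argues by hand in both directions: to show that each permutation of $\bigl(\binom{n-1}{k-1},\ldots,1,0,\ldots,0\bigr)$ is a vertex it intersects $N(\Id_{n,k})$ with a nested sequence of coordinate hyperplanes $\{x_i=\binom{n-i}{k-1}\}$, and to show that no other point of the cloud is a vertex it attaches to each such point a complete directed graph (for $k=2$; marked $k$-simplices for larger $k$), proves that non-max-vectors always carry a directed cycle, and then flips the edges of that cycle to write the point as a barycentre of neighbouring points in the cloud. Your approach through linear functionals on the Minkowski sum $\sum_{|S|=k}N(\Id_S)$ is shorter and more structural: a functional with a unique minimiser on the sum must have pairwise distinct weights, hence induces a total order, and the binomial count falls out immediately. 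What the paper's approach buys is self-containment (no appeal to the general vertex characterisation for Minkowski sums) and an explicit combinatorial picture of the non-vertices; what yours buys is brevity and a cleaner explanation of \emph{why} the answer is a permutohedron.

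One caveat about your final paragraph: the ``alternative'' swap argument via Remark~\ref{Rmk:summe} is not sufficient as stated. For $k=2$, $n=3$ take $\nu(\{1,2\})=1$, $\nu(\{1,3\})=3$, $\nu(\{2,3\})=2$. Any two $2$-subsets of $\{1,2,3\}$ meet in a single element, so your hypothesis ``$a,b\in S_1\cap S_2$ with $a\neq b$'' is never satisfied; yet $\nu$ does not come from a total order, and $\vek{v}_\nu=(1,1,1)$ is not a vertex. Pairwise swaps detect only local conflicts, not cyclic ones; it is precisely to handle such cycles that the paper develops its directed-graph argument. This does not affect your primary linear-functional proof, which is complete on its own.
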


\begin{proof}
We will prove that the vertices of $\pi_{n,k}$ correspond to the vertices of the convex hull of the cloud $I$ of $\Id_{n,k}$. This will prove the lemma as any vertex of $N(\Id_{n,k})$ lies in the cloud $I$. We denote the convex hull of $I$ by $W$.

First let $\vek{p}=( \binom{n-1 }{ k-1}, \binom{n-2}{ k-1} , \ldots,
\binom{k}{k-1}, 1, 0^{k-1})$ be a vertex of $\pi_{n,k}$. We show that
$\vek{p}$ is also a vertex of $W$. Expanding (\ref{perm.prod}) we see
$\vek{p}\in W$. We consider the hyperplane $H_1=\{ \vek{x} \in \R^n:
x_1= \binom{n-1}{ k-1} \}$ and define $W \cap H_1 =W_1$. Then $W_1$ is
nonempty since $\vek{p} \in W_1$. For all $\vek{q} \in W$, $q_1 \leq
\binom{n-1}{k-1}$; this follows from the construction of $W$ as the
sum of the Newton polyhedra $N((x_{i_1}, \ldots x_{i_k}))$. Indeed,
there are exactly $\binom{n-1}{k-1}$ factors that contain
$x_1$. Hence the maximal entry in the $\vek{e}_1$-direction is $\binom{n-1
  }{ k-1}$. Thus $W_1$ is a face of $W$.
Then consider $H_2=\{ \vek{x} \in \R^n: x_2= \binom{n-2}{k-1} \}$ and
set $W_2=W_1 \cap H_2$. Similar to above we see $H_2$ lies on one side
of $W_2$; in the factors of $\Id_{n,k}$ we have chosen $\binom{n-1}{
  k-1}$ times $x_1$, and of the remaining factors $\binom{n-2}{ k-1}$
contain $x_2$. Hence for any $\vek{q}\in W$, $q_2$ can at most be
$\binom{n-2 }{k-1}$. Continuing this way we find that $W_{n-k} \cap
H_{n-k+1}=\vek{p}$ with $H_{n-k+1}=\{\vek{x} \in \R^n: x_{n-k+1}=1
\}$. Hence $\vek{p}$ is also a vertex of $W$.

We call a vertex $\vek{p}$ of $\pi_{n,k}$ a max-vector. Fix any point
$\vek{v}$ in $W$ that is not a max-vector. We will show that $\vek{v}$
can be written as a convex combination of other points contained in
$W$, showing that $\vek{v}$ is not a vertex.

We start with the case $k=2$. For $k=2$ a max-vector is a
permutation of $(n-1,\ldots, 1, 0)$. The point
$\vek{v}$ corresponds to a monomial appearing in the expansion of
$(x_1,x_2)(x_1,x_3) \cdots (x_{n-1},x_n)$. Such a monomial arises by
choosing from each pair $(x_i,x_j)$ the $x_i$ or the $x_j$. For
generic $\vek{v}$ there are many different possible choices that all
contribute to $\vek{v}$. We translate this choosing of terms from each
factor into different graphs: Consider $n$ vertices labeled with the
numbers $1,\ldots,n$. To codify the choice we direct an edge from $i$
to $j$ if in the factor $(x_i,x_j)$ the monomial $x_i$ was
chosen. Thus for each way of choosing one out of each $(x_i,x_j)$ we
get a graph, which is a complete directed graph with $n$
vertices. For any given graph associated to $\vek{v}$, $v_i$ is the
number of outgoing edges from vertex $i$. For a max-vector there is
only one graph, which is a tree, see for example the (unique) graph
corresponding to $(4,3,2,1,0)$ for $n=5$ below:
\begin{center}
\begin{tabular}{c}
\begin{pspicture}(-2,-1)(3,4)
\rput(0,0){\rnode{4}{\pscirclebox{4}}}
\rput(2,0){\rnode{3}{\pscirclebox{3}}}
\rput(-0.61,1.9){\rnode{5}{\pscirclebox{5}}}
\rput(2.61,1.9){\rnode{2}{\pscirclebox{2}}}
\rput(1,3.2){\rnode{1}{\pscirclebox{1}}}
\ncline{->}{1}{2}\ncline{->}{1}{3}\ncline{->}{1}{4}
\ncline{->}{1}{5}\ncline{->}{2}{3}\ncline{->}{2}{4}
\ncline{->}{2}{5}\ncline{->}{3}{4}\ncline{->}{3}{5}
\ncline{->}{4}{5}
\end{pspicture}
\end{tabular}
\end{center}

If we flip the orientation of an edge $i \to j$ in a graph associated
to $\vek{v}$ we choose $x_j$ instead of $x_i$ in the factor
$(x_i,x_j)$. The resulting graph corresponds to the graph of the point
$\vek{v}'=(v_1, \ldots, v_i-1, \ldots, v_j +1, \ldots, v_n)$. Clearly
$\vek{v}' \in W$.

Claim: If $\vek{v} \in W$ is not a max-vector, then any of its associated graphs contains a cycle.

Proof of Claim: Let $G$ be any of the graphs associated to
$\vek{v}$. We may assume $v_1\geq v_2\geq \ldots \geq v_n$. If
$\vek{v}$ is not a max-vector, there is a $j$ with $v_j< n-j$. It
follows that no vertex from $1$ to $j-1$ can be contained in a cycle,
since $1$ only has outgoing edges, $2$ has one incoming edge from $1$
and for the rest outgoing, and so on, till we get to $j$. Thus we can
restrict the search for cycles to the subgraph of $G$ with vertices
$j, \ldots, n$ and may assume that $v_1<n-1$.

We can find a cycle with the following procedure: since $v_1<n-1$, the
vertex $1$ has at least one incoming edge from a vertex $l$. We go
from $1$ to $l$. Since $v_l\leq v_1<n-1$ also this vertex will have at
least one incoming edge. We choose such an incoming edge and go to the
next vertex. Since the graph has a finite number of vertices, we will
meet a certain vertex for a second time after a finite number of
steps. Hence we will find a cycle by retracing a part of our
path. This proves the claim. 

For $n=5$ we depicted a graph associated to the vector $(4,2,2,1,1)$, containing several cycles:
\begin{center}
\begin{tabular}{c}
\begin{pspicture}(-2,-1)(3,4)
\rput(0,0){\rnode{4}{\pscirclebox{4}}}
\rput(2,0){\rnode{3}{\pscirclebox{3}}}
\rput(-0.61,1.9){\rnode{5}{\pscirclebox{5}}}
\rput(2.61,1.9){\rnode{2}{\pscirclebox{2}}}
\rput(1,3.2){\rnode{1}{\pscirclebox{1}}}
\ncline{->}{1}{2}\ncline{->}{1}{3}\ncline{->}{1}{4}
\ncline{->}{1}{5}\ncline{->}{2}{3}\ncline{->}{2}{4}
\ncline{->}{3}{4}\ncline{->}{3}{5}\ncline{->}{4}{5}
\ncline{->}{5}{2}
\end{pspicture}
\end{tabular}
\end{center}

We may assume that the cycle is $C=1 \to 2 \to \ldots \to n$. If we flip the orientation of the edge $1 \to 2$ we see that $\vek{v}^{(1)}=(v_1 -1, v_2 +1, v_3, \ldots, v_n)$ is in $W$. Flipping the edges $j \to j+1$ we get that the vectors $\vek{v}^{(2)}=(v_1,v_2-1, v_3,+1, v_4, \ldots, v_n), \ldots, \vek{v}^{(n)}=(v_1+1,v_2, v_3, v_4, \ldots,v_n, v_n-1)$ all lie in $W$. It is easy to see that
\begin{equation}
\vek{v}=\frac{1}{n} \sum_{i=1}^n \vek{v}^{(n)}\,.
\end{equation}
Hence $\vek{v}$ is contained in $\conv ( \vek{v}^{(1)},\vek{v}^{(2)}, \ldots, \vek{v}^{(n)})$ and $\vek{v}$ is not a vertex of $N$.

The general case $k \geq 3$ is similar: to each factor $(x_{i_1},
\ldots, x_{i_k})$ corresponds a $k$-simplex with vertices $i_1, \ldots
i_k$. For example, for $n=4$ and $k=2$ one considers the faces of the
tetrahedron.

\begin{center}
\begin{tabular}{c}
\begin{pspicture}(1.4,0)(6,4.1)
\psset{unit=0.7cm}
\pscircle(2,2){0.5}
\pscircle(4,1){0.5}
\pscircle(5.7,2.3){0.5}
\pscircle(3.7,4){0.5}
\rput(2,2){$\mathbf{1}$}\rput(4,1){$\mathbf{2}$}\rput(5.7,2.3){$\mathbf{3}$}\rput(3.7,4){$\mathbf{4}$}
\qline(2.4,1.7)(3.55,1.2)
\qline(4.38,1.32)(5.3,2.0)
\psline[linestyle=dashed](2.48,2.04)(5.2,2.25)
\qline(2.34,2.37)(3.338,3.658)
\qline(4.0,1.5)(3.78,3.52)
\qline(5.3,2.6)(4.1,3.7)
\end{pspicture}
\end{tabular}
\end{center}

Like above, to $\vek{v}$ correspond choices of some $x_{i_j}$
from each factor $(x_{i_1}, \ldots, x_{i_k})$.
If we choose $x_{i_j}$ in a simplex $(x_{i_1}, \ldots, x_{i_k})$, we
mark the vertex $i_j$ as the only ``outgoing'' vertex and the other
$k-1$ vertices as the ``incoming'' vertices of the $k$-simplex. We
search for cycles of vertices on the $k$-simplices in the following
way: start with the $k$-simplex $(1, \ldots, k)$ where we mark the
outgoing vertex. Suppose $1$ is the outgoing vertex. Then we can leave
through $1$ to another $k$-simplex if $1$ is incoming there. We can
find a chain of vertices (and $k$-simplices) representing any point
$\vek{v}$ in $N$. First let $\vek{v}$ be the lexicographically ordered
max-vector $(\binom{n-1}{k-1},\binom{n-2}{k-1}, \ldots, 1, 0,
\ldots, 0)$. Then $1$ is an outgoing vertex in all $\binom{n-1}{ k-1}$
simplices containing it. Since $1$ is nowhere incoming we cannot find
a cycle containing $1$. The vertex $2$ is outgoing for $\binom{n-2}{k-1}$
simplices and incoming for $\binom{n-2 }{k-2}$. But $2$ is
incoming only from simplices with outgoing $1$. Thus $2$ can also not
be part of a cycle. Continuing this argument we see that no vertex $1,
\ldots, n-k+1$ can be part of a cycle. But the remaining vertices are
only incoming and are also not in a cycle.

If $\vek{v} \in W$ is not a max-vector we can assume that $v_1=\binom{n-1}{
k-1}, \ldots, v_{i-1}=\binom{n-i+1}{k-1}, v_i < \binom{n-i}{ k-1}$. Then all
vertices $1, \ldots, i-1$ are not part of a
cycle. But $i$ is at least incoming for one simplex. By a similar
method as for $k=2$ we can find a cycle $i_1 \to i_2 \to \ldots \to i_m$
of length $m$ of vertices on the $k$-simplices for $i_j \geq i$ and
find $m$ points $\vek{v}^{(i)}$ in $W$ such that $\vek{v}$ lies in
their convex hull. As an illustration of the method, we depicted the
$2$-simplices of the tetrahedron and marked the outgoing vertices with
an asterisk:

\begin{tabular}{cccc}
\begin{pspicture}(0.3,0)(3.5,3.0)
\psset{unit=0.5cm}
\pscircle(1,1){0.5}\rput(1,1){$\mathbf{2}$}
\pscircle(4,1){0.5}\rput(4,1){$\mathbf{3}$}
\pscircle(2.5,3.5){0.5}\rput(2.5,3.5){$\mathbf{1}$}
\qline(1.5,1)(3.5,1)\qline(1.224,1.424)(2.2,3.13)\qline(3.75,1.43)(2.8,3.1)
\rput(2.5,2.7){$\mathbf{\star}$}
\end{pspicture}
&
\begin{pspicture}(0.3,0)(3.5,3.0)
\psset{unit=0.5cm}
\pscircle(1,1){0.5}\rput(1,1){$\mathbf{3}$}
\pscircle(4,1){0.5}\rput(4,1){$\mathbf{1}$}
\pscircle(2.5,3.5){0.5}\rput(2.5,3.5){$\mathbf{4}$}
\qline(1.5,1)(3.5,1)\qline(1.224,1.424)(2.2,3.13)\qline(3.75,1.43)(2.8,3.1)
\rput(3.38,1.38){$\mathbf{\star}$}
\end{pspicture}
&
\begin{pspicture}(0.3,0)(3.5,3.0)
\psset{unit=0.5cm}
\pscircle(1,1){0.5}\rput(1,1){$\mathbf{3}$}
\pscircle(4,1){0.5}\rput(4,1){$\mathbf{4}$}
\pscircle(2.5,3.5){0.5}\rput(2.5,3.5){$\mathbf{2}$}
\qline(1.5,1)(3.5,1)\qline(1.224,1.424)(2.2,3.13)\qline(3.75,1.43)(2.8,3.1)
\rput(2.5,2.7){$\mathbf{\star}$}
\end{pspicture}
&
\begin{pspicture}(0.3,0)(3.5,3.0)
\psset{unit=0.5cm}
\pscircle(1,1){0.5}\rput(1,1){$\mathbf{4}$}
\pscircle(4,1){0.5}\rput(4,1){$\mathbf{2}$}
\pscircle(2.5,3.5){0.5}\rput(2.5,3.5){$\mathbf{1}$}
\qline(1.5,1)(3.5,1)\qline(1.224,1.424)(2.2,3.13)\qline(3.75,1.43)(2.8,3.1)
\rput(3.38,1.38){$\mathbf{\star}$}
\end{pspicture}
\\
\end{tabular}

The graph corresponds to the vector $(2,2,0,0)$, which is not a
max-vector. We can find a cycle $1\to 2 \to 1$ using the $2$-simplices
$(1,2,3)$ and $(1,2,4)$.
\end{proof}

\begin{Bem}
The permutation polynomial $p_{n,k}$ is defined by
\begin{equation}
p_{n,k}=\prod_{1\leq i_1<\ldots < i_k\leq n}(x_{i_1}+ \ldots + x_{i_k})\,.
\end{equation}
When we expand $p_{n,k}=\sum_{{\bf a}}c_{{\bf a}}{\bf x}^{{\bf a}}$ in
monomials, then the vectors ${\bf a}\in \N^n$ with $c_{{\bf a}}$
nonzero define a finite set $M$. The vertices of the convex hull of
$M$ correspond to the max-vectors introduced in the proof of Lemma
\ref{L:permecken}. The max-vectors are precisely those ${\bf a}\in M$
with $c_{{\bf a}}=1$.
\end{Bem}

\begin{Snum}
Each ideal $\Id_{n,k}$ is tame.
\end{Snum}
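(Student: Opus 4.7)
My plan is to apply Theorem \ref{smoothness}. Since $\Id_{n,k}$ is invariant under the natural $S_n$-action on $K[\vek{x}]$ and, by Lemma \ref{L:permecken}, the vertex set of $N(\Id_{n,k})$ forms a single $S_n$-orbit, it suffices to verify that the ideal tangent cone at one distinguished vertex is simplicial. I choose
\[
\vek{v} := \sum_{j=1}^{n-k+1}\binom{n-j}{k-1}\vek{e}_j,
\]
corresponding to the identity permutation. The degenerate cases $k=1$ (where $\Id_{n,1}$ is principal) and $k=n$ (where $\Id_{n,n}=\mathfrak{m}$ is a coordinate ideal) are trivial, so I assume $2\leq k\leq n-1$ and set $m := n-k+1$.

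Applying Lemma \ref{Ltangentenkegel} iteratively to (\ref{perm.prod}), the contribution of a factor $(x_i : i\in S)$ to the tangent cone at $\vek{v}$ is the tangent cone of that factor at $\vek{e}_{\min S}$, generated by $\vek{e}_\beta-\vek{e}_{\min S}$ for $\beta\in S\setminus\{\min S\}$. Aggregating over all $k$-subsets, the generators of $IT_{\vek{v}}(\Id_{n,k})$ are
\[
\{\vek{e}_i : 1\leq i\leq n\}\ \cup\ \{\vek{e}_\beta-\vek{e}_\alpha : 1\leq \alpha\leq m,\ \alpha<\beta\leq n\},
\]
the constraint $\alpha\leq m$ reflecting that only these indices can occur as the minimum of a $k$-subset of $\{1,\ldots,n\}$. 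I propose the explicit candidate generating system
\[
\vek{f}_1 := \vek{e}_1, \qquad \vek{f}_j := \vek{e}_j-\vek{e}_{j-1}\ (2\leq j\leq m+1), \qquad \vek{f}_j := \vek{e}_j-\vek{e}_m\ (m+2\leq j\leq n).
\]
Each $\vek{f}_j$ belongs to the generating set above, and elementary column operations (successively adding column $j-1$ to column $j$, and then adding column $m$ to columns $m+2,\ldots,n$) transform the matrix with columns $\vek{f}_1,\ldots,\vek{f}_n$ into the identity, so $\{\vek{f}_1,\ldots,\vek{f}_n\}$ is a $\Z$-basis of $\Z^n$.

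It remains to check that every generator of the tangent cone lies in the $\N$-span of $\vek{f}_1,\ldots,\vek{f}_n$. For the standard basis vectors, telescoping yields $\vek{e}_i=\vek{f}_1+\cdots+\vek{f}_i$ when $i\leq m+1$ and $\vek{e}_i=\vek{f}_1+\cdots+\vek{f}_m+\vek{f}_i$ when $i\geq m+2$. For the arrows, with $\alpha\leq m$ one has $\vek{e}_\beta-\vek{e}_\alpha=\vek{f}_{\alpha+1}+\cdots+\vek{f}_\beta$ when $\beta\leq m+1$, and $\vek{e}_\beta-\vek{e}_\alpha=\vek{f}_{\alpha+1}+\cdots+\vek{f}_m+\vek{f}_\beta$ when $\beta\geq m+2$. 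Since the $\vek{f}_j$ are $\Z$-linearly independent, none can be written as an $\N$-combination of the others (otherwise uniqueness of expansions in a $\Z$-basis would force $\vek{f}_j=0$), so $\{\vek{f}_1,\ldots,\vek{f}_n\}$ is the unique minimal generating system of $IT_{\vek{v}}(\Id_{n,k})$. This cone is therefore simplicial and Theorem \ref{smoothness}(ii) yields that $\Id_{n,k}$ is tame. The main bookkeeping I anticipate is verifying the telescoping identities at the transition $j=m+1$, where the two definitions of $\vek{f}_j$ must agree; once this is checked, the argument is purely combinatorial.
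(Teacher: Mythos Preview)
Your proof is correct and follows essentially the same strategy as the paper: reduce by $S_n$-symmetry to the single vertex $\vek{v}=\vek{b}$, identify the same $n$ candidate generators $\vek{e}_1$, $\vek{e}_{j}-\vek{e}_{j-1}$ for $2\le j\le m+1$, and $\vek{e}_j-\vek{e}_m$ for $m+2\le j\le n$, and show the ideal tangent cone is simplicial. The only difference is in execution: you compute the full generating set of $IT_{\vek{v}}(\Id_{n,k})$ explicitly via Lemma~\ref{Ltangentenkegel} (using the decomposition $\vek{v}=\sum_S\vek{e}_{\min S}$, which you should state and verify---it follows since exactly $\binom{n-j}{k-1}$ of the $k$-subsets have minimum $j$) and then exhibit explicit $\N$-expansions, whereas the paper argues by contradiction that no two minimal generators can point in the same direction.
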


\begin{proof}
Let $I$ the cloud of $\Id_{n,k}$. By Lemma \ref{L:permecken}, the vertices of
$I$ are all permutations of $\vek{b}=(\binom{n-1}{k-1}, \ldots, 1, 0,
\ldots , 0)$. We first show that the elements ${\bf e}_1$, ${\bf e}_{i+1} - {\bf
e}_i$ for $1\leq i \leq n-k$ and ${\bf e}_{n-k+j} - {\bf e}_{n-k+1}$ for $2\leq
j \leq k$ are all in $\IT{\vek{b}}{\Id_{n,k}}$.

From Lemma \ref{Lunimodular} we know that the minimal generators of
$\IT{\vek{b}}{\Id_{n,k}}$ are of the form $\vek{e}_i$ or $\vek{e}_i -
\vek{e}_j$. There cannot be a vector $\vek{e}_1 - \vek{e}_j$, $j \geq 2$ in
$IT_{\vek{b}}(\Id_{n,k})$ because this generator would stem from a point
$\vek{b} + (\vek{e}_1 - \vek{e}_j)= (\binom{n-1 }{ k-1}+1, \ldots, 0)$ in $I$.
But this contradicts $x_1 \leq b_1$ for all $\vek{x}$ in $I$. But then
$\vek{e}_1$ is a minimal generator of $\IT{\vek{b}}{\Id_{n,k}}$.

For the vectors $\vek{e}_i - \vek{e}_{i-1}, i=2, \ldots, n-k+2$ consider 
the vector $\vek{a}=\vek{b} + \vek{e}_i - \vek{e}_{i-1}$; ${\bf a}$ has the same
coordinates as $\vek{b}$ except for $a_{i-1}=b_{i-1} -1$ and $a_i=b_i+1$. The
vector ${\bf b}$ is a max-vector and $b_1>b_2>b_3>\ldots$. Hence in some factor
of the product defining $\Id_{n,k}$ there exist $x_i$ and $x_{i-1}$ and we have
chosen $x_{i-1}$ in this factor. Choosing $x_{i}$ instead of $x_{i-1}$ we see
${\bf a}\in I$. Hence  $\vek{e}_i - \vek{e}_{i-1}$, $i=2, \ldots, n-k+2$ are in
$\IT{\vek{b}}{\Id_{n,k}}$. Practically the same argument applies to conclude
that the vectors  $\vek{e}_{n-k+j} - \vek{e}_{n-k+1}$ are in
$\IT{\vek{b}}{\Id_{n,k}}$.

From the above we see that all vectors ${\bf e}_i - {\bf e}_j$ with $i>j$ and
$j\leq n-k+1$ are in $IT_{{\bf b}}(\Id_{n,k})$. If ${\bf v}\in IT_{{\bf
b}}(\Id_{n,k})$, then $v_l \geq 0$ for all $l>n-k+1$, since $b_l = 0$ for all
$l>n-k+1$. It follows that if ${\bf e}_i - {\bf e}_j$ and ${\bf e}_i - {\bf
e}_{j'}$, with $j>j'$ are two minimal generators, then $j,j'\leq n-k+1$ and
since $IT_{{\bf b}}(\Id_{n,k})$ is pointed, it follows that $i>j,j'$. But then
${\bf e}_i - {\bf e}_j + {\bf e}_j - {\bf e}_{j'}= {\bf e}_i - {\bf e}_{j'}$
shows that ${\bf e}_i - {\bf e}_{j'}$ is not minimal. Hence $IT_{{\bf
b}}(\Id_{n,k})$ is simplicial.
\end{proof}

\begin{Bem}
The proof for $\Id_{n,2}$ can be done by applying Lemma
  \ref{LpaarweiseVereinigung} to the ideals $\Id_i =(x_i)$; also see
  Example \ref{Bsp.planes.perm}.
\end{Bem}

\bibliographystyle{martijn2.bst}
\bibliography{biblioEF}

\vskip 2cm
Eleonore Faber, Dennis Bouke Westra\\
Fakult\"at f\"ur Mathematik\\
Universit\"at Wien\\
Nordbergstra{\ss}e 15\\
1090 Wien\\
{\tt eleonore.faber@univie.ac.at}, {\tt dennis.bouke.westra@univie.ac.at}

\end{document}